\DeclareMathAlphabet{\pazocal}{OMS}{zplm}{m}{n}
\definecolor{olaj}{RGB}{70,88,104}
\definecolor{zold}{RGB}{123,79,76}
\definecolor{zold}{RGB}{151,151,49}
\definecolor{zold}{RGB}{70,123,148}
\definecolor{midgrey}{RGB}{150,173,180}
\definecolor{grey}{RGB}{232,232,232}
\definecolor{mydarkorange}{RGB}{0,0,0}
\definecolor{gold}{rgb}{0,0,0}
\definecolor{grey}{RGB}{0,0,0}
\definecolor{myorange}{RGB}{0,0,0}
\definecolor{mydarkorange}{RGB}{0,0,0}
\definecolor{mylightblue}{RGB}{0,0,0}
\definecolor{myyellow}{RGB}{0,0,0}
\definecolor{purple}{RGB}{0,0,0}
\definecolor{myblue}{RGB}{0,0,0}
\definecolor{mygreen}{RGB}{0,0,0}
\definecolor{brown}{RGB}{153,88,43}
\newtheorem{theorem}{Theorem}[section]
\newtheorem{lemma}[theorem]{Lemma}
\theoremstyle{definition}
\newtheorem{definition}[theorem]{Definition}
\newtheorem{proposition}[theorem]{Proposition}
\theoremstyle{remark}
\newtheorem{remark}[theorem]{Remark}
\numberwithin{equation}{section}
\newcommand*\cleartoleftpage{%
  \clearpage
  \ifodd\value{page}\hbox{}\newpage\fi
}
\newcommand{\iksz}{^{\scaleto{(\mathbf X)}{4pt}}}
\newcommand{\ipsz}{^{\scaleto{(\mathbf Y)}{4pt}}}
\newcommand{\teiksz}{\mathbin{\cdot^{\scaleto{(\mathbf X)}{4pt}}}}
\newcommand{\teipsz}{\mathbin{\cdot^{\scaleto{(\mathbf Y)}{4pt}}}}
\newcommand{\giksz}        [2] {{#1} \teiksz {#2}}
\newcommand{\gipsz}        [2] {{#1} \teipsz {#2}}
\newcommand{\negaiksz}      [1] {\negaM{{\scaleto{(\mathbf X)}{3,5pt}}}{#1}}
\newcommand{\negaipsz}      [1] {\negaM{{\scaleto{(\mathbf Y)}{3,5pt}}}{#1}}
\newcommand{\iteiksz}        {\ite{}^{\scaleto{(\mathbf X)}{4pt}}}
\newcommand{\iteipsz}        {\ite{}^{\scaleto{(\mathbf Y)}{4pt}}}
\newcommand{\resiksz}               [2] {{#1}\mathbin{\iteiksz}{#2}}
\newcommand{\resipsz}               [2] {{#1}\mathbin{\iteipsz}{#2}}
\newcommand{\komp}      {{^\prime}}
\newcommand{\kompM}[1]{^{{\prime^{\mkern-4mu^{_{#1}}}}}}
\newcommand{\nega}      [1] {{#1}\komp}
\newcommand{\negaM}      [2] {{#2}\kompM{#1}}
\newcommand{\te}{{\mathbin{*\mkern-9mu \circ}}}
\newcommand{\ite}[1]{\mathbin{\rightarrow_{#1}}}
\newcommand{\gdotu}                 [2] {{#1} \mathbin{\cdot^{\scaleto{(\mathbf X)}{4pt}}_u} {#2}}
\newcommand{\gdiamondw}                 [2] {{#1} \mathbin{\cdot^{\scaleto{(\mathbf Y)}{4pt}}_w} {#2}}
\newcommand{\gikszu}          [2] {{#1} \mathbin{\teiksz_u} {#2}}
\newcommand{\gteu}         [2] {{#1} \mathbin{\te_u} {#2}}
\newcommand{\gteM}       [3] {{#2} \mathbin{\te_{#1}} {#3}}
\newcommand{\gteMX}       [3] {{#2} \mathbin{\te_{#1}^{\scaleto{(\mathbf X)}{4pt}}} {#3}}
\newcommand{\gteMY}       [3] {{#2} \mathbin{\te_{#1}^{\scaleto{(\mathbf Y)}{4pt}}} {#3}}
\newcommand{\res}               [3] {{#2}\mathbin{\ite{#1}}{#3}}
\begin{document}
\large

\title{A categorical equivalence for odd or even involutive FL$_e$-chains}



\begin{abstract}
We lift the representation of the class $\mathfrak I^{\mathfrak c}_{\mathfrak 0\mathfrak 1}$ of odd or even involutive FL$_e$-chains by the class $\mathfrak B_{\mathfrak G}$ of bunches of layer groups (a class of direct systems of abelian $o$-groups) published in \cite{JenRepr2020}, to a categorical equivalence between the category $\mathcal I^{\mathfrak c}_{\mathfrak 0\mathfrak 1}$ of odd or even involutive FL$_e$-chains equipped with their normal homomorphisms and the category $\mathcal B_{\mathcal G}$ of bunches of layer groups equipped with bunch homomorphisms, defined here.
Restricting this equivalence only to odd or only to even involutive FL$_e$-chains or to further subclasses thereof (e.g. to Sugihara chains) yields further categorical equivalences.

\end{abstract}

\author{S\'andor Jenei}


\address{
Institute of Mathematics and Informatics, Eszterh\'azi K\'aroly Catholic University, Hungary,
and 
Institute of Mathematics and Informatics, University of P\'ecs, Hungary}

\keywords{Involutive residuated lattices, representation, ordered abelian groups, direct system, categorical equivalence}

\subjclass[2010]{Primary 97H50, 20M30; Secondary 06F05, 06F20, 03B47.}

\date{}

\maketitle


\section{Introduction}

The notion of categorical equivalence plays a fundamental role in category theory.
Proving categorical equivalences is relevant, besides category theory, in several other fields of mathematics, too.
If two categories are equivalent then \lq\lq essentially the same theorems hold\rq\rq\ in the two categories;
an equivalence of categories preserves all \lq\lq categorical\rq\rq\ concepts and properties.
The functor which shows the equivalence of the categories
creates the opportunity to "translate" theorems between different kinds of mathematical structures, such that the essential meaning of those theorems is preserved under the translation.
A categorical equivalence is often used as a \lq\lq bridge theorem\rq\rq: if the objects and morphisms are better understood in one of the categories then problems in the other category can be settled by solving the \lq\lq translated\rq\rq\ version of the problem.
We present a few examples in the realm of non-classical logics, taking into consideration that topology, algebra, categories, and logic are intertwined areas.
In \cite{GR2015}, N.\,Galatos and J.\,G.\,Raftery established a category equivalence between (semilinear) generalized Sugihara monoids and nuclear relative Stone algebras. This result has been generalized by W.\,Chen in \cite{Wei2020} to a category equivalence between semiconic generalized Sugihara monoids and strong nuclear Brouwerian algebras.
A whole lot of fundamental categorical equivalences under the name Stone equivalence are between topological spaces and partially ordered sets.
A well-known categorical equivalence is the $\Gamma$ functor by D.\,Mundici between the category of MV-algebras and the category of abelian $\ell$-groups with strong unit \cite{Gamma}. This result has a precursor by C.\,C.\,Chang in \cite{chang} and
has been generalized by A.\,Dvure\v censkij in \cite{GammaArb} and by
N.\,Galatos and C.\,Tsinakis in \cite{GenMV}.
Several categorical equivalences concerning subcategories of those odd involutive residuated lattices where the unit element has a Boolean complement
have been established in \cite{CMS2008}.
In the present paper we prove a categorical equivalence between all odd or even involutive (pointed) residuated chains and a category of direct systems of abelian $o$-groups.
More precisely, we lift the representation of the class $\mathfrak I^{\mathfrak c}_{\mathfrak 0\mathfrak 1}$ of odd or even involutive FL$_e$-chains by the class $\mathfrak B_{\mathfrak G}$ of bunches of layer groups, published in \cite{JenRepr2020}, to a categorical equivalence between the category $\mathcal I^{\mathfrak c}_{\mathfrak 0\mathfrak 1}$ of odd or even involutive FL$_e$-chains equipped with their normal homomorphisms and the category $\mathcal B_{\mathcal G}$ of bunches of layer groups equipped with bunch homomorphisms, defined here.
A consequence of our results is that a rich segment of weakening-free substructural logics can be investigated by using age-old knowledge and techniques in the field of abelian $o$-groups, see \cite{JenAMALG} for example.

FL$_e$-algebras are algebraic semantics for substructural logics with the exchange property \cite{OnoBook}. In (substructural) fuzzy logics \cite{Metc Mont 2007} the chains of the related variety play a fundamental role, this makes the algebraic investigation of FL$_e$-chains a cutting edge of the research in fuzzy logics. 
An {\em FL$_e$-algebra}\footnote{Other terminologies for FL$_e$-algebras are: pointed commutative residuated lattices or pointed commutative residuated lattice-ordered monoids.} 
is a structure $\mathbf X=( X, \leq, \cdot,\ite{}, t, f )$ such that 
$(X, \leq )$ is a lattice, $( X, \leq,\cdot,t)$ is a commutative monoid, 
$f$ is an arbitrary constant\footnote{Called the {\em falsum} constant.},
and
${x}{y}\leq z$ \footnote{If the multiplication operation is clear from the context, we write ${x}{y}$ for ${x}\cdot{y}$, as usual.} iff $\res{}{x}{z}\geq y$.
The latter condition, called residuation condition, is equivalent to the following: for any $x,z$, the set $\{v\ | \ {x}{v}\leq z\}$ has its greatest element, and $\res{}{x}{z}$, the residuum of $x$ and $z$, is defined as this element: $\res{}{x}{z}:=\max\{v\ | \ {x}{v}\leq z\}$. {\em Commutative residuated lattices} are the $f$-free reducts of FL$_e$-algebras. 
Being residuated implies that $\cdot$ is lattice ordered, that is $\cdot$ distributes over join.
One defines the {\em residual complement operation} by $\nega{x}=\res{}{x}{f}$ and calls an FL$_e$-algebra {\em involutive} if $\nega{(\nega{x})}=x$ holds.
In the involutive case $\res{}{x}{y}=\nega{({x}{\nega{y}})}$ holds. 
Call an element $x\geq t$ {\em positive}. 
An involutive FL$_e$-algebra is called {\em odd} if the residual complement operation leaves the unit element fixed, that is, $\nega{t}=t$, and {\em even} if the following (two) quasi-identities hold: $x<t$ $\Leftrightarrow$ $x\leq f$. 
The former condition is equivalent to $f=t$, while 
the latter quasi-identities are equivalent to assuming that 
$f$ 
is the lower cover of 
$t$ (and $t$ 
is the upper cover of 
$f$)
if chains are considered, that is, when the order is total. 

An original decomposition method along with the related construction method has been introduced in \cite{JenRepr2020} for the class of odd or even involutive FL$_e$-chains.
The main idea was to partition the algebra with the help of its local unit function $x\mapsto\res{}{x}{x}$ 
into a direct system of (hopefully simpler, \lq\lq nicer\rq\rq) algebras, indexed by the positive idempotent elements of the original algebra, with transitions of the direct system defined by multiplication with a positive idempotent element, 
and to rebuild the algebra from the direct system using a construction which partly coincides with the construction of P\l{}onka sums.
It is called layer algebra decomposition.
This idea has successfully been applied recently for other classes of residuated lattices including 
finite commutative idempotent involutive residuated lattices in \cite{JTV2021},
locally integral involutive po-monoids and semirings in \cite{GFoth},
and very recently for Bochvar algebras in \cite{BonBal}.
In these classes the layer algebras are \lq\lq nice\rq\rq.
In \cite{JenRepr2020}, however, the layer algebras are only somewhat nicer than the original algebra, therefore a second step, a layer algebras to layer groups construction (and reconstruction) phase had to be included.
The layer algebra decomposition together with this second phase shall be presented as a single step in Theorem~\ref{KATEGOR_BUNCH_X} below.
The main result of the present paper is to lift the correspondence between odd or even involutive FL$_e$-chains and bunches of layer groups, described in Theorem~\ref{KATEGOR_BUNCH_X}, to a categorical equivalence between the category 
$\mathcal I^{\mathfrak c}_{\mathfrak 0\mathfrak 1}$
of odd or even involutive FL$_e$-chains with FL$_e$-algebra homomorphisms
and 
the category 
$\mathcal B_\mathcal G$
of bunches of layer groups with bunch homomorphisms.
Restricting the related functor to subcategories yields further categorical equivalences.




\section{A bijective correspondence between $\mathfrak I^\mathfrak c_{\mathfrak 0\mathfrak1}$ and $\mathfrak B_{\mathfrak G}$}

For a partially ordered set $\mathbf X=(X, \leq)$ and for $x\in X$ define the upper neighbor $x_\uparrow$ of $x$ to be the unique cover of $x$ if such exists, and $x$ otherwise.
Define $x_\downarrow$ dually.
A partially ordered algebra with a poset reduct will be called {\em discretely ordered} if for any element $x$, $x_\downarrow<x<x_\uparrow$ holds.
A directed partially ordered set is a partially ordered set such that every pair of elements has an upper bound.

\begin{definition}
Let $\langle \kappa,\leq \rangle$ be a directed partially ordered set.
Let $\{\mathbf A_i\in\mathfrak U : i\in\kappa\}$ be a family of algebras of the same type and $\varsigma_{i\to j}$ be a homomorphism\footnote{Homomorphisms are understood according to the corresponding setting. We shall call them the transitions of the direct system.} for every $i,j\in\kappa$, $i\leq j$ with the following properties:
\begin{enumerate}[(D1)]
\item\label{IDes}
$\varsigma_{i\to i}$ is the identity of $\mathbf A_i$, and
\item
$\varsigma_{i\to k}=\varsigma_{j\to k}\circ \varsigma_{i\to j}$ for all $i\leq j\leq k$.
\end{enumerate}
\noindent
Then $\langle \mathbf A_i,\varsigma_{i\to j} \rangle$ is called a direct system of algebras in $\mathfrak U$ over $\kappa$. 
\end{definition}
\begin{definition}\label{homodirsyst} 
Let $\pazocal A=\langle \mathbf A_i,f_{i\to j} \rangle_{\langle\alpha,\leq_\alpha\rangle}$ 
and 
$\pazocal B=\langle \mathbf B_i,g_{i\to j} \rangle_{\langle\beta,\leq_\beta\rangle}$
be two direct systems from the same class $\mathfrak U$ of algebraic systems. 
By a (direct system) homomorphism $\Phi:\pazocal A\to\pazocal B$ 
we mean
a system of $\mathfrak U$-homomorphisms $\Phi=\{\Phi_i:A_i\to B_{\iota_o(i)} \ | \  i\in\alpha\}$
such that $\iota_o:\alpha\to\beta$ is an $o$-embedding 
and
for every $i,j\in\alpha$, $i\leq j$ the diagram in Fig.\,\ref{HomoM}
\begin{figure}[ht]
\begin{diagram}
\textbf{\textit{A$_i$}} & \rTo_{\Phi_i} & \textbf{\textit{B$_{\iota_o(i)}$}} \\
\dTo^{f_{i\to j}} & & \dTo_{g_{{\iota_o(i)}\to{\iota_o(j)}}} \\
\textbf{\textit{A$_j$}} & \rTo_{\Phi_j} & \textbf{\textit{B$_{\iota_o(j)}$}} \\
\end{diagram}
\caption{}
\label{HomoM}
\end{figure}
commutes.
\end{definition}
As said in the introduction, every odd or even involutive FL$_e$-chain will be represented by a bunch of layer groups in Theorem~\ref{KATEGOR_BUNCH_X}.
To this end we need the following definition.
\begin{definition}\label{KATEGOR_DEFbunch}
\cite[Definition~7.1]{JenRepr2020}
A bunch of layer groups
$${\pazocal G}=\langle \textbf{\textit{G$_u$}},\textbf{\textit{H$_u$}}, \varsigma_{u\to v} \rangle_{\langle \kappa_o, \kappa_J, \kappa_I, \leq_\kappa\rangle}$$
is a direct system 
$\langle \textbf{\textit{G$_u$}},\varsigma_{u\to v}\rangle_{\langle\kappa, \leq_\kappa\rangle}$ of abelian $o$-groups (totally ordered abelian groups)
over the totally ordered set
$\kappa=
\kappa_o\,\cup\,\kappa_J\,\cup\,\kappa_I$
($\kappa_o$, $\kappa_J$, $\kappa_I$ are pairwise disjoint)
with\begin{equation}\label{tLEAST}
\mbox{
least element $t$, 
}
\end{equation}
such that
\begin{enumerate}[start=1,label={(G\arabic*)}]
\item 
$\kappa_o\subseteq\{t\}$,
\item\label{KATEGOR_G2}
for $v\in\kappa_I$, \textbf{\textit{H$_v$}}$\leq$\textbf{\textit{G$_v$}}
\footnote{$\textbf{\textit{H$_u$}}$'s are indexed by $\kappa_I$ only.}
and for
$u<_\kappa v\in\kappa_I$,
$\varsigma_{u\to v}$ maps into $H_v$,
\item \label{KATEGOR_DiSCRetE}
for $u\in\kappa_J$, \textbf{\textit{G$_u$}} is discrete
and for $v>_\kappa u\in\kappa_J$,
$\varsigma_{u\to v}(u)=\varsigma_{u\to v}(u_{\downarrow_u})$.
\end{enumerate}
In \ref{KATEGOR_DiSCRetE}, ${ }_{\downarrow_u}$ denotes the neighborhood operation in $\textbf{\textit{G$_u$}}$. 
Call the \textbf{\textit{G$_u$}}'s and the \textbf{\textit{H$_u$}}'s the layer groups and layer subgroups of $\pazocal G$, respectively, call $\langle\kappa,\leq_\kappa\rangle$ the {\em skeleton} of $\pazocal G$, call $\langle \kappa_o, \kappa_J, \kappa_I \rangle$ the {\em partition}\footnote{Here we (ill-)use the noun partition for the relaxed version of the familiar notion of partition, where the involved subsets are not necessarily nonempty.} of the skeleton, and call $\langle \textbf{\textit{G$_u$}}, \varsigma_{u\to v} \rangle_{\langle \kappa, \leq_\kappa\rangle}$ the direct system of $\pazocal G$.
\end{definition}
In our subsequent discussion 
we shall often refer to the type of $\varsigma_{u\to v}$
\begin{equation}\label{tipusSIGMA}
\varsigma_{u\to v} : G_u\to G_v
\end{equation}
and there will be a few classes of algebras and categories which will play a significant role. These are given distinguished notation as listed below.
$$
\begin{array}{ll}
\mathfrak I^{\mathfrak c}_{\mathfrak 0\mathfrak 1},	\mathcal I^{\mathfrak c}_{\mathfrak 0\mathfrak 1} 	& \mbox{the class of odd or even involutive FL$_e$-chains and its category}\\
& \mbox{equipped with FL$_e$-algebra homomorphisms},\\
\mathfrak B_{\mathfrak G},\mathcal B_{\mathcal G}	& \mbox{the class of bunches of layer groups and its category}\\
& \mbox{equipped with bunch homomorphisms, see Definition~\ref{DEFbunchHom}}
\end{array}
$$
Theorem~\ref{KATEGOR_BUNCH_X} demonstrates a bridge, in a constructive manner, between the classes $\mathfrak I^{\mathfrak c}_{\mathfrak 0\mathfrak1}$ and $\mathfrak B_{\mathfrak G}$.
Because of this, if $\mathcal X$ denotes the bunch constructed from the  odd or even involutive FL$_e$-algebra $\mathbf X$  then we also say that its \textbf{\textit{G$_u$}}'s and the \textbf{\textit{H$_u$}}'s are the layer groups and layer subgroups of $\mathbf X$, we call $\langle\kappa,\leq_\kappa\rangle$ the {\em skeleton} of $\mathbf X$, $\langle \kappa_o, \kappa_J, \kappa_I \rangle$ the {\em partition} of the skeleton, and call $\langle \textbf{\textit{G$_u$}}, \varsigma_{u\to v} \rangle_\kappa$ the direct system of $\mathbf X$.




\smallskip
We shall need the following lemma.
\begin{lemma}
For any bunch of layer groups
$\langle \textbf{\textit{G$_u$}},\textbf{\textit{H$_u$}}, \varsigma_{u\to v} \rangle_{\langle \kappa_o, \kappa_J, \kappa_I, \leq_\kappa\rangle}$
with
$\textbf{\textit{G$_u$}} = (G_u,\preceq_u,\cdot_u,\ { }^{-1_u},u)$, if $u\in\kappa_J$ then
\begin{equation}\label{87JHxdhFldD}
x
\cdot_u
u_{\downarrow_u}
=
x_{\downarrow_u}
,
\end{equation}
\end{lemma}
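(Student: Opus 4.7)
The plan is to exploit the fact that, in any totally ordered group, left multiplication by a fixed element is an order automorphism, so it must send covers to covers. The hypothesis $u\in\kappa_J$ guarantees (via condition \ref{KATEGOR_DiSCRetE}) that $\textbf{\textit{G$_u$}}$ is discretely ordered, so $u_{\downarrow_u}$ is the genuine immediate predecessor of the group identity $u$, not $u$ itself; this is what makes the identity nontrivial.

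First I would record the trivial inequality $u_{\downarrow_u}\prec_u u$, which follows from discreteness. Multiplying both sides by $x$ on the left using the order-compatibility of $\cdot_u$ yields $x\cdot_u u_{\downarrow_u}\prec_u x\cdot_u u=x$. Next I would check that nothing fits strictly between $x\cdot_u u_{\downarrow_u}$ and $x$: suppose $y\in G_u$ satisfies $x\cdot_u u_{\downarrow_u}\prec_u y\prec_u x$. Multiplying through by $x^{-1_u}$ (again, order is preserved in an $o$-group) gives $u_{\downarrow_u}\prec_u y\cdot_u x^{-1_u}\prec_u u$, contradicting the fact that $u_{\downarrow_u}$ is the immediate predecessor of $u$ in the discretely ordered group $\textbf{\textit{G$_u$}}$. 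Therefore $x\cdot_u u_{\downarrow_u}$ is the immediate predecessor of $x$, i.e.\ $x\cdot_u u_{\downarrow_u}=x_{\downarrow_u}$.

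There is no serious obstacle here; the only mild subtlety is to make sure one invokes the correct clause of Definition~\ref{KATEGOR_DEFbunch}: it is clause \ref{KATEGOR_DiSCRetE}, applied to $u\in\kappa_J$, that supplies the discreteness of $\textbf{\textit{G$_u$}}$ and hence the existence (and strict inequality) of $u_{\downarrow_u}$. The remainder is the standard argument that in a totally ordered group, translation is an order isomorphism and therefore preserves the cover relation.
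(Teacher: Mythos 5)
Your argument is correct and is essentially the same as the paper's own proof: both obtain $x\cdot_u u_{\downarrow_u}\prec_u x$ by translating the cover relation $u_{\downarrow_u}\prec_u u$ by $x$, and then rule out any intermediate element by translating back with $x^{-1_u}$, invoking \ref{KATEGOR_DiSCRetE} for the discreteness of $\textbf{\textit{G$_u$}}$ when $u\in\kappa_J$. The only cosmetic difference is that the paper phrases the first step via cancellativity of $\cdot_u$ rather than via order-compatibility, which in an abelian $o$-group amounts to the same fact that translation is an order automorphism.
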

\begin{proof}
Since $u\in\kappa_J$, 
the lower cover $x_{\downarrow_u}$ of $x$ exists 
in $G_u$
for any $x\in G_u$,
see \ref{KATEGOR_DiSCRetE}.
Since $\cdot_u$
is cancellative, 
$
x
\cdot_u
u_{\downarrow_u}
\prec_u
x
\cdot_u
u
=
x
$
holds on the one hand, while on the other, 
$
x
\cdot_u
u_{\downarrow_u}
\prec_u
y
\prec_u
x
$
for some $y\in G_u$
would lead, by multiplying with $x^{-1_u}$, to an element in $G_u$ which is strictly in between $u_{\downarrow_u}$ and $u$, a contradiction.
\end{proof}

\begin{theorem}\label{KATEGOR_BUNCH_X}
{\cite[Theorem~8.1]{JenRepr2020}}
\begin{enumerate}[label={(A)}]
\item\label{KATEGOR_errefere}
Given an odd or an even involutive FL$_e$-chain $\mathbf X=(X,\leq,\cdot,\ite{},t,f)$ with residual complement operation $\komp$,
\begin{equation}\label{KATEGOR_IgyNeznekKi}
\pazocal G_{\mathbf X}=\langle \textbf{\textit{G$_u$}},\textbf{\textit{H$_u$}}, \varsigma_{u\to v} \rangle_{\langle \kappa_o, \kappa_J, \kappa_I,\leq_\kappa\rangle}
\ \  
{\mbox{with}}
\ \
\textbf{\textit{G$_u$}} = (G_u,\preceq_u,\cdot_u,\ { }^{-1_u},u)
\ \ \ 
(u\in\kappa)
\end{equation}
is bunch of layer groups, called the {\em bunch of layer groups of $\mathbf X$},
where
\begin{equation}\label{KATEGOR_IGYleszSKELETON}
\kappa=\{\res{}{x}{x} : x\in X\}=
\{u\geq t : u \mbox{ is idempotent} \} 
\mbox{ is ordered by $\leq$,}
\end{equation}
\begin{equation}\label{KATEGOR_kappaIJ}
\begin{array}{lll}
\bar\kappa_I
&=&
\{u\in \kappa\setminus\{t\} : \nega{u} \mbox{ is idempotent}\},\\
\bar\kappa_J&= & 
\{u\in \kappa\setminus\{t\} : \nega{u} \mbox{ is not idempotent}\},
\\
\end{array}
\end{equation}
$\kappa_o$, $\kappa_J$, $\kappa_I$ are defined by Table~\ref{KATEGOR_ThetaPsiOmegaAGAIN},
\begin{table}[h]
\begin{center}
\begin{tabular}{l|l|l|lll}
$\kappa_o$ \ \ \ \ \ \ \ \ & $\kappa_J$ & $\kappa_I$ & \\
\hline
\{t\} & $\bar\kappa_J$ & $\bar\kappa_I$ & if $\mathbf X$ is odd\\
\hline
$\emptyset$ & $\bar\kappa_J\cup\{t\}$ & $\bar\kappa_I$ & if $\mathbf X$ is even and $f$ is not idempotent \\
\hline
$\emptyset$ & $\bar\kappa_J$ & $\bar\kappa_I\cup\{t\}$ & if $\mathbf X$ is even and $f$ is idempotent\\
\hline
\end{tabular}
\caption{}
\label{KATEGOR_ThetaPsiOmegaAGAIN}
\end{center}
\end{table}
\\
for $u\in\kappa$,
\begin{equation}\label{KATEGOR_XHiGYkESZUL}
\begin{array}{llll}
L_u&=&\{x\in X : \res{}{x}{x}=u\},\\
H_u&=&\{x\in L_u : {x}{\nega{u}}<x\}=\{x\in L_u : {\mbox{$x$ is $u$-invertible}} \},\footnotemark\\
\accentset{\bullet}H_u&=&\{ \accentset{\bullet}x : x\in H_u\}
\mbox{
where $\accentset{\bullet}x={x}{\nega{u}}$,
}
\end{array}
\end{equation}
\footnotetext{Here and in the next row, $u\in\kappa_I$. We say that $x\in L_u$ is $u$-invertible if there is $y\in L_u$ such that $xy=u$.}
\begin{equation}\label{KATEGOR_DEFcsopi}
G_u=\left\{
\begin{array}{ll}
L_u & \mbox{if $u\notin\kappa_I$}\\
L_u\setminus\accentset{\bullet}H_u & \mbox{if $u\in\kappa_I$}\\
\end{array}
\right.
,
\end{equation}
$$
\preceq_u\ =\ \leq\,\cap\ (G_u\times G_u)
$$
 \begin{equation}\label{KATEGOR_IgyTorzulaSzorzat}
x\cdot_u y=
\left\{
\begin{array}{ll}
xy & \mbox{if $u\notin\kappa_I$}\\
\res{}{(\res{}{{x}{y}}{u})}{u} & \mbox{if $u\in\kappa_I$}\\
\end{array}
\right.
,
\end{equation}
for $x\in G_u$,
\begin{equation}\label{KATEGOR_EzLeSzainVerZ}
x^{-1_u}=\res{}{x}{u},
\end{equation} 
and for $u,v\in\kappa$ such that $u\leq_\kappa v$, $\varsigma_{u\to v} : G_u\to G_v$ is defined by
\begin{equation}\label{KATEGOR_MapAzSzorzas}
\varsigma_{u\to v}(x)={v}{x}
.
\end{equation}
\end{enumerate}

\bigskip
\begin{enumerate}[label={(B)}]
\item\label{KATEGOR_arrafere}
Given a bunch of layer groups
$$
\mbox{
$
\pazocal G=\langle \textbf{\textit{G$_u$}},\textbf{\textit{H$_u$}}, \varsigma_{u\to v} \rangle_{{\langle \kappa_o, \kappa_J, \kappa_I, \leq_\kappa\rangle}}
$
with
$
\textbf{\textit{G$_u$}}=(G_u,\preceq_u,\cdot_u,\ { }^{-1_u},u)
\ \ 
(u\in\kappa)
$,
}
$$
$$
\mathbf X_{\pazocal G}=(X,\leq,\cdot,\ite{},t,\nega{t})
$$
is an involutive FL$_e$-chain with residual complement $\komp$,
called the {\em involutive FL$_e$-chain of $\pazocal X$}
with
\begin{equation}\label{KATEGOR_EZazX}
X=\displaystyle\dot\bigcup_{u\in \kappa}L_u
,
\end{equation}
\indent where (according to Definition~\ref{KATEGOR_DEFbunch})
$$
\kappa=\kappa_o\cup\kappa_J\cup\kappa_I,
$$
\indent
for $u\in\kappa_I$,
\begin{equation}\label{KATEGOR_Hukeszul}
\mbox{
$\accentset{\bullet}{H}_u=\{\accentset{\bullet}{x} : x\in H_u\}$ is a copy of $H_u$
}
\end{equation}
\indent
and for $u\in\kappa$,
\begin{equation}\label{KATEGOR_IkszU}
L_u=\left\{
\begin{array}{ll}
G_u & \mbox{ if $u\not\in\kappa_I$}\\
G_u\,
\overset{.}{\cup}\, \accentset{\bullet}H_u & \mbox{ if $u\in\kappa_I$}\\
\end{array}
\right. ,
\end{equation}

\medskip
\noindent
for $u,v\in\kappa$, $x\in L_u$ and $y\in L_v$,
\begin{equation}\label{KATEGOR_RendeZesINNOVATIVAN}
\mbox{$x<y$ iff \footnotemark \ $\rho_{uv}(x)<_{uv}\rho_{uv}(y)$ or 
($\rho_{uv}(x)=\rho_{uv}(y)$ and $u<_\kappa v$),}\\
\end{equation}\footnotetext{\ Here and also in (\ref{KATEGOR_EgySzeruTe}) $uv$ stands for $\max_{\leq_\kappa}(u,v)$. We remark that this notation does not cause any inconsistency with the notation of Theorem~\ref{KATEGOR_BUNCH_X}/\ref{KATEGOR_errefere} 
since for any two positive idempotent elements $u,v$ of an odd or even involutive FL$_e$-chain $(X,\leq,\cdot,\ite{},t,f)$ it holds true that $uv=\max_{\leq}(u,v)$ which is further equal to $\max_{\leq_\kappa}(u,v)$ by (\ref{KATEGOR_IGYleszSKELETON}).}
\indent
where
for $u\in\kappa$, $\gamma_u : L_u \to G_u$ is defined by
\begin{equation}\label{DEFgamma}
\gamma_u(x)=
\left\{
\begin{array}{ll}
x & \mbox{ if $x\in G_u$,}\\
a & \mbox{ if $x=\accentset{\bullet}a\in \accentset{\bullet}H_u$ (for $u\in\kappa_I$),}\\
\end{array}
\right.
\end{equation}

for $v\in\kappa$, $\rho_v : X\to X$ is defined by
\begin{equation}\label{KATEGOR_P5}
\rho_v(x)
=
\left\{
\begin{array}{ll}
x & \mbox{ if $x\in L_u$ and $u\geq_\kappa v$}\\
\varsigma_{u\to v}(\gamma_u(x)) & \mbox{ if $x\in L_u$ and $u<_\kappa v$}\\
\end{array}
\right. 
\end{equation}

and the ordering $<_u$ of $L_u$ is given by
\begin{equation}\label{KATEGOR_KibovitettRendezesITTIS}
\begin{array}{ll}
\mbox{ $\leq_u\,=\,\preceq_u$ if $u\notin\kappa_I$, whereas 
if $u\in\kappa_I$ then 
$\leq_u$ extends $\preceq_u$ 
by letting 
}\\
\mbox{
$\accentset{\bullet} a<_u\accentset{\bullet} b$ and $x<_u\accentset{\bullet} a<_uy$
for $a,b\in H_u$, $x,y\in G_u$ with $a\prec_u b$, $x\prec_u a\preceq_u y$,
}
\end{array}
\end{equation}

\medskip
\noindent
for $u,v\in\kappa$, $x\in L_u$ and $y\in L_v$,
\begin{equation}\label{KATEGOR_EgySzeruTe}
{x}{y}=\gteM{uv}{\rho_{uv}(x)}{\rho_{uv}(y)},
\end{equation}

where the
multiplication $\te_u$ on $L_u$ is defined,
for $x,y\in L_u$, by
\begin{equation}\label{KATEGOR_uPRODigy}
\gteu{x}{y}=
\left\{
\begin{array}{ll}
\left({\gamma_u(x)}\cdot_u{\gamma_u(y)}\right)^\bullet	
& \mbox{ if 
$u\in\kappa_I$, 
${\gamma_u(x)}\cdot_u{\gamma_u(y)}\in H_u$,
$\neg(x,y\in H_u)$
}\\
{\gamma_u(x)}\cdot_u{\gamma_u(y)}		& \mbox{ if $u\in\kappa_I$, ${\gamma_u(x)}\cdot_u{\gamma_u(y)}\notin H_u$ or $x,y\in H_u$}\\
x\cdot_u y& \mbox{ if $u\notin\kappa_I$}\\
\end{array}
\right. ,
\end{equation}

\medskip\noindent
for $x,y\in X$,
\begin{equation}\label{KATEGOR_IgYaReSi}
\res{}{x}{y}=\nega{({x}{\nega{y}})},
\end{equation}

where
for $x\in X$ the residual complement $\komp$ is defined by
\begin{equation}\label{KATEGOR_SplitNega}
\nega{x}
=
\left\{
\begin{array}{ll}
a^{-1_u}		& \mbox{ if $u\in\kappa_I$ and $x=\accentset{\bullet}a\in \accentset{\bullet}H_u$}\\
\left(x^{-1_u}\right)^\bullet	& \mbox{ if $u\in\kappa_I$ and $x\in H_u$}\\
x^{-1_u}	& \mbox{ if $u\in\kappa_I$ and $x\in G_u\setminus H_u$}\\
{x^{-1_u}}_{\downarrow_u} & \mbox{ if $u\in\kappa_J$ and $x\in G_u$}\\
x^{-1_u}		& \mbox{ if $u\in\kappa_o$ and $x\in G_u$}\\
\end{array}
\right. ,
\end{equation}
\begin{equation}\label{KATEGOR_tLESZez}
\mbox{
$t$ is the least element of $\kappa$,
}
\end{equation}
\begin{equation}\label{KATEGOR_tLESZaz}
\mbox{
$f$ is the residual complement of $t$.
}
\end{equation}
In addition, 
$\mathbf X_{\pazocal G}$ is odd if $t\in\kappa_o$, 
even with a non-idempotent falsum if $t\in\kappa_J$, and 
even with an idempotent falsum if $t\in\kappa_I$.

\end{enumerate}

\begin{enumerate}[start=1,label={(C)}]
\item
Given a bunch of layer groups $\pazocal G$ it holds true that 
$\pazocal G_{({\mathbf X}_\pazocal G)}=\pazocal G$, and
given an odd or even involutive FL$_e$-chain $\mathbf X$ it holds true that $\mathbf X_{(\pazocal G_\mathbf X)}\simeq\mathbf X$.
\qed
\end{enumerate}
\end{theorem}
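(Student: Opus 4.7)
The plan is to prove parts \ref{KATEGOR_errefere}, \ref{KATEGOR_arrafere}, and (C) of Theorem~\ref{KATEGOR_BUNCH_X} in that order, leveraging the layer algebra decomposition developed in the cited paper.

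For part \ref{KATEGOR_errefere}, starting from an odd or even involutive FL$_e$-chain $\mathbf X$, I first verify that the skeleton $\kappa$ of (\ref{KATEGOR_IGYleszSKELETON}) coincides with the set of positive idempotents and, by the chain hypothesis together with residuation, is linearly ordered with least element $t$. Next, for each $u\in\kappa$ I show that the fibre $L_u=\{x:\res{}{x}{x}=u\}$ is non-empty and that $\{L_u\}_{u\in\kappa}$ partitions $X$. The core group-theoretic content lies in showing that, with the operations of (\ref{KATEGOR_IgyTorzulaSzorzat}) and (\ref{KATEGOR_EzLeSzainVerZ}), the carrier $G_u$ of (\ref{KATEGOR_DEFcsopi}) is an abelian totally ordered group with unit $u$. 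When $u\notin\kappa_I$ the identity $\res{}{x}{x}=u$ furnishes a cancellation witness via involutivity, so $(L_u,\cdot)$ is already a subgroup. When $u\in\kappa_I$ the obstruction is that $\accentset{\bullet}H_u$ breaks cancellation, but excising these elements and replacing $xy$ with $\res{}{(\res{}{{x}{y}}{u})}{u}$ normalises back into $G_u$; associativity and commutativity then descend from $\mathbf X$. The transitions $\varsigma_{u\to v}(x)=vx$ are group homomorphisms because $v$ is idempotent and $vu=v$ whenever $u\le_\kappa v$, and the direct-system axioms follow from $u\cdot u=u$ and associativity. The conditions (G1)--(G3) are then read off Table~\ref{KATEGOR_ThetaPsiOmegaAGAIN}: (G2) uses that multiplication by a $v$ whose residual complement is also idempotent forces a $v$-invertible image in $L_v$, while (G3) uses the preceding lemma together with the characterisation of $\bar\kappa_J$ in (\ref{KATEGOR_kappaIJ}) to locate the lower cover inside $G_u$.

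For part \ref{KATEGOR_arrafere}, I would argue in the converse direction: given the bunch $\pazocal G$, set $X=\dot\bigcup_u L_u$ as in (\ref{KATEGOR_EZazX})--(\ref{KATEGOR_IkszU}) and check, in order, that the binary relation in (\ref{KATEGOR_RendeZesINNOVATIVAN}) is a linear order (the tie-breaking rule $u<_\kappa v$ when $\rho_{uv}(x)=\rho_{uv}(y)$ is consistent because $\kappa$ is totally ordered and each $\rho_v$ is monotone and idempotent); that the multiplication of (\ref{KATEGOR_EgySzeruTe}) is well-defined, commutative and associative (associativity is obtained by reflecting three arguments to $\max_{\leq_\kappa}(u,v,w)$ via the direct-system axiom); that $t$ is a two-sided unit (using (\ref{tLEAST})); that the map in (\ref{KATEGOR_SplitNega}) is an involution on each layer (case analysis over $\kappa_o,\kappa_J,\kappa_I$ using group inversion and the neighbour shift (\ref{87JHxdhFldD})); and finally that (\ref{KATEGOR_IgYaReSi}) indeed provides a residuum, which reduces to monotonicity of multiplication together with the fact that the involution reverses order. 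The closing claim on oddness/evenness then falls out from where $t$ lives in the partition, since $\nega{t}=t$ exactly when $t\in\kappa_o$ and $\nega{t}$ is the lower cover of $t$ exactly when $t\in\kappa_J$.

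For part (C) the two round-trips reduce to direct inspection. Starting from $\pazocal G$ and re-extracting the skeleton and layer groups from $\mathbf X_\pazocal G$ via (\ref{KATEGOR_IGYleszSKELETON})--(\ref{KATEGOR_EzLeSzainVerZ}) returns exactly the original $\kappa$, $G_u$, and $\varsigma_{u\to v}$, because the local unit function on $\mathbf X_\pazocal G$ sends an element of $L_u$ to $u$ and the reconstructed products invert the assembly formulae (\ref{KATEGOR_uPRODigy})--(\ref{KATEGOR_SplitNega}) term by term. In the other direction the identity map on $X$, viewed as the tautological bijection between the original carrier and the disjoint union of its fibres, carries $\mathbf X$ isomorphically onto $\mathbf X_{(\pazocal G_\mathbf X)}$, since both the multiplication and the involution are defined on each fibre by the very formulae extracted in part~\ref{KATEGOR_errefere}.

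The main obstacle is part \ref{KATEGOR_arrafere}, and within it the verification of associativity of the global multiplication together with the residuation identity across layers where the arguments straddle $\kappa_I$, $\kappa_J$ and $\kappa_o$: these require combining the direct-system axiom with the neighbour-shift identity (\ref{87JHxdhFldD}), a five-case analysis for (\ref{KATEGOR_SplitNega}), and the delicate behaviour of (\ref{KATEGOR_uPRODigy}) that distinguishes products landing in $H_u$ from those hitting $\accentset{\bullet}H_u$. These case splits are exactly the technical burden that the layer-groups phase added in \cite{JenRepr2020} is designed to bear.
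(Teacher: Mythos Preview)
The paper does not prove this theorem: it is quoted verbatim as \cite[Theorem~8.1]{JenRepr2020} and closed with a \qed\ symbol immediately after part~(C), with no argument supplied. So there is no proof in the present paper to compare your proposal against.

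Your sketch is a reasonable high-level outline of what the proof in \cite{JenRepr2020} must accomplish---the layer partition via the local-unit function, the verification that each $G_u$ is an abelian $o$-group (with the $\kappa_I$ case needing the normalisation $\res{}{(\res{}{xy}{u})}{u}$), the direct-system axioms for the transitions $x\mapsto vx$, and the reverse assembly with its five-way case split for the involution---and you correctly identify associativity of the global product and residuation across mixed layers as the technical burden. But this remains an outline: steps like ``associativity is obtained by reflecting three arguments to $\max_{\leq_\kappa}(u,v,w)$'' or ``residuation reduces to monotonicity together with order-reversal of the involution'' name the mechanism without carrying it out, and the actual case analysis in \cite{JenRepr2020} is substantial. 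Whether your route matches that paper's in detail cannot be assessed from the present text.
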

\begin{remark}\label{ModifiCaTO}
If Definition~\ref{KATEGOR_DEFbunch} is slightly modified in such a way that the
$\accentset{\bullet}{\textbf{\textit{H$_u$}}}$'s
are \lq\lq stored\rq\rq\ in the definition of a bunch
(like ${\pazocal G}=\langle \textbf{\textit{G$_u$}},\textbf{\textit{H$_u$}},\accentset{\bullet}{\textbf{\textit{H$_u$}}}, \varsigma_{u\to v} \rangle_{\langle \kappa_o, \kappa_J, \kappa_I, \leq_\kappa\rangle}$),
and 
instead of taking a copy $\accentset{\bullet}{H}_u$ of $H_u$ in (\ref{KATEGOR_Hukeszul}), that stored copy is used in the construction of Theorem~\ref{KATEGOR_BUNCH_X}/\ref{KATEGOR_errefere},
then also
$\mathbf X_{(\pazocal G_\mathbf X)}=\mathbf X$
holds.
Then, Theorem~\ref{KATEGOR_BUNCH_X} describes a bijection, in a constructive manner, between the classes $\mathfrak I^{\mathfrak c}_{\mathfrak 0\mathfrak1}$ and $\mathfrak B_{\mathfrak G}$.
Stating only the isomorphism will be sufficient for our purposes in the present paper.
See also Remark~\ref{ISOis}.
\end{remark}

\section{Categorical equivalence between $\mathcal I^{\mathfrak c}_{\mathfrak 0\mathfrak 1}$ and $\mathcal B_\mathcal G$}

We start with a few lemmata to ease the subsequent computation.

\begin{lemma}
For $x\in X$,
(\ref{KATEGOR_SplitNega}) can equivalently be written as
\begin{equation}\label{KATEGOR_SplitNegaSIMP}
\nega{x}
=
\left\{
\begin{array}{ll}
\left(\gamma_u(x)^{-1_u}\right)^\bullet	& \mbox{ if $u\in\kappa_I$ and $x\in H_u$}\\
{\gamma_u(x)^{-1_u}}_{\downarrow_u} & \mbox{ if $u\in\kappa_J$ and $x\in G_u$}\\
\gamma_u(x)^{-1_u}		& \mbox{ otherwise}\\
\end{array}
\right.
.
\end{equation}
For $x,y\in X$,
(\ref{KATEGOR_uPRODigy}) can equivalently be written as
\begin{equation}\label{KATEGOR_uPRODigySHORT}
\gteu{x}{y}=
\left\{
\begin{array}{ll}
\left({\gamma_u(x)}\cdot_u{\gamma_u(y)}\right)^\bullet	
& \mbox{ if 
$u\in\kappa_I$, 
${\gamma_u(x)}\cdot_u{\gamma_u(y)}\in H_u$,
$\neg(x,y\in H_u)$
}\\
{\gamma_u(x)}\cdot_u{\gamma_u(y)}		& \mbox{ otherwise}\\
\end{array}
\right.
.
\end{equation}
\end{lemma}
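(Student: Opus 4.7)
The plan is to handle the two claimed rewritings separately, each by a straightforward unfolding of the definition of $\gamma_u$ from (\ref{DEFgamma}) and of $L_u$ from (\ref{KATEGOR_IkszU}), matching cases between the expanded and compact forms.

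For the equivalence of (\ref{KATEGOR_SplitNega}) and (\ref{KATEGOR_SplitNegaSIMP}), I would observe that the first two lines of (\ref{KATEGOR_SplitNegaSIMP}) already match the second and fourth lines of (\ref{KATEGOR_SplitNega}): in both of those cases $x\in G_u$, so $\gamma_u(x)=x$ by (\ref{DEFgamma}), and substituting gives exactly the original. The bulk of the verification is then to split the \emph{otherwise} clause of (\ref{KATEGOR_SplitNegaSIMP}) into its three sub-possibilities. By (\ref{KATEGOR_IkszU}), the domain $L_u$ to which $x$ belongs decomposes as $G_u$ when $u\notin\kappa_I$ and as $G_u\,\dot\cup\,\accentset{\bullet}H_u$ when $u\in\kappa_I$; the remaining cases once the two explicit ones are removed are precisely $u\in\kappa_I$ with $x\in\accentset{\bullet}H_u$, $u\in\kappa_I$ with $x\in G_u\setminus H_u$, and $u\in\kappa_o$ with $x\in G_u$. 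In the first of these, $x=\accentset{\bullet}a$ so $\gamma_u(x)=a$ and (\ref{KATEGOR_SplitNegaSIMP}) yields $a^{-1_u}$, matching the first line of (\ref{KATEGOR_SplitNega}); in the remaining two, $x\in G_u$ so $\gamma_u(x)=x$ and (\ref{KATEGOR_SplitNegaSIMP}) yields $x^{-1_u}$, matching the third and fifth lines of (\ref{KATEGOR_SplitNega}).

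For the equivalence of (\ref{KATEGOR_uPRODigy}) and (\ref{KATEGOR_uPRODigySHORT}), the first line is literally identical in the two formulations. The task is to absorb the second and third lines of (\ref{KATEGOR_uPRODigy}) into the single \emph{otherwise} clause of (\ref{KATEGOR_uPRODigySHORT}). When $u\in\kappa_I$ with either $\gamma_u(x)\cdot_u\gamma_u(y)\notin H_u$ or $x,y\in H_u$, both sides output $\gamma_u(x)\cdot_u\gamma_u(y)$ verbatim. When $u\notin\kappa_I$, by (\ref{KATEGOR_IkszU}) we have $L_u=G_u$, hence $x,y\in G_u$ and $\gamma_u(x)=x$, $\gamma_u(y)=y$; the \emph{otherwise} clause therefore produces $x\cdot_u y$, which is precisely the third line of (\ref{KATEGOR_uPRODigy}).

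No genuine obstacle is expected: the argument is a bookkeeping exercise in partitioning the cases of the expanded definitions according to the value of $\gamma_u(x)$ and the membership of $x$ in $G_u$ versus $\accentset{\bullet}H_u$. The only thing to keep in mind is that the new \emph{otherwise} clauses silently use the convention that $x$ lies in $L_u$, which is the only place where $\gamma_u$ is defined; this is automatic from the setting of (\ref{KATEGOR_EZazX}) in Theorem~\ref{KATEGOR_BUNCH_X}.
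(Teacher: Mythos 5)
Your proposal is correct and follows exactly the route the paper takes: the paper's entire proof is the single remark ``C.f.\,(\ref{DEFgamma})'', i.e.\ unfold the definition of $\gamma_u$ and match cases, which is precisely the bookkeeping you carry out in detail. Nothing is missing; your version is simply a fully written-out form of the same argument.
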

\begin{proof}
C.f.\,(\ref{DEFgamma}).
\end{proof}

\begin{lemma}
For $u\in\kappa_I$, the definition of the ordering in (\ref{KATEGOR_KibovitettRendezesITTIS}) can equivalently be given by any of the following ones.
\begin{equation}\label{KATEGOR_KibovitettRendezesITTIS55}
\begin{array}{ll}
\mbox{ 
$x\leq_u y$
iff
$\gamma_u(x)\prec_u \gamma_u(y)$
or 
$\left(
\mbox{
$\gamma_u(x)=\gamma_u(y)$
and 
$(x\in\accentset{\bullet}H_u$ or $y\in G_u)$
}
\right)
$
}
\end{array}
\end{equation}
\begin{equation}\label{KATEGOR_KibovitettRendezesITTIS66}
\begin{array}{ll}
\mbox{ 
$x<_u y$
iff 
$\gamma_u(x)\prec_u \gamma_u(y)$
or 
$\left(
\mbox{
$\gamma_u(x)=\gamma_u(y)$, 
$x\in\accentset{\bullet}H_u$,
$y\in G_u$
}
\right)
$
}
\end{array}
\end{equation}
\end{lemma}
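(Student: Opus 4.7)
The strategy is a straightforward four–case analysis based on the disjoint decomposition $L_u=G_u\,\dot\cup\,\accentset{\bullet}H_u$ from (\ref{KATEGOR_IkszU}) together with the definition of $\gamma_u$ in (\ref{DEFgamma}): on $G_u$ the map $\gamma_u$ is the identity, while it sends $\accentset{\bullet}a\in\accentset{\bullet}H_u$ to $a\in H_u\subseteq G_u$. In particular, for $x,y\in L_u$ the equality $\gamma_u(x)=\gamma_u(y)$ can hold in three genuinely different ways: both are equal elements of $G_u$, both are the same $\accentset{\bullet}a$, or $\{x,y\}=\{a,\accentset{\bullet}a\}$ for some $a\in H_u$. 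This observation is what drives the side condition ``$x\in\accentset{\bullet}H_u$ or $y\in G_u$'' in (\ref{KATEGOR_KibovitettRendezesITTIS55}).

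First I would verify (\ref{KATEGOR_KibovitettRendezesITTIS55}) by inspecting each of the four cases determined by which of $G_u,\accentset{\bullet}H_u$ contain $x$ and $y$. When $x,y\in G_u$, both sides reduce to $x\preceq_u y$, since the side condition is automatically fulfilled by $y\in G_u$. When $x=\accentset{\bullet}a$ and $y=\accentset{\bullet}b$, both sides reduce to $a\preceq_u b$, the side condition being fulfilled by $x\in\accentset{\bullet}H_u$. In the mixed case $x\in G_u$, $y=\accentset{\bullet}b\in\accentset{\bullet}H_u$, the side condition fails, so the right-hand side degenerates to $\gamma_u(x)\prec_u\gamma_u(y)$, matching the prescription $x<_u\accentset{\bullet}b\iff x\prec_u b$ from (\ref{KATEGOR_KibovitettRendezesITTIS}); equality $x=y$ is impossible here since the union is disjoint. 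In the opposite mixed case $x=\accentset{\bullet}a$, $y\in G_u$, the side condition holds and the right-hand side becomes $a\preceq_u y$, matching $\accentset{\bullet}a<_u y\iff a\preceq_u y$ in (\ref{KATEGOR_KibovitettRendezesITTIS}).

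Next I would derive (\ref{KATEGOR_KibovitettRendezesITTIS66}) from (\ref{KATEGOR_KibovitettRendezesITTIS55}) by taking the strict part, that is, adjoining the requirement $x\neq y$. If $\gamma_u(x)\prec_u\gamma_u(y)$ then automatically $x\neq y$, so this disjunct survives unchanged. In the second disjunct one has $\gamma_u(x)=\gamma_u(y)$ together with ``$x\in\accentset{\bullet}H_u$ or $y\in G_u$'', and the previous paragraph shows that $x=y$ then happens precisely when $x,y\in G_u$ or $x,y\in\accentset{\bullet}H_u$; excluding these forces $x\in\accentset{\bullet}H_u$ and $y\in G_u$ simultaneously, which is exactly the side condition in (\ref{KATEGOR_KibovitettRendezesITTIS66}).

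I do not expect any real obstacle; the only thing to be careful about is not to double-count the element $a\in H_u$ and its companion $\accentset{\bullet}a\in\accentset{\bullet}H_u$, which have the same image under $\gamma_u$ but sit at neighbouring positions in $\leq_u$, and to remember that the side condition in (\ref{KATEGOR_KibovitettRendezesITTIS55}) is precisely what distinguishes $\accentset{\bullet}a\leq_u a$ (true) from $a\leq_u\accentset{\bullet}a$ (false) in this tie-breaking situation.
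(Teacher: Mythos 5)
Your proof is correct and follows essentially the same route as the paper, which simply observes that (\ref{KATEGOR_KibovitettRendezesITTIS}) means each $\accentset{\bullet}a$ is inserted immediately below $a$ and that both reformulations express this; your four-case analysis over the disjoint decomposition $L_u=G_u\,\dot\cup\,\accentset{\bullet}H_u$ is just the rigorous spelling-out of that one-line observation.
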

\begin{proof}
For $u\in\kappa_I$, the meaning of the definition of the ordering in (\ref{KATEGOR_KibovitettRendezesITTIS}) is that for any element $a$ in a subgroup, its dotted copy $\accentset{\bullet}a$ is inserted just below $a$.
Any of (\ref{KATEGOR_KibovitettRendezesITTIS55}) and (\ref{KATEGOR_KibovitettRendezesITTIS66}) expresses the same.
\end{proof}

Throughout the paper we adopt the notation of Definition~\ref{KATEGOR_DEFbunch} and Theorem~\ref{KATEGOR_BUNCH_X}, with appropriate superscripts $^{\scaleto{(\mathbf X)}{4pt}}$ or $^{\scaleto{(\mathbf Y)}{4pt}}$ if more than one algebra is involved in the discussion.
\begin{definition}\label{DEFbunchHom}
Let 
$$
\begin{array}{l}
{\pazocal X}=\langle 
\textbf{\textit{G$^{\scaleto{(\mathbf X)}{4pt}}_u$}},
\textbf{\textit{H$^{\scaleto{(\mathbf X)}{4pt}}_u$}}, 
\varsigma_{u\to v}^{\scaleto{(\mathbf X)}{4pt}}  \rangle_{\langle \kappa_o^{\scaleto{(\mathbf X)}{4pt}}, \kappa_J^{\scaleto{(\mathbf X)}{4pt}}, \kappa_I^{\scaleto{(\mathbf X)}{4pt}},\leq_{\kappa^{\scaleto{(\mathbf X)}{3pt}}}\rangle}
\mbox{ with $\textbf{\textit{G$^{\scaleto{(\mathbf X)}{4pt}}_u$}}=(G^{\scaleto{(\mathbf X)}{4pt}}_u,\preceq^{\scaleto{(\mathbf X)}{4pt}}_u,\cdot^{\scaleto{(\mathbf X)}{4pt}}_u,\ { }^{-1^{\scaleto{(\mathbf X)}{3pt}}_u},u),
$}
\\
{\pazocal Y}=\langle \textbf{\textit{G$^{\scaleto{(\mathbf Y)}{4pt}}_u$}},\textbf{\textit{H$^{\scaleto{(\mathbf Y)}{4pt}}_u$}}, \varsigma_{u\to v}^{\scaleto{(\mathbf Y)}{4pt}}  \rangle_{\langle \kappa_o^{\scaleto{(\mathbf Y)}{4pt}}, \kappa_J^{\scaleto{(\mathbf Y)}{4pt}}, \kappa_I^{\scaleto{(\mathbf Y)}{4pt}},\leq_{\kappa^{\scaleto{(\mathbf Y)}{3pt}}}\rangle}
\mbox{ with $ \textbf{\textit{G$^{\scaleto{(\mathbf Y)}{4pt}}_u$}}=(G^{\scaleto{(\mathbf Y)}{4pt}}_u,\preceq^{\scaleto{(\mathbf Y)}{4pt}}_u,\cdot^{\scaleto{(\mathbf Y)}{4pt}}_u,\ { }^{-1^{\scaleto{(\mathbf Y)}{3pt}}_u},u)
$}
\end{array}
$$
be bunches of layer groups.
Let
\begin{equation}\label{PHIdomain}
\mbox{
$
G^{\scaleto{(\mathbf X)}{4pt}}
=
\displaystyle\dot \bigcup_{u\in \kappa^{\scaleto{(\mathbf X)}{4pt}}}G^{\scaleto{(\mathbf X)}{4pt}}_u
$
and
$
G^{\scaleto{(\mathbf Y)}{4pt}}
=
\displaystyle\dot \bigcup_{s\in \kappa^{\scaleto{(\mathbf Y)}{3pt}}}G^{\scaleto{(\mathbf Y)}{4pt}}_s
$.
}
\end{equation}
We say that 
\begin{equation}\label{KATEGOR_}
\Phi : G^{\scaleto{(\mathbf X)}{4pt}} \to G^{\scaleto{(\mathbf Y)}{4pt}}
\end{equation}
is a {\em bunch homomorphism} from $\pazocal X$ to $\pazocal Y$
if
\begin{enumerate}[start=1,label={(S\arabic*)}]
\item\label{KATEGOR_embedding}
$\Phi |_{\kappa^{\scaleto{(\mathbf X)}{4pt}}}$ maps into $\kappa^{\scaleto{(\mathbf Y)}{4pt}}$ and is isotone:\hfill 
if $u,v\in{\kappa^{\scaleto{(\mathbf X)}{4pt}}}$, 
$u\leq_{\kappa^{\scaleto{(\mathbf X)}{3pt}}} v$ then $\Phi(u)\leq_{\kappa^{\scaleto{(\mathbf Y)}{3pt}}} \Phi(v)$,
\item\label{KATEGOR_homomorfizmus} 
for $u\in \kappa^{\scaleto{(\mathbf X)}{4pt}}$, 
\begin{equation}\label{PhiSlice}
\Phi_u:=\Phi|_{G^{\scaleto{(\mathbf X)}{4pt}}_u}
\end{equation}
\hfill
is an $o$-group homomorphism from 
\textbf{\textit{G$^{\scaleto{(\mathbf X)}{4pt}}_u$}} into 
\textbf{\textit{G$^{\scaleto{(\mathbf Y)}{4pt}}_{\Phi(u)}$}},
\item\label{KATEGOR_commutes}
$\Phi$ commutes with the transitions:

for $u,v\in{\kappa^{\scaleto{(\mathbf X)}{4pt}}}$, $u\leq_{\kappa^{\scaleto{(\mathbf X)}{3pt}}}v$,
the following diagram commutes.
\begin{figure}[ht]
\begin{diagram}
G^{\scaleto{(\mathbf X)}{4pt}}_u & \rEmbed_{\Phi_u} & G^{\scaleto{(\mathbf Y)}{4pt}}_{\Phi(u)}
\\
\dTo^{\varsigma_{u\to v}^{\scaleto{(\mathbf X)}{4pt}}} & & \dTo_{\varsigma_{{\Phi(u)}\to {\Phi(v)}}^{\scaleto{(\mathbf Y)}{4pt}}} & \\
G^{\scaleto{(\mathbf X)}{4pt}}_v & \rEmbed_{\Phi_v} & G^{\scaleto{(\mathbf Y)}{4pt}}_{\Phi(v)} \\
\end{diagram}
\end{figure}

\item\label{KATEGOR_legkisebbelem}
$\Phi$ preserves the least element of the skeleton:\ \hfill $\Phi(\min_{\leq_{\kappa^{\scaleto{(\mathbf X)}{3pt}}}} \kappa^{\scaleto{(\mathbf X)}{4pt}})=\min_{\leq_{\kappa^{\scaleto{(\mathbf Y)}{3pt}}}} \kappa^{\scaleto{(\mathbf Y)}{4pt}}$,
\item\label{KATEGOR_particio}
$\Phi$ respects the partition:
\hfill
$\Phi(\kappa_o^{\scaleto{(\mathbf X)}{4pt}})\subseteq\kappa_o^{\scaleto{(\mathbf Y)}{4pt}}$, $\Phi(\kappa_J^{\scaleto{(\mathbf X)}{4pt}})\subseteq\kappa_J^{\scaleto{(\mathbf Y)}{4pt}}\cup\kappa_o^{\scaleto{(\mathbf Y)}{4pt}}$, 
$\Phi(\kappa_I^{\scaleto{(\mathbf X)}{4pt}})\subseteq\kappa_I^{\scaleto{(\mathbf Y)}{4pt}}\cup\kappa_o^{\scaleto{(\mathbf Y)}{4pt}}$,
\item\label{KATEGOR_subgroups_and_complements}
$\Phi$ preserves the subgroups and their complements:

\hfill
if 
$u\in\kappa_I^{\scaleto{(\mathbf X)}{4pt}}$
and
$\Phi(u)\in\kappa_I^{\scaleto{(\mathbf Y)}{4pt}}$
then
$\Phi_u(H^{\scaleto{(\mathbf X)}{4pt}}_u)\subseteq H^{\scaleto{(\mathbf Y)}{4pt}}_{\Phi(u)}$
and
$\Phi_u(G^{\scaleto{(\mathbf X)}{4pt}}_u\setminus H^{\scaleto{(\mathbf X)}{4pt}}_u)\subseteq G^{\scaleto{(\mathbf Y)}{4pt}}_{\Phi(u)}\setminus H^{\scaleto{(\mathbf Y)}{4pt}}_{\Phi(u)}$,
\item\label{KATEGOR_szomszed}
$\Phi$ respects the neighborhood operations\footnote{Since it is clear from the context, and to make the notation less convoluted, the superscripts $\iksz$ and $\ipsz$ will not be indicated in the neighborhood operations. For example, here ${ }_{\downarrow_u}$ denotes the neighborhood operation in $\textbf{\textit{G$^{\scaleto{(\mathbf X)}{3pt}}_u$}}$
and 
${ }_{\downarrow_{\Phi(u)}}$ denotes the neighborhood operation in $\textbf{\textit{G$^{\scaleto{(\mathbf Y)}{3pt}}_{\Phi(u)}$}}$.}
\begin{enumerate}
\item\label{S6a} 
if $u\in\kappa_J^{\scaleto{(\mathbf X)}{4pt}}$ and 
$\Phi(u)\in\kappa_J^{\scaleto{(\mathbf Y)}{4pt}}$
then for $x\in G^{\scaleto{(\mathbf X)}{4pt}}_u$,
$\Phi(x_{\downarrow_u})={\Phi(x)}_{\downarrow_{\Phi(u)}}$,

\item\label{S6b}
if $u\in\kappa_J^{\scaleto{(\mathbf X)}{4pt}}$ and 
$\Phi(u)\in\kappa_o^{\scaleto{(\mathbf Y)}{4pt}}$
then for $x\in G^{\scaleto{(\mathbf X)}{4pt}}_u$,
$\Phi(x_{\downarrow_u})=\Phi(x)$,
\end{enumerate}

\item\label{KATEGOR_injective}
$\Phi$ is partially injective:
for $u,v\in{\kappa^{\scaleto{(\mathbf X)}{4pt}}}$,
$x\in G^{\scaleto{(\mathbf X)}{4pt}}_u$,
$y\in G^{\scaleto{(\mathbf X)}{4pt}}_v$,
if
$\Phi(uv)\in \kappa_I^{\scaleto{(\mathbf Y)}{4pt}}$
and
$
\varsigma_{v\to uv}^{\scaleto{(\mathbf X)}{4pt}}(y)
\prec^{\scaleto{(\mathbf X)}{4pt}}_{uv}
\varsigma_{u\to uv}^{\scaleto{(\mathbf X)}{4pt}}(x)
\in
H^{\scaleto{(\mathbf X)}{4pt}}_{uv}
$
then
$
\Phi_{uv}(\varsigma_{v\to uv}^{\scaleto{(\mathbf X)}{4pt}}(y)) 
\prec^{\scaleto{(\mathbf Y)}{4pt}}_{\Phi(uv)}
\Phi_{uv}(\varsigma_{u\to uv}^{\scaleto{(\mathbf X)}{4pt}}(x)) 
$.

\end{enumerate}
Not only the $\Phi_u$'s can be defined from $\Phi$ via (\ref{PhiSlice}) but also $\Phi$ can be recovered from the $\Phi_u$'s via (\ref{PHIdomain}) and (\ref{KATEGOR_}) in the obvious manner. 
Therefore, alternatively, one may think of $\Phi$ as a system of $o$-group homomorphisms 
$
\{\Phi_u : u\in\kappa^{\scaleto{(\mathbf X)}{4pt}}\}
$ 
with properties \ref{KATEGOR_embedding}, \ref{KATEGOR_commutes}--\ref{KATEGOR_injective}.
Bunch homomorphisms are homomorphisms, in the sense of Definition~\ref{homodirsyst}, of the underlying direct systems of the bunches
which satisfy the additional conditions \ref{KATEGOR_legkisebbelem}--\ref{KATEGOR_injective}.
\end{definition}

\begin{lemma}\label{KATEGOR_HogyanLatszik}
Let 
$$
\begin{array}{l}
\mathbf X=(X,\leq^{\scaleto{(\mathbf X)}{4pt}},\teiksz,\ite{}^{\scaleto{(\mathbf X)}{4pt}},t^{\scaleto{(\mathbf X)}{4pt}},f^{\scaleto{(\mathbf X)}{4pt}}),
\\
\mathbf Y=(Y,\leq^{\scaleto{(\mathbf Y)}{4pt}},\teipsz,\ite{}^{\scaleto{(\mathbf Y)}{4pt}},t^{\scaleto{(\mathbf Y)}{4pt}},f^{\scaleto{(\mathbf Y)}{4pt}})
\end{array}
$$ 
be involutive FL$_e$-chains, 
along with their respective bunches of layer groups
\begin{equation}\label{EQrend875skjdhJG}
\begin{array}{l}
{\pazocal X}=\langle 
\textbf{\textit{G$^{\scaleto{(\mathbf X)}{4pt}}_u$}},
\textbf{\textit{H$^{\scaleto{(\mathbf X)}{4pt}}_u$}}, 
\varsigma_{u\to v}^{\scaleto{(\mathbf X)}{4pt}}  \rangle_{\langle \kappa_o^{\scaleto{(\mathbf X)}{4pt}}, \kappa_J^{\scaleto{(\mathbf X)}{4pt}}, \kappa_I^{\scaleto{(\mathbf X)}{4pt}},\leq_{\kappa^{\scaleto{(\mathbf X)}{3pt}}}\rangle}
\mbox{ with $\textbf{\textit{G$^{\scaleto{(\mathbf X)}{4pt}}_u$}}=(G^{\scaleto{(\mathbf X)}{4pt}}_u,\preceq^{\scaleto{(\mathbf X)}{4pt}}_u,\cdot^{\scaleto{(\mathbf X)}{4pt}}_u,\ { }^{-1^{\scaleto{(\mathbf X)}{3pt}}_u},u),
$}
\\
{\pazocal Y}=\langle \textbf{\textit{G$^{\scaleto{(\mathbf Y)}{4pt}}_u$}},\textbf{\textit{H$^{\scaleto{(\mathbf Y)}{4pt}}_u$}}, \varsigma_{u\to v}^{\scaleto{(\mathbf Y)}{4pt}}  \rangle_{\langle \kappa_o^{\scaleto{(\mathbf Y)}{4pt}}, \kappa_J^{\scaleto{(\mathbf Y)}{4pt}}, \kappa_I^{\scaleto{(\mathbf Y)}{4pt}},\leq_{\kappa^{\scaleto{(\mathbf Y)}{3pt}}}\rangle}
\mbox{ with $ \textbf{\textit{G$^{\scaleto{(\mathbf Y)}{4pt}}_u$}}=(G^{\scaleto{(\mathbf Y)}{4pt}}_u,\preceq^{\scaleto{(\mathbf Y)}{4pt}}_u,\cdot^{\scaleto{(\mathbf Y)}{4pt}}_u,\ { }^{-1^{\scaleto{(\mathbf Y)}{3pt}}_u},u).
$}
\end{array}
\end{equation}
\begin{enumerate}
\item
If $\varphi:X\to Y$ is a homomorphism 
from $\mathbf X$ to $\mathbf Y$
then 
\begin{equation}\label{KATEGOR_EQmegszoRITOm}
\Phi:=\varphi|_{G^{\scaleto{(\mathbf X)}{3pt}}}
\end{equation}
is a bunch homomorphism from $\pazocal X$ to $\pazocal Y$.

\item 
If $\Phi
$ is a bunch homomorphism from $\pazocal X$ to $\pazocal Y$ then 
$\Phi$ extends to a homomorphims $\varphi:X\to Y$ 
from $\mathbf X$ to $\mathbf Y$
via
\begin{equation}\label{KATEGOR_ViSSZaEpulEredeti}
\varphi(x)
=
\left\{
\begin{array}{ll}
\Phi(x)
& \mbox{ if $x\in G^{\scaleto{(\mathbf X)}{4pt}}_u$},
\\
\Phi(a) 
& \mbox{ if $\accentset{\bullet}a=x\in \accentset{\bullet}H_u$ and $\Phi(u)\in\kappa_o^{\scaleto{(\mathbf Y)}{4pt}}$},\\
\accentset{\bullet}{\Phi(a)}
 & \mbox{ if $\accentset{\bullet}a=x\in \accentset{\bullet}H_u$ and $\Phi(u)\in\kappa_I^{\scaleto{(\mathbf Y)}{4pt}}$}.\\
\end{array}
\right. 
\footnote{In the expression $\accentset{\bullet}{\Phi_u(z)}$ the $\accentset{\bullet}{.}$ operation is computed in the respective layer group (in \textbf{\textit{G$^{\scaleto{(\mathbf Y)}{4pt}}_{\Phi(u)}$}}).}
\end{equation}

\item 
The previous two items describe a bijective correspondence\footnote{Here we tacitly identify, for any $x\in X$, $\accentset{\bullet}x$ given in (\ref{KATEGOR_XHiGYkESZUL}) by $\accentset{\bullet}x$ in (\ref{KATEGOR_Hukeszul}).} between homomorphisms of odd or even involutive FL$_e$-chains and bunch homomorphisms of their corresponding bunch representations.

\end{enumerate}
\end{lemma}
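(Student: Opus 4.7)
The plan is to handle the three items in turn. For item (1), starting from an FL$_e$-homomorphism $\varphi: \mathbf{X} \to \mathbf{Y}$, I would show that $\Phi := \varphi|_{G^{(\mathbf{X})}}$ satisfies each of (S1)--(S8). Since $\varphi$ preserves $t$, $f$, multiplication and residuation, hence $\nega{}$ and idempotency, it follows that $\varphi$ maps $\kappa^{(\mathbf{X})}$ into $\kappa^{(\mathbf{Y})}$ isotonically by the idempotent characterization in (\ref{KATEGOR_IGYleszSKELETON}) and sends $t^{(\mathbf{X})}$ to $t^{(\mathbf{Y})}$, yielding (S1) and (S4); combining this with a three-row split of Table~\ref{KATEGOR_ThetaPsiOmegaAGAIN} and (\ref{KATEGOR_kappaIJ}) yields (S5). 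The operations $\cdot_u$ and $^{-1_u}$ on $G_u$ are first-order definable from $\cdot$, $\ite{}$, and the idempotent $u$ via (\ref{KATEGOR_IgyTorzulaSzorzat}) and (\ref{KATEGOR_EzLeSzainVerZ}); since $\varphi$ carries $G_u^{(\mathbf{X})}$ into $G_{\varphi(u)}^{(\mathbf{Y})}$ by (\ref{KATEGOR_DEFcsopi}), each $\Phi_u$ becomes an $o$-group homomorphism, settling (S2). Then (S3) is immediate from (\ref{KATEGOR_MapAzSzorzas}), (S6) from the $u$-invertibility characterization of $H_u$ in (\ref{KATEGOR_XHiGYkESZUL}), and (S7) from (\ref{KATEGOR_SplitNega}); for (S8), the hypothesis places $\accentset{\bullet}{\varsigma_{u\to uv}^{(\mathbf{X})}(x)}$ strictly between $\varsigma_{v\to uv}^{(\mathbf{X})}(y)$ and $\varsigma_{u\to uv}^{(\mathbf{X})}(x)$ in $L_{uv}^{(\mathbf{X})}$, and $\varphi$ preserves that strict separation.

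For item (2), given a bunch homomorphism $\Phi$, the extension $\varphi$ from (\ref{KATEGOR_ViSSZaEpulEredeti}) is well-defined because (S5) determines which clause applies. I would then verify preservation of $t$ (by (S4)), of $\nega{}$ (by branching on (\ref{KATEGOR_SplitNega}) according to the partition class of $u$ and the image class permitted by (S5), using (S6), (S7), and identity (\ref{87JHxdhFldD})), of the order (by (\ref{KATEGOR_RendeZesINNOVATIVAN}) together with (\ref{KATEGOR_KibovitettRendezesITTIS55}), using (S1), (S3), and (S8)), and of multiplication (by (\ref{KATEGOR_EgySzeruTe}) and (\ref{KATEGOR_uPRODigy}), where the inner $\cdot_u$-product is handled by (S2), the $\rho$-reductions by (S3), and the outer branching on the $H$-versus-$G\setminus H$ split by (S6)). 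Since $\res{}{x}{y}=\nega{(x\cdot\nega{y})}$ is definable from these, residuation is preserved too.

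The main obstacle is the multiplication and order verifications in item (2), especially in the degenerate situations where $\Phi$ maps an index in $\kappa_I^{(\mathbf{X})}$ into $\kappa_o^{(\mathbf{Y})}$, so that an entire layer with a dotted copy collapses onto a single $o$-group and dotted elements get identified with their undotted counterparts, or where $\Phi$ maps an index in $\kappa_J^{(\mathbf{X})}$ into $\kappa_o^{(\mathbf{Y})}$, forcing a discretely-ordered group to be mapped into one where the neighborhood-based formula for $\nega{}$ in (\ref{KATEGOR_SplitNega}) degenerates; this is precisely where clause (S7)(b) and identity (\ref{87JHxdhFldD}) are invoked. Axiom (S8) is the minimal extra constraint that keeps the ordering coherent across such collapses. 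Item (3) is then bookkeeping: restricting an extended $\Phi$ tautologically returns $\Phi$; conversely, for an FL$_e$-homomorphism $\varphi$, its restriction $\Phi$ re-extended via (\ref{KATEGOR_ViSSZaEpulEredeti}) returns $\varphi$, because $\varphi$'s action on the dotted copies is already determined by its action on $G^{(\mathbf{X})}$ and its respect for the partition, under the identification of $\accentset{\bullet}x$ from (\ref{KATEGOR_XHiGYkESZUL}) with $\accentset{\bullet}x$ from (\ref{KATEGOR_Hukeszul}) noted in the lemma's footnote.
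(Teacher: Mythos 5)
Your overall architecture coincides with the paper's: restrict $\varphi$ to $G^{(\mathbf X)}$ for item (1) and verify \ref{KATEGOR_embedding}--\ref{KATEGOR_injective} one by one; extend $\Phi$ by the displayed formula for item (2) and verify the homomorphism conditions by case analysis on the partition classes of $u$ and $\Phi(u)$; observe for item (3) that the action on the dotted copies is forced. However, two steps in your item (1) are justified by citations that do not actually yield them, and together they conceal most of the real work of that direction.

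First, you assert that $\varphi$ carries $G_u^{(\mathbf X)}$ into $G_{\varphi(u)}^{(\mathbf Y)}$ ``by (\ref{KATEGOR_DEFcsopi})''. That equation is only the definition of $G_u$ as $L_u$ or $L_u\setminus\accentset{\bullet}H_u$. Preservation of the local unit $x\mapsto\res{}{x}{x}$ gives $\varphi(G_u^{(\mathbf X)})\subseteq L_{\varphi(u)}^{(\mathbf Y)}$ for free, but when $\varphi(u)\in\kappa_I^{(\mathbf Y)}$ one must still rule out $\varphi(x)\in\accentset{\bullet}H^{(\mathbf Y)}_{\varphi(u)}$, and nothing you cite does that. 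The paper's argument is a genuine contradiction: if $\varphi(x)$ were dotted it would not be $\varphi(u)$-invertible, so $x\notin H_u^{(\mathbf X)}$ and hence also $\nega{x}\notin H_u^{(\mathbf X)}$; yet $\nega{(\varphi(x))}$ would lie in $H^{(\mathbf Y)}_{\varphi(u)}$ because the residual complement swaps $H$ and $\accentset{\bullet}H$ by the first two rows of (\ref{KATEGOR_SplitNega}), while the identity ${y}{\nega{u}}=y$ for non-$u$-invertible $y$ from (\ref{KATEGOR_XHiGYkESZUL}) then forces the strict inequality $\varphi(\nega{x})>\varphi(\nega{x})$. Second, your claim that \ref{KATEGOR_particio} follows from ``a three-row split of Table~\ref{KATEGOR_ThetaPsiOmegaAGAIN}'' misses the only hard case: $u\in\kappa_J^{(\mathbf X)}$ (so $\nega{u}$ is not idempotent) does not prevent $\nega{(\varphi(u))}$ from being idempotent, since $\varphi$ may collapse layers, and in that case one must prove $\varphi(u)\in\kappa_o^{(\mathbf Y)}$. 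The paper does this by showing $\nega{u}\in G_u^{(\mathbf X)}$, writing $u$ as a product involving $\nega{u}$ and its layer inverse, and computing $\varphi(u)=\nega{(\varphi(u))}$, whence $\varphi(u)=t^{(\mathbf Y)}$ by positivity. Without these two arguments item (1), and with it item (3), is not established; the remainder of your outline (the case analyses for order and multiplication in item (2), the role of \ref{KATEGOR_szomszed} and (\ref{87JHxdhFldD}) at the collapsing indices, and the use of \ref{KATEGOR_injective} to keep the dotted copies separated) does match the paper's proof.
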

\begin{proof}
We adopt the notation of Theorem~\ref{KATEGOR_BUNCH_X} with the appropriate superscript $^{\scaleto{(\mathbf X)}{4pt}}$ or $^{\scaleto{(\mathbf Y)}{4pt}}$.
A mapping $\varphi$ is a homomorphism 
from $\mathbf X$ to $\mathbf Y$
if and only if the following conditions hold.
For $x,y\in X$,
\begin{enumerate}[(B1)]
\item\label{KATEGOR_rESzhaLMaZ} $\varphi(X)\subseteq Y$, 
\item\label{KATEGOR_OrderINGresZe} $x\leq^{\scaleto{(\mathbf X)}{4pt}} y$ implies 
$\varphi(x)\leq^{\scaleto{(\mathbf Y)}{4pt}}\varphi(y)$, 
\item\label{KATEGOR_ReSTrict}
$\varphi(\giksz{x}{y})=\gipsz{\varphi(x)}{\varphi(y)}$ 
\item\label{KATEGOR_ReSTrictITE} 
$\varphi(\resiksz{x}{y})=\resipsz{\varphi(x)}{\varphi(y)}$ 
\item\label{KATEGOR_TkegyenlOEk} 
$\varphi(t^{\scaleto{(\mathbf X)}{4pt}})=t^{\scaleto{(\mathbf Y)}{4pt}}$,
\item\label{KATEGOR_FkegyenlOEk} 
$\varphi(f^{\scaleto{(\mathbf X)}{4pt}})=f^{\scaleto{(\mathbf Y)}{4pt}}$.
\end{enumerate}

\bigskip
\noindent
(1) Assume that $\varphi:X\to Y$ is a homomorphism.
For $x\in G^{\scaleto{(\mathbf X)}{4pt}}_u$,
\begin{equation}\label{EQmindenPHI}
\varphi(x)\overset{(\ref{KATEGOR_EQmegszoRITOm})}{=}
\Phi(x)
\overset{(\ref{PhiSlice})}{=}
\Phi_u(x)
.
\end{equation}

\bigskip
\noindent
\ref{KATEGOR_particio}:
We start with two observations which we shall use without mention in the sequel.
\begin{itemize}
\item
Note that 
$u\in\kappa^{\scaleto{(\mathbf X)}{4pt}}_J$ if and only if $\negaiksz{u}$ is not idempotent, 
whereas
$u\in\kappa^{\scaleto{(\mathbf X)}{4pt}}_o\cup\kappa^{\scaleto{(\mathbf X)}{4pt}}_I$ if and only if $\negaiksz{u}$ is idempotent, 
see Table~\ref{KATEGOR_ThetaPsiOmegaAGAIN} and (\ref{KATEGOR_kappaIJ}).
\item
Since for $u \in\kappa^{\scaleto{(\mathbf X)}{4pt}}$, $u\overset{(\ref{EQrend875skjdhJG})}{\in}G^{\scaleto{(\mathbf X)}{4pt}}_u$, $\Phi(u)=\varphi(u)$ follows from (\ref{EQmindenPHI}).
\end{itemize}

\begin{description}
\item
If $u\in\kappa_o^{\scaleto{(\mathbf X)}{4pt}}$ then by Table~\ref{KATEGOR_ThetaPsiOmegaAGAIN}, $u=t^{\scaleto{(\mathbf X)}{4pt}}=f^{\scaleto{(\mathbf X)}{4pt}}$, hence 
$\varphi(u)=\varphi(t^{\scaleto{(\mathbf X)}{4pt}})=\varphi(f^{\scaleto{(\mathbf X)}{4pt}})$ follows, yielding $\varphi(u)=t^{\scaleto{(\mathbf Y)}{4pt}}=f^{\scaleto{(\mathbf Y)}{4pt}}$ by \ref{KATEGOR_TkegyenlOEk} and \ref{KATEGOR_FkegyenlOEk}, and in turn,
$\varphi(u)\in\kappa_o^{\scaleto{(\mathbf Y)}{4pt}}$ by Table~\ref{KATEGOR_ThetaPsiOmegaAGAIN}.

\item 
If $u\in\kappa_I^{\scaleto{(\mathbf X)}{4pt}}$
then 
$
\negaiksz{u}
$ 
 is idempotent,
hence $\varphi(\negaiksz{u})=\negaipsz{\varphi(u)}$ 
 is idempotent, too, 
yielding $\varphi(u)\in\kappa_I^{\scaleto{(\mathbf Y)}{4pt}}\cup\kappa_o^{\scaleto{(\mathbf Y)}{4pt}}$.

\end{description}

\begin{description}
\item 
Let $u\in\kappa_J^{\scaleto{(\mathbf X)}{4pt}}$.
If
$
\negaipsz{\varphi(u)}
$ 
is not idempotent then 
$\varphi(u)$ is in $\kappa_J^{\scaleto{(\mathbf Y)}{4pt}}$.
Assume $
\negaipsz{\varphi(u)}$ is idempotent. 
By 
(\ref{KATEGOR_XHiGYkESZUL})-(\ref{KATEGOR_DEFcsopi}), 
$G^{\scaleto{(\mathbf X)}{4pt}}_u=\{x\in X : \resiksz{x}{x}=u\}$.
Therefore,
 $
 u
\overset{u\in G^{\scaleto{(\mathbf X)}{4pt}}_u}{=}
\resiksz{u}{u}
=
\resiksz{\negaiksz{u}}{\negaiksz{u}}
$
holds and thus 
$\negaiksz{u}\in G^{\scaleto{(\mathbf X)}{4pt}}_u$.
By (\ref{KATEGOR_EzLeSzainVerZ}), 
${\left(\negaiksz{u}\right)}^{-1^{\scaleto{(\mathbf X)}{3pt}}_u}=\resiksz{\negaiksz{u}}{u}=\negaiksz{\left(\giksz{\negaiksz{u}}{\negaiksz{u}}\right)}$ follows,
hence
$
u
=
\gikszu{\negaiksz{u}}{{(\negaiksz{u})}^{-1^{\scaleto{(\mathbf X)}{3pt}}_u}}
\overset{(\ref{KATEGOR_IgyTorzulaSzorzat})}{=}
\giksz{\negaiksz{u}}{{(\negaiksz{u})}^{-1^{\scaleto{(\mathbf X)}{3pt}}_u}}
=
\giksz{\negaiksz{u}}{\negaiksz{\left(\giksz{\negaiksz{u}}{\negaiksz{u}}\right)}}
$.
Therefore, 
$
\varphi(u)
 =  
\varphi\left(\giksz{\negaiksz{u}}{\negaiksz{\left(\giksz{\negaiksz{u}}{\negaiksz{u}}\right)}}\right)
=
\gipsz{\negaipsz{\varphi(u)}}{\negaipsz{\left(\gipsz{\negaipsz{\varphi(u)}}{\negaipsz{\varphi(u)}}\right)}}
  = 
\gipsz{\negaipsz{\varphi(u)}}{\varphi(u)}
=
\varphi(\giksz{\negaiksz{u}}{u})
\overset{(\ref{KATEGOR_IgyNeznekKi})}{=}
\varphi(\negaiksz{u})
=
\negaipsz{\varphi(u)}$.
Since
$\varphi(u)=\negaipsz{\varphi(u)}$ and 
$\varphi(u)$ is positive,
$\varphi(u)=t^{\scaleto{(\mathbf Y)}{4pt}}=
\negaipsz{(t^{\scaleto{(\mathbf Y)}{4pt}})}
$ follows, and hence by Table~\ref{KATEGOR_ThetaPsiOmegaAGAIN},
$\varphi(u)\in\kappa_o^{\scaleto{(\mathbf Y)}{4pt}}$.
\end{description}

\bigskip
\noindent
\ref{KATEGOR_embedding}:
For $u \in\kappa^{\scaleto{(\mathbf X)}{4pt}}$,
$u$ is a positive idempotent element in $X$ by (\ref{KATEGOR_IGYleszSKELETON}),
hence $\varphi(u)$ is a positive idempotent element in $Y$ (see \ref{KATEGOR_TkegyenlOEk} and \ref{KATEGOR_OrderINGresZe} for positivity and \ref{KATEGOR_ReSTrict} for idempotence), and thus $\Phi(u)\in\kappa^{\scaleto{(\mathbf Y)}{4pt}}$ by (\ref{KATEGOR_IGYleszSKELETON}).
Let $u,v\in{\kappa^{\scaleto{(\mathbf X)}{4pt}}}$, 
$u\leq_{\kappa^{\scaleto{(\mathbf X)}{3pt}}} v$.
Then $u\leq^{\scaleto{(\mathbf X)}{4pt}} v$ by (\ref{KATEGOR_IGYleszSKELETON}).
It holds true that, $u\overset{(\ref{KATEGOR_IgyNeznekKi})}{\in} G^{\scaleto{(\mathbf X)}{4pt}}_u
$ 
and likewise,
$v\in G^{\scaleto{(\mathbf X)}{4pt}}$, hence
$
\Phi(u)
\overset{(\ref{EQmindenPHI})}{=}
\varphi(u)
$ 
and 
$
\Phi(v)
\overset{(\ref{EQmindenPHI})}{=}
\varphi(v)
$.
Since $u\leq^{\scaleto{(\mathbf X)}{4pt}} v$, 
$
\Phi(u)
=
\varphi(u)
\overset{\ref{KATEGOR_OrderINGresZe}}{\leq^{\scaleto{(\mathbf Y)}{4pt}}}
\varphi(v)
=
\Phi(v)
$
follows
and 
since
$\Phi(u),\Phi(v)\in\kappa^{\scaleto{(\mathbf Y)}{4pt}}$,
$
\Phi(u)
\leq_{\kappa^{\scaleto{(\mathbf Y)}{3pt}}}
\Phi(v)
$
follows
by (\ref{KATEGOR_IGYleszSKELETON}).

\bigskip
\noindent
\ref{KATEGOR_homomorfizmus}: 
Let  $u\in\kappa^{\scaleto{(\mathbf X)}{3pt}}$. 
Until the end of the proof of \ref{KATEGOR_szomszed}, we denote, for short,
\begin{equation}\label{EzaV}
w
:=
\varphi(u)
\overset{(\ref{EQmindenPHI})}{=}
\Phi(u)
\overset{(\ref{EQmindenPHI})}{=}
\Phi_u(u)
.
\end{equation}
First we prove that $w$ is the unit element of $\textbf{\textit{G$^{\scaleto{(\mathbf Y)}{4pt}}_w$}}$.
By (\ref{KATEGOR_IGYleszSKELETON}), $u$ is a positive idempotent element in $X$, 
and $u$ is the  unit element  of $\textbf{\textit{G$^{\scaleto{(\mathbf X)}{4pt}}_u$}}$, c.f.\ 
(\ref{EQrend875skjdhJG}). 
Since positive idempotent elements are mapped into positive idempotent elements by  homomorphisms, and since for every layer group there corresponds a single positive idempotent element which is its unit element and which indexes that particular layer group, it follows that $\Phi_u$, which coincides with $\varphi$ over $G^{\scaleto{(\mathbf X)}{4pt}}_u$
by (\ref{EQmindenPHI}), preserves the unit element.
\\
Next we prove that $\Phi_u$ maps $G^{\scaleto{(\mathbf X)}{4pt}}_u$ into $G^{\scaleto{(\mathbf Y)}{4pt}}_w$. 
Let $x\in G^{\scaleto{(\mathbf X)}{4pt}}_u$.
Then 
$
\resipsz{\Phi_u(x)}{\Phi_u(x)}
\overset{(\ref{EQmindenPHI})}{=}
\resipsz{\varphi(x)}{\varphi(x)}
\overset{\ref{KATEGOR_ReSTrictITE}}{=}
\varphi(\resiksz{x}{x})
=
\varphi(u)
\overset{(\ref{EzaV})}{=}
w
$, 
hence $\Phi_u(x)\overset{(\ref{KATEGOR_XHiGYkESZUL})}{\in}
L^{\scaleto{(\mathbf Y)}{4pt}}_w$. 
If $w\notin\kappa_I^{\scaleto{(\mathbf Y)}{4pt}}$ then 
$
G^{\scaleto{(\mathbf Y)}{4pt}}_w
\overset{(\ref{KATEGOR_DEFcsopi})}{=}
L^{\scaleto{(\mathbf Y)}{4pt}}_w$ and we are done.
Assume $w\in\kappa_I^{\scaleto{(\mathbf Y)}{4pt}}$.
Then 
$u\overset{\ref{KATEGOR_particio}}{\in}\kappa_I^{\scaleto{(\mathbf X)}{4pt}}$
and
we need to prove that $\Phi_u(x)$ is not in ${\accentset{\bullet}H_w^{\scaleto{(\mathbf Y)}{4pt}}}$, c.f.\ (\ref{KATEGOR_DEFcsopi}).
Contrary, assume  
$
\Phi_u(x)\in{\accentset{\bullet}H_{w}^{\scaleto{(\mathbf Y)}{4pt}}}
.
$
By (\ref{KATEGOR_IgyNeznekKi}), 
$
H^{\scaleto{(\mathbf Y)}{4pt}}_w
\subseteq
G^{\scaleto{(\mathbf Y)}{4pt}}_w
$
and by 
(\ref{KATEGOR_DEFcsopi}),
$G^{\scaleto{(\mathbf Y)}{4pt}}_w\cap \accentset{\bullet}H_{w}^{\scaleto{(\mathbf Y)}{4pt}}=\emptyset$, hence 
$H^{\scaleto{(\mathbf Y)}{4pt}}_w\cap \accentset{\bullet}H_{w}^{\scaleto{(\mathbf Y)}{4pt}}=\emptyset$ follows, and thus
$\Phi_u(x)\overset{(\ref{EQmindenPHI})}{=}
\varphi(x)$ is not $w$-invertible (with respect to $\teipsz$) by (\ref{KATEGOR_XHiGYkESZUL}).
Since $\varphi$ maps $u$-invertible elements into $w$-invertible ones by \ref{KATEGOR_ReSTrict} and (\ref{EzaV}), 
$x$ cannot be $u$-invertible, hence $x\notin H^{\scaleto{(\mathbf X)}{4pt}}_u$ by (\ref{KATEGOR_XHiGYkESZUL}). 
Therefore,  
$x\in G^{\scaleto{(\mathbf X)}{4pt}}_u\setminus H^{\scaleto{(\mathbf X)}{4pt}}_u
$. 
Since $u\in\kappa_I^{\scaleto{(\mathbf X)}{4pt}}$,
by the first two rows of (\ref{KATEGOR_SplitNega}),
$\negaiksz{}$maps $H^{\scaleto{(\mathbf X)}{4pt}}_u$ into $\accentset{\bullet}H^{\scaleto{(\mathbf X)}{4pt}}_u$ and vice versa,
hence
$
\negaiksz{x}
\in
G^{\scaleto{(\mathbf X)}{4pt}}_u
\setminus
H^{\scaleto{(\mathbf X)}{4pt}}_u
$
and
$
{H_{w}^{\scaleto{(\mathbf Y)}{4pt}}}
\overset{\Phi_u(x)\in{\accentset{\bullet}H_{w}^{\scaleto{(\mathbf Y)}{4pt}}}}{\ni}
\negaipsz{\Phi_u(x)}
\overset{(\ref{EQmindenPHI})}{=}
\negaipsz{\varphi(x)}
\overset{\ref{KATEGOR_ReSTrictITE},\ref{KATEGOR_FkegyenlOEk}}{=}
\varphi\left(\negaiksz{x}\right)
\overset{(\ref{EQmindenPHI})}{=}
\Phi_u\left(\negaiksz{x}\right)
$
which,
since $w$ is the unit element of $\textbf{\textit{G$^{\scaleto{(\mathbf Y)}{4pt}}_w$}}$,
leads to
$
\Phi_u\left(\negaiksz{x}\right)
\overset{(\ref{KATEGOR_XHiGYkESZUL})}{>}
\gipsz{\Phi_u\left(\negaiksz{x}\right)}{\negaipsz{w}}
\overset{(\ref{EzaV})}{=}
\gipsz{\Phi_u\left(\negaiksz{x}\right)}{\negaipsz{\Phi_u(u)}}
\overset{(\ref{EQmindenPHI})}{=}
\gipsz{\varphi(\negaiksz{x})}{\negaipsz{\varphi(u)}}
\overset{\ref{KATEGOR_ReSTrict}}{=}
\varphi\left(
\giksz{\negaiksz{x}}{\negaiksz{u}}
\right)
\overset{(\ref{EQmindenPHI})}{=}
\Phi_u\left(
\giksz{\negaiksz{x}}{\negaiksz{u}}
\right)
\overset{\negaiksz{x}\notin H^{\scaleto{(\mathbf X)}{4pt}}_u,\,(\ref{KATEGOR_XHiGYkESZUL})}{=}
\Phi_u\left(
\negaiksz{x}
\right)
$,
a contradiction.
\\
Third, we verify that for $u\in\kappa^{\scaleto{(\mathbf X)}{4pt}}$ and $x,y\in G^{\scaleto{(\mathbf X)}{4pt}}_u$
it holds true that
$
\Phi_u(\gdotu{x}{y})
=
\gdiamondw{\Phi_u(x)}{\Phi_u(y)}
$.
If $u\notin\kappa_I^{\scaleto{(\mathbf X)}{4pt}}$ then 
by \ref{KATEGOR_particio} $w\notin\kappa_I^{\scaleto{(\mathbf Y)}{4pt}}$, and 
$
\Phi_u(\gdotu{x}{y})
\overset{(\ref{EQmindenPHI})}{=}
\varphi(\gdotu{x}{y})
\overset{u\notin\kappa_I^{\scaleto{(\mathbf X)}{4pt}},\ (\ref{KATEGOR_IgyTorzulaSzorzat})}{=}
\varphi(\giksz{x}{y})
\overset{\ref{KATEGOR_ReSTrict}}{=}
\gipsz{\varphi(x)}{\varphi(y)}
\overset{(\ref{EQmindenPHI})}{=}
\gipsz{\Phi_u(x)}{\Phi_u(y)}
\overset{w\notin\kappa_I^{\scaleto{(\mathbf Y)}{4pt}},\ (\ref{KATEGOR_IgyTorzulaSzorzat})}{=}
\gdiamondw{\Phi_u(x)}{\Phi_u(y)}
$.
If $u\in\kappa_I^{\scaleto{(\mathbf X)}{4pt}}$ then 
$
\Phi_u(\gdotu{x}{y})
\overset{(\ref{EQmindenPHI})}{=}
\varphi(\gdotu{x}{y})
\overset{(\ref{KATEGOR_IgyTorzulaSzorzat})}{=}
\varphi(\resiksz{(\resiksz{(\giksz{x}{y})}{u})}{u})
\overset{\ref{KATEGOR_ReSTrictITE}}{=}
\resipsz{(\resipsz{(\gipsz{\varphi(x)}{\varphi(y)})}{w})}{w}
\overset{(\ref{EQmindenPHI})}{=}
\resipsz{(\resipsz{(\gipsz{\Phi_u(x)}{\Phi_u(y)})}{w})}{w}
$.
By \ref{KATEGOR_particio}, $w\in\kappa_I^{\scaleto{(\mathbf Y)}{4pt}}\cup\kappa_o^{\scaleto{(\mathbf Y)}{4pt}}$.
If $w\in\kappa_I^{\scaleto{(\mathbf Y)}{4pt}}$ then 
$
\resipsz{(\resipsz{(\gipsz{\Phi_u(x)}{\Phi_u(y)})}{w})}{w}
\overset{(\ref{KATEGOR_IgyTorzulaSzorzat})}{=}
\gdiamondw{\Phi_u(x)}{\Phi_u(y)}
$,
whereas if $w\in\kappa_o^{\scaleto{(\mathbf Y)}{4pt}}$ then $w=t^{\scaleto{(\mathbf Y)}{4pt}}=f^{\scaleto{(\mathbf Y)}{4pt}}$ holds by Table~\ref{KATEGOR_ThetaPsiOmegaAGAIN},
and hence
$
\resipsz{(\resipsz{(\gipsz{\Phi_u(x)}{\Phi_u(y)})}{w})}{w}
=
\resipsz{(\resipsz{(\gipsz{\Phi_u(x)}{\Phi_u(y)})}f^{\scaleto{(\mathbf Y)}{4pt}})}f^{\scaleto{(\mathbf Y)}{4pt}}
=
\nega{\nega{(\gipsz{\Phi_u(x)}{\Phi_u(y)})}}
=
\gipsz{\Phi_u(x)}{\Phi_u(y)}
\overset{(\ref{KATEGOR_IgyNeznekKi})}{=}
\gdiamondw{\Phi_u(x)}{\Phi_u(y)}
$.
\\
Summing up, we have shown that $\Phi_u$ is a group homomorphism from 
\textbf{\textit{G$^{\scaleto{(\mathbf X)}{4pt}}_u$}}
to
\textbf{\textit{G$^{\scaleto{(\mathbf Y)}{4pt}}_w$}}.
Finally, we verify that $\Phi_u$ is an $o$-group homomorphism.
Let $x,y\in G^{\scaleto{(\mathbf X)}{4pt}}_u$, $x\preceq^{\scaleto{(\mathbf X)}{4pt}}_u y$.
By (\ref{KATEGOR_RendeZesINNOVATIVAN})--(\ref{KATEGOR_KibovitettRendezesITTIS}), 
$x\leq^{\scaleto{(\mathbf X)}{4pt}} y$ and thus by \ref{KATEGOR_OrderINGresZe}, 
$
\Phi_u(x)
\overset{(\ref{EQmindenPHI})}{=}
\varphi(x)\leq^{\scaleto{(\mathbf X)}{4pt}}\varphi(y)\overset{(\ref{EQmindenPHI})}{=}
\Phi_u(y)$ follows.
Since we have already shown that $\Phi_u(x),\Phi_u(y)\in G^{\scaleto{(\mathbf Y)}{4pt}}_w$, by (\ref{KATEGOR_RendeZesINNOVATIVAN})--(\ref{KATEGOR_KibovitettRendezesITTIS})
we obtain 
$\Phi_u(x)\preceq^{\scaleto{(\mathbf Y)}{4pt}}_w\Phi_u(y)$.

\bigskip
\noindent
\ref{KATEGOR_commutes}:
For $x\in G^{\scaleto{(\mathbf X)}{4pt}}_u$,
$
\Phi_v(\varsigma_{u\to v}^{\scaleto{(\mathbf X)}{4pt}}(x))
\overset{(\ref{tipusSIGMA}),\,(\ref{EQmindenPHI})}{=}
\varphi(\varsigma_{u\to v}^{\scaleto{(\mathbf X)}{4pt}}(x))
\overset{(\ref{KATEGOR_MapAzSzorzas})}{=}
\varphi(\giksz{v}{x})
\overset{\ref{KATEGOR_ReSTrict}}{=}
\gipsz{\varphi(v)}{\varphi(x)}
\overset{(\ref{EQmindenPHI})}{=}
\gipsz{\Phi(v)}{\Phi_u(x)}
\overset{(\ref{KATEGOR_MapAzSzorzas}),\,\ref{KATEGOR_embedding}}{=}
\varsigma_{\Phi(u)\to \Phi(v)}^{\scaleto{(\mathbf Y)}{4pt}}(\Phi_u(x))
$.

\bigskip
\noindent
\ref{KATEGOR_legkisebbelem}:
By (\ref{KATEGOR_IGYleszSKELETON}), $t^{\scaleto{(\mathbf X)}{4pt}}$ is the least element of 
$\kappa^{\scaleto{(\mathbf X)}{4pt}}$, and $t^{\scaleto{(\mathbf Y)}{4pt}}$ is the least element of 
$\kappa^{\scaleto{(\mathbf Y)}{4pt}}$. Thus the claim follows from \ref{KATEGOR_TkegyenlOEk}.

\bigskip
\noindent
\ref{KATEGOR_subgroups_and_complements}:
Assume $u\in\kappa_I^{\scaleto{(\mathbf X)}{4pt}}$
and
$w
\in\kappa_I^{\scaleto{(\mathbf Y)}{4pt}}$.
If $x\in H^{\scaleto{(\mathbf X)}{4pt}}_u$ then by (\ref{KATEGOR_XHiGYkESZUL}),
$x$ is $u$-invertible (with respect to $\teiksz$), and hence $\Phi_u(x)$ must be $w$-invertible (with respect to $\teipsz$, by \ref{KATEGOR_homomorfizmus} and (\ref{EzaV})) and thus $\Phi_u(x)\in H^{\scaleto{(\mathbf Y)}{4pt}}_w$ by (\ref{KATEGOR_XHiGYkESZUL}), as stated.
Assume
$x\in G^{\scaleto{(\mathbf X)}{4pt}}_u\setminus H^{\scaleto{(\mathbf X)}{4pt}}_u$.
Then 
$\negaiksz{x}\in G^{\scaleto{(\mathbf X)}{4pt}}_u$ holds
by
(\ref{KATEGOR_SplitNega}).
Therefore, 
$
\Phi_u(x)
\overset{(\ref{EQmindenPHI})}{=}
\varphi(x)
=
\varphi\left(\negaiksz{\left(\negaiksz{x}\right)}\right)
=
\negaipsz{\varphi\left(\negaiksz{x}\right)}
\overset{(\ref{EQmindenPHI})}{=}
\negaipsz{\Phi_u\left(\negaiksz{x}\right)}
$.
Hence if, contrary to the statement,
$\Phi_u(x)$
were in
$H^{\scaleto{(\mathbf Y)}{4pt}}_w$
then 
$\negaipsz{\Phi_u\left(\negaiksz{x}\right)}$
were in
$H^{\scaleto{(\mathbf Y)}{4pt}}_w$, too,
and
using that 
$\negaipsz{}$ maps $H^{\scaleto{(\mathbf Y)}{4pt}}_w$ to $\accentset{\bullet}H^{\scaleto{(\mathbf Y)}{4pt}}_w$
(see the second row of (\ref{KATEGOR_SplitNega})),
$\Phi_u\left(\negaiksz{x}\right)$ would be in
$\accentset{\bullet}H^{\scaleto{(\mathbf Y)}{4pt}}_w$,
a contradiction to \ref{KATEGOR_homomorfizmus}.

\bigskip
\noindent
\ref{KATEGOR_szomszed}: 
\\
Suppose that $u\in\kappa_J$.
It holds true that
$
\Phi\left(\negaiksz{u}\right)
\overset{(\ref{EQmindenPHI})}{=}
\varphi\left(\negaiksz{u}\right)
\overset{\ref{KATEGOR_ReSTrictITE},\ref{KATEGOR_FkegyenlOEk}}{=}
\negaipsz{\varphi(u)}
\overset{(\ref{EzaV})}{=}
\negaipsz{w}
$.
Therefore,
$
\Phi(x_{\downarrow_u})
\overset{(\ref{PhiSlice})}{=}
\Phi_u(x_{\downarrow_u})
\overset{(\ref{87JHxdhFldD})}{=}
\Phi_u\left(
x
\cdot^{\scaleto{(\mathbf X)}{4pt}}_u
u_{\downarrow_u}
\right)
\overset{(\ref{KATEGOR_SplitNega})}{=}
\Phi_u\left(
x
\cdot^{\scaleto{(\mathbf X)}{4pt}}_u
\negaiksz{u}
\right)
\overset{\ref{KATEGOR_homomorfizmus}}{=}
\Phi_u(x)
\cdot^{\scaleto{(\mathbf Y)}{4pt}}_v
\Phi_u\left(\negaiksz{u}\right)
=
\Phi_u(x)
\cdot^{\scaleto{(\mathbf Y)}{4pt}}_w
\negaipsz{w}
.
$
\begin{enumerate}[(a)]
\item
It holds true that 
$
\Phi_u(x)
\cdot^{\scaleto{(\mathbf Y)}{4pt}}_w
\negaipsz{w}
\overset{(\ref{KATEGOR_SplitNega})}{=}
\Phi_u(x)
\cdot^{\scaleto{(\mathbf Y)}{4pt}}_w
w_{\downarrow_w}
\overset{(\ref{87JHxdhFldD})}{=}
\Phi_u(x)_{\downarrow_w}
$
if $w\in{\kappa_J^{\scaleto{(\mathbf Y)}{4pt}}}$,
and hence
$
\Phi(x_{\downarrow_u})
=
\Phi_u(x)
\cdot^{\scaleto{(\mathbf Y)}{4pt}}_w
\negaipsz{w}
=
{\Phi_u(x)}_{\downarrow_w}
\overset{(\ref{PhiSlice})}{=}
{\Phi(x)}_{\downarrow_w}
$.

\item
If
$w\in\kappa_o^{\scaleto{(\mathbf Y)}{4pt}}$ then 
$\Phi(\mathbf X)$ is odd and $w$ is its unit element, see Table~\ref{KATEGOR_ThetaPsiOmegaAGAIN}, and hence
$\negaipsz{w}=w$ holds.
Therefore,
$
\Phi_u(x)
\cdot^{\scaleto{(\mathbf Y)}{4pt}}_w
\negaipsz{w}
=
\Phi_u(x)
\cdot^{\scaleto{(\mathbf Y)}{4pt}}_w
w
=
\Phi_u(x)
\overset{(\ref{PhiSlice})}{=}
\Phi(x)
$.
\end{enumerate}

\bigskip
\noindent
\ref{KATEGOR_injective}:
By adopting the conditions of \ref{KATEGOR_injective},
by
(\ref{KATEGOR_KibovitettRendezesITTIS}),  
$
\varsigma_{u\to uv}^{\scaleto{(\mathbf X)}{4pt}}(y)
\prec^{\scaleto{(\mathbf X)}{4pt}}_{uv}
\varsigma_{u\to uv}^{\scaleto{(\mathbf X)}{4pt}}(x)
\in
H^{\scaleto{(\mathbf Y)}{4pt}}_{\Phi(uv)}
$
implies 
$
\varsigma_{u\to uv}^{\scaleto{(\mathbf X)}{4pt}}(y)
<^{\scaleto{(\mathbf X)}{4pt}}_{uv}
\accentset{\bullet}{\varsigma_{u\to uv}^{\scaleto{(\mathbf X)}{4pt}}(x)}
<^{\scaleto{(\mathbf X)}{4pt}}_{uv}
\varsigma_{u\to uv}^{\scaleto{(\mathbf X)}{4pt}}(x)
$,
therefore, 
referring to 
(\ref{KATEGOR_RendeZesINNOVATIVAN})
and
(\ref{KATEGOR_P5}),  
$
\varsigma_{u\to uv}^{\scaleto{(\mathbf X)}{4pt}}(y)
<^{\scaleto{(\mathbf X)}{4pt}}
\accentset{\bullet}{\varsigma_{u\to uv}^{\scaleto{(\mathbf X)}{4pt}}(x)}
<^{\scaleto{(\mathbf X)}{4pt}}
\varsigma_{u\to uv}^{\scaleto{(\mathbf X)}{4pt}}(x)
$.
Applying \ref{KATEGOR_OrderINGresZe} ensures
$
\varphi(\varsigma_{u\to uv}^{\scaleto{(\mathbf X)}{4pt}}(y))
\leq^{\scaleto{(\mathbf X)}{4pt}}
\varphi(\accentset{\bullet}{\varsigma_{u\to uv}^{\scaleto{(\mathbf X)}{4pt}}(x)})
\leq^{\scaleto{(\mathbf X)}{4pt}}
\varphi(\varsigma_{u\to uv}^{\scaleto{(\mathbf X)}{4pt}}(x))
$.
Assuming the opposite of the statement, which is
$
\varphi(\varsigma_{u\to uv}^{\scaleto{(\mathbf X)}{4pt}}(y))
\overset{(\ref{PhiSlice})}{=}
\Phi_{uv}(\varsigma_{u\to uv}^{\scaleto{(\mathbf X)}{4pt}}(y)) 
=
\Phi_{uv}(\varsigma_{u\to uv}^{\scaleto{(\mathbf X)}{4pt}}(x)) 
\overset{(\ref{PhiSlice})}{=}
\varphi(\varsigma_{u\to uv}^{\scaleto{(\mathbf X)}{4pt}}(x))
$
by \ref{KATEGOR_homomorfizmus},
would yield
$
\varphi\left(\accentset{\bullet}{\varsigma^{\scaleto{(\mathbf X)}{4pt}}_{u\to uv}(x)}\right)
=
\varphi(\varsigma^{\scaleto{(\mathbf X)}{4pt}}_{u\to uv}(x))
$,
a contradiction, since
$
\varphi\left(\accentset{\bullet}{\varsigma^{\scaleto{(\mathbf X)}{4pt}}_{u\to uv}(x)}\right)
\overset{(\ref{KATEGOR_XHiGYkESZUL})}{=}
\varphi\left(\giksz{\varsigma^{\scaleto{(\mathbf X)}{4pt}}_{u\to uv}(x)}{\negaiksz{(uv)}}\right)
=
\gipsz{\varphi(\varsigma^{\scaleto{(\mathbf X)}{4pt}}_{u\to uv}(x))}{\varphi\left(\negaiksz{(uv)}\right)}
\overset{(\ref{EzaV})}{=}
\gipsz{\varphi(\varsigma^{\scaleto{(\mathbf X)}{4pt}}_{u\to uv}(x))}{\negaipsz{\varphi(uv)}}
\overset{(\ref{KATEGOR_XHiGYkESZUL})}{=} 
\accentset{\bullet}{\varphi(\varsigma^{\scaleto{(\mathbf X)}{4pt}}_{u\to uv}(x))}$,
where the last equality uses $\Phi(uv)\in\kappa^{\scaleto{(\mathbf Y)}{4pt}}_I$ and
that, by \ref{KATEGOR_subgroups_and_complements}, $\varphi(\varsigma^{\scaleto{(\mathbf X)}{4pt}}_{u\to uv}(x))\in H^{\scaleto{(\mathbf Y)}{4pt}}_{\varphi(uv)}$.

\bigskip
\bigskip
\noindent
(2) Assume \ref{KATEGOR_embedding}-\ref{KATEGOR_injective}.
We shall refer to (\ref{KATEGOR_ViSSZaEpulEredeti}) in the following more detailed form:
\begin{equation}\label{KATEGOR_ViSSZaEpul}
\varphi(x)
=
\left\{
\begin{array}{ll}
\Phi(x)\left(\overset{(\ref{PhiSlice})}{=}
\Phi_u(x)
\overset{(\ref{DEFgamma})}{=} 
\Phi_u(
\gamma^{\scaleto{(\mathbf X)}{4pt}}_u(x))
)
\right) 
& \mbox{ if $x\in G^{\scaleto{(\mathbf X)}{4pt}}_u$},\\
\Phi(a)\left(\overset{(\ref{PhiSlice})}{=}\Phi_u(a)
\overset{(\ref{DEFgamma})}{=} 
\Phi_u(
\gamma^{\scaleto{(\mathbf X)}{4pt}}_u(x))
)
\right) & \mbox{ if $\accentset{\bullet}a=x\in \accentset{\bullet}H_u$ and $\Phi(u)\in\kappa_o^{\scaleto{(\mathbf Y)}{4pt}}$},\\
\accentset{\bullet}{\Phi(a)}\left(\overset{(\ref{PhiSlice})}{=}\accentset{\bullet}{\Phi_u(a)}
\overset{(\ref{DEFgamma})}{=} 
\accentset{\bullet}{
\Phi_u(
\gamma^{\scaleto{(\mathbf X)}{4pt}}_u(x))
)
}
\right)
 & \mbox{ if $\accentset{\bullet}a=x\in \accentset{\bullet}H_u$ and $\Phi(u)\in\kappa_I^{\scaleto{(\mathbf Y)}{4pt}}$},\\
\end{array}
\right.
\footnote{In the expression $\accentset{\bullet}{\Phi_u(a)}$ the $\accentset{\bullet}{.}$ operation is computed in the respective layer group (in \textbf{\textit{G$^{\scaleto{(\mathbf Y)}{4pt}}_{\Phi(u)}$}}).}
\end{equation}
from which it follows that for $x\in X$,
\begin{equation}\label{EziStrUe}
\gamma^{\scaleto{(\mathbf Y)}{4pt}}_{\Phi(u)}(\varphi(x))
=
\Phi_u(
\gamma^{\scaleto{(\mathbf X)}{4pt}}_u(x))
)
.
\end{equation}

\bigskip
\noindent
\ref{KATEGOR_rESzhaLMaZ}:
For $x\in X$,
\begin{equation}\label{KATEGOR_ViSSZaEpulCALC}
\varphi(x)
\overset{(\ref{KATEGOR_ViSSZaEpul})}{=}
\left\{
\begin{array}{ll}
\Phi_u(x) 
\overset{\ref{KATEGOR_homomorfizmus}}{\in}
G^{\scaleto{(\mathbf Y)}{4pt}}_{\Phi(u)}
\overset{(\ref{KATEGOR_IkszU}),(\ref{KATEGOR_EZazX})}{\subseteq}Y
& \mbox{ if $x\in G^{\scaleto{(\mathbf X)}{4pt}}$},\\
\Phi_u(a)
\overset{\ref{KATEGOR_subgroups_and_complements}}{\in}
H^{\scaleto{(\mathbf Y)}{4pt}}_{\Phi(u)}
\overset{\ref{KATEGOR_G2}}{\subseteq}
 G^{\scaleto{(\mathbf Y)}{4pt}}_{\Phi(u)}
 \subseteq Y
& \mbox{ if $\accentset{\bullet}a=x\in \accentset{\bullet}H^{\scaleto{(\mathbf X)}{4pt}}_u$ and $\Phi(u)\in\kappa_o^{\scaleto{(\mathbf Y)}{4pt}}$},\\
\accentset{\bullet}{\Phi_u(a)} \in \accentset{\bullet} H^{\scaleto{(\mathbf Y)}{4pt}}_{\Phi(u)}
\overset{(\ref{KATEGOR_IkszU}),(\ref{KATEGOR_EZazX})}{\subseteq} 
Y
& \mbox{ if $\accentset{\bullet}a=x\in \accentset{\bullet}H^{\scaleto{(\mathbf X)}{4pt}}_u$ and $\Phi(u)\in\kappa_I^{\scaleto{(\mathbf Y)}{4pt}}$}.\\
\end{array}
\right. 
\end{equation}

\bigskip
\noindent
\ref{KATEGOR_OrderINGresZe}:
Let $x,y\in X$.
Then $x\in L^{\scaleto{(\mathbf X)}{4pt}}_u$ and $y\in L^{\scaleto{(\mathbf X)}{4pt}}_v$ for some $u,v\in\kappa^{\scaleto{(\mathbf X)}{4pt}}$, c.f.\ (\ref{KATEGOR_EZazX}).
Assume  $x<^{\scaleto{(\mathbf X)}{4pt}} y$. 
By (\ref{KATEGOR_RendeZesINNOVATIVAN}), 
it 
is equivalent to
\begin{equation}\label{KATEGOR_RendeZesINNOVATIVANelsoNEMKELLb}
\mbox{
$
\rho^{\scaleto{(\mathbf X)}{4pt}}_{uv}(x)
<^{\scaleto{(\mathbf X)}{4pt}}_{uv}\rho^{\scaleto{(\mathbf X)}{4pt}}_{uv}(y)$ or 
$\rho^{\scaleto{(\mathbf X)}{4pt}}_{uv}(x)=\rho^{\scaleto{(\mathbf X)}{4pt}}_{uv}(y)$ and $u<_{\kappa^{\scaleto{(\mathbf X)}{4pt}}} v$
}
,
\end{equation}
where
by (\ref{KATEGOR_P5}),
\begin{equation}\label{ezVOLT} 
\begin{array}{ll}
\rho^{\scaleto{(\mathbf X)}{4pt}}_{uv}(x)
=
\left\{
\begin{array}{ll}
x & \mbox{ if $x\in L^{\scaleto{(\mathbf X)}{4pt}}_u$ and $u\geq_{\kappa^{\scaleto{(\mathbf X)}{3pt}}} v$}\\
\varsigma^{\scaleto{(\mathbf X)}{4pt}}_{u\to uv}(\gamma^{\scaleto{(\mathbf X)}{4pt}}_u(x))) & \mbox{ otherwise}\\
\end{array}
\right.
,
\\
\rho^{\scaleto{(\mathbf X)}{4pt}}_{uv}(y)
=
\left\{
\begin{array}{ll}
y & \mbox{ if $y\in L^{\scaleto{(\mathbf X)}{4pt}}_v$ and $v\geq_{\kappa^{\scaleto{(\mathbf X)}{3pt}}} u$}\\
\varsigma^{\scaleto{(\mathbf X)}{4pt}}_{v\to uv}(\gamma^{\scaleto{(\mathbf X)}{4pt}}_v(y))) & \mbox{ otherwise}\\
\end{array}
\right.
.
\end{array}
\end{equation}
We need to prove
$\varphi(x)\leq^{\scaleto{(\mathbf Y)}{4pt}}\varphi(y)$ which,
referring to 
$
\Phi(u)\Phi(v)
\overset{\ref{KATEGOR_embedding}}{=}
\Phi(uv)
$
is equivalent to 
\begin{equation}\label{KATEGOR_RendeZesINNOVATIVANmasodikNEMKELLc}
\mbox{
$\rho^{\scaleto{(\mathbf Y)}{4pt}}_{\Phi(uv)}(\varphi(x))<^{\scaleto{(\mathbf Y)}{4pt}}_{\Phi(uv)}\rho^{\scaleto{(\mathbf Y)}{4pt}}_{\Phi(uv)}(\varphi(y))$ 
or 
$\rho^{\scaleto{(\mathbf Y)}{4pt}}_{\Phi(uv)}(\varphi(x))=\rho^{\scaleto{(\mathbf Y)}{4pt}}_{\Phi(uv)}(\varphi(y))$ and 
$\Phi(u)\leq^{\scaleto{(\mathbf Y)}{4pt}}_{\kappa^{\scaleto{(\mathbf Y)}{4pt}}} \Phi(v)$
}
,
\end{equation}
c.f.\ (\ref{KATEGOR_RendeZesINNOVATIVAN}).
We claim that for $x,y\in X$, 
\begin{equation}\label{ezLETT}
\begin{array}{ll}
\begin{array}{ccl}
\rho^{\scaleto{(\mathbf Y)}{4pt}}_{\Phi(uv)}(\varphi(x))
=
\left\{
\begin{array}{lll}
\accentset{\bullet}{\Phi_{uv}(\varsigma_{u\to uv}^{\scaleto{(\mathbf X)}{4pt}}(\gamma^{\scaleto{(\mathbf X)}{4pt}}_u(x)))} & \mbox{ if $x\in \accentset{\bullet}H^{\scaleto{(\mathbf X)}{4pt}}_u$,}   & \mbox{ $\kappa_I^{\scaleto{(\mathbf Y)}{4pt}}\ni\Phi(u)\geq_{\kappa^{\scaleto{(\mathbf Y)}{3pt}}} \Phi(v)$}\\
\Phi_{uv}(\varsigma_{u\to uv}^{\scaleto{(\mathbf X)}{4pt}}(\gamma^{\scaleto{(\mathbf X)}{4pt}}_u(x)))
&
\mbox{ otherwise}\\
\end{array}
\right.
,
\end{array}
\\
\\
\begin{array}{ccl}
\rho^{\scaleto{(\mathbf Y)}{4pt}}_{\Phi(uv)}(\varphi(y))
=
\left\{
\begin{array}{lll}
\accentset{\bullet}{\Phi_{uv}(\varsigma_{v\to uv}^{\scaleto{(\mathbf X)}{4pt}}(\gamma^{\scaleto{(\mathbf X)}{4pt}}_v(y)))} & \mbox{ if $y\in \accentset{\bullet}H^{\scaleto{(\mathbf X)}{4pt}}_v$,}   & \mbox{ $\kappa_I^{\scaleto{(\mathbf Y)}{4pt}}\ni\Phi(v)\geq_{\kappa^{\scaleto{(\mathbf Y)}{3pt}}} \Phi(u)$}\\
\Phi_{uv}(\varsigma_{v\to uv}^{\scaleto{(\mathbf X)}{4pt}}(\gamma^{\scaleto{(\mathbf X)}{4pt}}_v(y)))
&
\mbox{ otherwise}\\
\end{array}
\right.
\end{array}
.
\end{array}
\end{equation}
We shall prove only the first one, the other proof is analogous.
For
$x\in L^{\scaleto{(\mathbf X)}{4pt}}_u$,
$$
\color{midgrey}
\small
\begin{array}{ccl}
\color{black}
\rho^{\scaleto{(\mathbf Y)}{4pt}}_{\Phi(uv)}(\varphi(x))
&
\color{black}
\overset{(\ref{KATEGOR_P5})}{=}
&
\color{black}
\left\{
\begin{array}{ll}
\varsigma^{\scaleto{(\mathbf Y)}{4pt}}_{\Phi(u)\to \Phi(v)}(
\gamma^{\scaleto{(\mathbf Y)}{4pt}}_{\Phi(u)}(\varphi(x))
)
& 
\mbox{ if 
$\Phi(u)<_{\kappa^{\scaleto{(\mathbf Y)}{3pt}}} \Phi(v)$}\\
\varphi(x)
&
\mbox{ if 
$\Phi(u)\geq_{\kappa^{\scaleto{(\mathbf Y)}{3pt}}} \Phi(v)$}
\end{array}
\right.
\\
&
\color{black}
\overset{(\ref{KATEGOR_ViSSZaEpul})}{=}
&
\left\{
\begin{array}{ll}
\varsigma^{\scaleto{(\mathbf Y)}{4pt}}_{\Phi(u)\to \Phi(v)}(
\color{black}
\Phi_u(
\gamma^{\scaleto{(\mathbf X)}{4pt}}_u(x))
\color{midgrey}
)
& 
\mbox{ if 
$\Phi(u)<_{\kappa^{\scaleto{(\mathbf Y)}{3pt}}} \Phi(v)$}\\
\color{black} \accentset{\bullet}{\Phi_u(\gamma^{\scaleto{(\mathbf X)}{4pt}}_u(x)))} & 
\mbox{ \color{black} if $x\in \accentset{\bullet}H^{\scaleto{(\mathbf X)}{4pt}}_u$, 
\color{black}$\kappa_I^{\scaleto{(\mathbf Y)}{4pt}}\ni\color{midgrey}\Phi(u)\geq_{\kappa^{\scaleto{(\mathbf Y)}{3pt}}} \Phi(v)$}\\\color{black} 
\Phi_u(
\gamma^{\scaleto{(\mathbf X)}{4pt}}_u(x))
)
 & \mbox{ {\color{black} otherwise}}\\
\end{array}
\right.
\\
&
\color{black}
\overset{\ref{KATEGOR_commutes},\,(\ref{KATEGOR_ViSSZaEpulCALC})}{=}
&
\left\{
\begin{array}{ll}
\color{black}
\Phi_v(\varsigma_{u\to v}^{\scaleto{(\mathbf X)}{4pt}}(
\color{midgrey}
\gamma^{\scaleto{(\mathbf X)}{4pt}}_u(x)
\color{black}
)))
&
\mbox{ \color{midgrey}if 
$\Phi(u)<_{\kappa^{\scaleto{(\mathbf Y)}{3pt}}} \Phi(v)$}
\\
\accentset{\bullet}{\Phi_u(
\gamma^{\scaleto{(\mathbf X)}{4pt}}_u(x))
)
}
& \mbox{ \color{midgrey}
if $x\in \accentset{\bullet}H^{\scaleto{(\mathbf X)}{4pt}}_u$,  $\kappa_I^{\scaleto{(\mathbf Y)}{4pt}}\ni\Phi(u)\geq_{\kappa^{\scaleto{(\mathbf Y)}{3pt}}} \Phi(v)$}\\\Phi_u(
\gamma^{\scaleto{(\mathbf X)}{4pt}}_u(x))
)
 & \mbox{ \color{midgrey}
otherwise}\\
\end{array}
\right.
\end{array}
.
$$
In the first row, $u<_{\kappa^{\scaleto{(\mathbf X)}{3pt}}}v$ follows from 
$\Phi(u)<_{\kappa^{\scaleto{(\mathbf Y)}{3pt}}} \Phi(v)$
by \ref{KATEGOR_embedding}, hence $v=uv$ and thus we may write the value in that row as 
$\Phi_{uv}(\varsigma_{u\to uv}^{\scaleto{(\mathbf X)}{4pt}}(\gamma^{\scaleto{(\mathbf X)}{4pt}}_u(x)))$.
Consider the last two rows.
If $u\geq_{\kappa^{\scaleto{(\mathbf X)}{3pt}}}v$ then $u=uv$
and thus 
by \ref{IDes}
we may write the values as
$
\accentset{\bullet}{
\Phi_{uv}(\varsigma_{u\to uv}^{\scaleto{(\mathbf X)}{4pt}}(\gamma^{\scaleto{(\mathbf X)}{4pt}}_u(x)))
}
$
and
$\Phi_{uv}(\varsigma_{u\to uv}^{\scaleto{(\mathbf X)}{4pt}}(\gamma^{\scaleto{(\mathbf X)}{4pt}}_u(x)))$,
respectively.
Finally, if $u<_{\kappa^{\scaleto{(\mathbf X)}{3pt}}}v$ then 
$\Phi(u)\leq_{\kappa^{\scaleto{(\mathbf Y)}{3pt}}} \Phi(v)$ holds
by \ref{KATEGOR_embedding}, hence we obtain
$\Phi(u)=\Phi(v)$.
Therefore, $\Phi_u=\Phi_v\circ\varsigma_{u\to v}^{\scaleto{(\mathbf X)}{4pt}}$
holds by \ref{KATEGOR_commutes},
and thus also here 
we can write the values as
$
\accentset{\bullet}{
\Phi_{uv}(\varsigma_{u\to uv}^{\scaleto{(\mathbf X)}{4pt}}(\gamma^{\scaleto{(\mathbf X)}{4pt}}_u(x)))
}
$
and
$\Phi_{uv}(\varsigma_{u\to uv}^{\scaleto{(\mathbf X)}{4pt}}(\gamma^{\scaleto{(\mathbf X)}{4pt}}_u(x)))$,
respectively.
Summing up, (\ref{ezLETT}) holds.
Consequently,
\begin{equation}\label{Atugorja}
\begin{array}{r}
\gamma^{\scaleto{(\mathbf Y)}{4pt}}_{\Phi(uv)}(\rho^{\scaleto{(\mathbf Y)}{4pt}}_{\Phi(uv)}(\varphi(x)))
=
\Phi_{uv}(\varsigma_{u\to uv}^{\scaleto{(\mathbf X)}{4pt}}(\gamma^{\scaleto{(\mathbf X)}{4pt}}_u(x))),
\\
\gamma^{\scaleto{(\mathbf Y)}{4pt}}_{\Phi(uv)}(\rho^{\scaleto{(\mathbf Y)}{4pt}}_{\Phi(uv)}(\varphi(y)))
=
\Phi_{uv}(\varsigma_{v\to uv}^{\scaleto{(\mathbf X)}{4pt}}(\gamma^{\scaleto{(\mathbf X)}{4pt}}_v(y)))
.
\end{array}
\end{equation}

\begin{enumerate}[I)]
\item
Assume $u\leq_{\kappa^{\scaleto{(\mathbf X)}{3pt}}} v$.
\\
By \ref{KATEGOR_embedding}, $\Phi(u)\leq_{\kappa^{\scaleto{(\mathbf Y)}{3pt}}} \Phi(v)$ follows.
Therefore, referring to (\ref{KATEGOR_RendeZesINNOVATIVANmasodikNEMKELLc}),
we need to prove 
$
\rho^{\scaleto{(\mathbf Y)}{4pt}}_{\Phi(v)}(\varphi(x))
\leq^{\scaleto{(\mathbf Y)}{4pt}}_{\Phi(v)}
\rho^{\scaleto{(\mathbf Y)}{4pt}}_{\Phi(v)}(\varphi(y))
$.
By (\ref{KATEGOR_KibovitettRendezesITTIS55}), (\ref{KATEGOR_ViSSZaEpulCALC}),  
and (\ref{Atugorja})
it is equivalent to
\begin{equation}\label{EzkellettTEHAT}
\small
\begin{array}{cc}
\Phi_{v}(\varsigma_{u\to v}^{\scaleto{(\mathbf X)}{4pt}}(\gamma^{\scaleto{(\mathbf X)}{4pt}}_u(x)))
\prec^{\scaleto{(\mathbf Y)}{4pt}}_{\Phi(uv)}
\Phi_{v}(\gamma^{\scaleto{(\mathbf X)}{4pt}}_v(y))
\mbox{ or}
\\
\left(
\Phi(v)\in\kappa_I,
\mbox{
$
\Phi_{v}(\varsigma_{u\to v}^{\scaleto{(\mathbf X)}{4pt}}(\gamma^{\scaleto{(\mathbf X)}{4pt}}_u(x)))
=
\Phi_{v}(\gamma^{\scaleto{(\mathbf X)}{4pt}}_v(y))
$
and
$
\left(
\mbox{
$\rho^{\scaleto{(\mathbf Y)}{4pt}}_{\Phi(v)}(\varphi(x))\in\accentset{\bullet}H^{\scaleto{(\mathbf X)}{4pt}}_{\Phi(v)}$
or
$
\rho^{\scaleto{(\mathbf Y)}{4pt}}_{\Phi(v)}(
\varphi(y)
)
\in
G^{\scaleto{(\mathbf Y)}{4pt}}_{\Phi(v)}
$
}\right)$
}\right)
.
\end{array}
\end{equation}
Assume
$u<_{\kappa^{\scaleto{(\mathbf X)}{3pt}}} v$.
By (\ref{ezVOLT}),
(\ref{KATEGOR_RendeZesINNOVATIVANelsoNEMKELLb})
is equivalent to 
$
\varsigma^{\scaleto{(\mathbf X)}{4pt}}_{u\to v}(\gamma^{\scaleto{(\mathbf X)}{4pt}}_u(x))
\leq^{\scaleto{(\mathbf X)}{4pt}}_{v}
y
.
$
\begin{enumerate}[a)]
\item 
If  
$y\in \accentset{\bullet}{H^{\scaleto{(\mathbf X)}{4pt}}_v}$
for  
$
v
\in
\kappa^{\scaleto{(\mathbf X)}{3pt}}_I$ 
and
$\Phi(v)\in\kappa_I^{\scaleto{(\mathbf Y)}{4pt}}$
then
$
\varsigma^{\scaleto{(\mathbf X)}{4pt}}_{u\to v}(\gamma^{\scaleto{(\mathbf X)}{4pt}}_u(x))
\leq^{\scaleto{(\mathbf X)}{4pt}}_{v}
y
\overset{(\ref{DEFgamma}),\,(\ref{KATEGOR_KibovitettRendezesITTIS})}{<^{\scaleto{(\mathbf X)}{4pt}}_{v}}
\gamma^{\scaleto{(\mathbf X)}{4pt}}_v(y)
\overset{(\ref{DEFgamma})}{\in}
H^{\scaleto{(\mathbf X)}{4pt}}_v
$
holds, hence
$
\varsigma^{\scaleto{(\mathbf X)}{4pt}}_{u\to v}(\gamma^{\scaleto{(\mathbf X)}{4pt}}_u(x))
\overset{(\ref{KATEGOR_KibovitettRendezesITTIS})}{\prec^{\scaleto{(\mathbf X)}{4pt}}_{v}}
\gamma^{\scaleto{(\mathbf X)}{4pt}}_v(y)
\overset{\ref{IDes}}{=}
\varsigma^{\scaleto{(\mathbf X)}{4pt}}_{v\to v}(\gamma^{\scaleto{(\mathbf X)}{4pt}}_v(y))
\in
H^{\scaleto{(\mathbf X)}{4pt}}_v
$
follows
yielding
$
\Phi_v(
\varsigma^{\scaleto{(\mathbf X)}{4pt}}_{u\to v}(\gamma^{\scaleto{(\mathbf X)}{4pt}}_u(x))
)
\overset{\ref{KATEGOR_injective}}{\prec^{\scaleto{(\mathbf Y)}{4pt}}_{\Phi(v)}}
\Phi_v(
\gamma^{\scaleto{(\mathbf X)}{4pt}}_v(y)
)
$
and in turn (\ref{EzkellettTEHAT}).

\item 
If $y\in G^{\scaleto{(\mathbf X)}{4pt}}_v$
or 
($y\in \accentset{\bullet}{H^{\scaleto{(\mathbf X)}{4pt}}_v}$
for  
$
v
\in
\kappa^{\scaleto{(\mathbf X)}{3pt}}_I
$ 
and
$\Phi(v)\in\kappa_o^{\scaleto{(\mathbf Y)}{4pt}}$)
then
$
G^{\scaleto{(\mathbf X)}{4pt}}_v
\overset{(\ref{tipusSIGMA})}{\ni}
\varsigma^{\scaleto{(\mathbf X)}{4pt}}_{u\to v}(\gamma^{\scaleto{(\mathbf X)}{4pt}}_u(x))
\leq^{\scaleto{(\mathbf X)}{4pt}}_{v}
y
\overset{(\ref{DEFgamma})}{\leq}
\gamma^{\scaleto{(\mathbf X)}{4pt}}_v(y)
\in
G^{\scaleto{(\mathbf X)}{4pt}}_v
$
implies
$
\varsigma^{\scaleto{(\mathbf X)}{4pt}}_{u\to v}(\gamma^{\scaleto{(\mathbf X)}{4pt}}_u(x))
\overset{(\ref{KATEGOR_KibovitettRendezesITTIS})}{\preceq^{\scaleto{(\mathbf X)}{4pt}}_{v}}
\gamma^{\scaleto{(\mathbf X)}{4pt}}_v(y)
$
and hence
$
\Phi_v(\varsigma^{\scaleto{(\mathbf X)}{4pt}}_{u\to v}(\gamma^{\scaleto{(\mathbf X)}{4pt}}_u(x)))
\overset{\ref{KATEGOR_homomorfizmus}}{\preceq^{\scaleto{(\mathbf Y)}{4pt}}_{\Phi(v)}}
\Phi_v(\gamma^{\scaleto{(\mathbf X)}{4pt}}_v(y))
$,
which together with 
$
\varphi(y)
\overset{(\ref{KATEGOR_ViSSZaEpulCALC})}{\in}
G^{\scaleto{(\mathbf Y)}{4pt}}_{\Phi(v)}
$
and in turn,
$
\rho^{\scaleto{(\mathbf Y)}{4pt}}_{\Phi(v)}(\varphi(y))
\overset{(\ref{KATEGOR_P5})}{\in}
G^{\scaleto{(\mathbf Y)}{4pt}}_{\Phi(v)}
$
ensures (\ref{EzkellettTEHAT}).
\end{enumerate}
Assume
$u=v$.
By (\ref{ezVOLT}),
(\ref{KATEGOR_RendeZesINNOVATIVANelsoNEMKELLb})
is equivalent to
$
x
<^{\scaleto{(\mathbf X)}{4pt}}_{u}
y
$,
which, by (\ref{KATEGOR_KibovitettRendezesITTIS66})
is equivalent to
\begin{equation}\label{sd8758jhGJ}
\mbox{
$\gamma^{\scaleto{(\mathbf X)}{4pt}}_u(x)
\prec_u 
\gamma^{\scaleto{(\mathbf X)}{4pt}}_u(y)$
or 
$
(
\gamma^{\scaleto{(\mathbf X)}{4pt}}_u(x)=\gamma^{\scaleto{(\mathbf X)}{4pt}}_u(y)
$, 
$u\in\kappa^{\scaleto{(\mathbf Y)}{4pt}}_I$,
$x\in\accentset{\bullet}H^{\scaleto{(\mathbf X)}{4pt}}_u$,
$y\in G^{\scaleto{(\mathbf X)}{4pt}}_u
)
$

}
.
\end{equation}
\begin{enumerate}[a)]
\item 
If $y\in G^{\scaleto{(\mathbf X)}{4pt}}_u$
then
$
\rho^{\scaleto{(\mathbf Y)}{4pt}}_{\Phi(u)}(\varphi(y))
\overset{(\ref{ezLETT})}{=}
\Phi_{u}(\gamma^{\scaleto{(\mathbf X)}{4pt}}_u(y))
\overset{\ref{KATEGOR_homomorfizmus}}{\in}
G^{\scaleto{(\mathbf Y)}{4pt}}_{\Phi(u)}
$
holds, and hence
(\ref{EzkellettTEHAT}) is equivalent to
$
\Phi_{u}(\gamma^{\scaleto{(\mathbf X)}{4pt}}_u(x))
\preceq^{\scaleto{(\mathbf Y)}{4pt}}_{\Phi(u)}
\Phi_{u}(\gamma^{\scaleto{(\mathbf X)}{4pt}}_u(y))
$
which follows from (\ref{sd8758jhGJ}) by \ref{KATEGOR_homomorfizmus}.
\item
Therefore, it remains to assume $y\in\accentset{\bullet}H^{\scaleto{(\mathbf X)}{4pt}}_u$ for $u\in\kappa^{\scaleto{(\mathbf Y)}{4pt}}_I$,
in which case
(\ref{sd8758jhGJ}) is equivalent to
$
\gamma^{\scaleto{(\mathbf X)}{4pt}}_u(x)
\prec_u 
\gamma^{\scaleto{(\mathbf X)}{4pt}}_u(y)
\overset{(\ref{DEFgamma})}{\in}
H^{\scaleto{(\mathbf X)}{4pt}}_u
$.
Therefore, 
if $\Phi(u)\in \kappa_I^{\scaleto{(\mathbf Y)}{4pt}}$ then 
$\Phi_{u}(\gamma^{\scaleto{(\mathbf X)}{4pt}}_u(x))
\overset{\ref{KATEGOR_injective}}{\prec^{\scaleto{(\mathbf Y)}{4pt}}_{\Phi(u)}}
\Phi_{u}(\gamma^{\scaleto{(\mathbf X)}{4pt}}_u(y))
$
holds and hence so does (\ref{EzkellettTEHAT}),
whereas
if $\Phi(u)\in \kappa_o^{\scaleto{(\mathbf Y)}{4pt}}$ then 
$
\Phi_v(\gamma^{\scaleto{(\mathbf X)}{4pt}}_u(x))
\overset{\ref{KATEGOR_homomorfizmus}}{\preceq^{\scaleto{(\mathbf Y)}{4pt}}_{\Phi(v)}}
\Phi_v(\gamma^{\scaleto{(\mathbf X)}{4pt}}_v(y))
$
together with 
$
\varphi(y)
\overset{(\ref{KATEGOR_ViSSZaEpulCALC})}{\in}
G^{\scaleto{(\mathbf Y)}{4pt}}_{\Phi(v)}
$
and in turn,
$
\rho^{\scaleto{(\mathbf Y)}{4pt}}_{\Phi(v)}(\varphi(y))
\overset{(\ref{KATEGOR_P5})}{\in}
G^{\scaleto{(\mathbf Y)}{4pt}}_{\Phi(v)}
$
ensures (\ref{EzkellettTEHAT}).
\end{enumerate}

\item 
Assume
$u>_{\kappa^{\scaleto{(\mathbf X)}{3pt}}} v$.
\\
By \ref{KATEGOR_embedding}, $\Phi(u)\geq_{\kappa^{\scaleto{(\mathbf Y)}{3pt}}} \Phi(v)$ holds.
By (\ref{ezVOLT}),
(\ref{KATEGOR_RendeZesINNOVATIVANelsoNEMKELLb})
is equivalent to
$$
x
<^{\scaleto{(\mathbf X)}{4pt}}_{uv}
\varsigma^{\scaleto{(\mathbf X)}{4pt}}_{v\to uv}(\gamma^{\scaleto{(\mathbf X)}{4pt}}_v(y))
.
$$

\begin{enumerate}[a)]
\item 
Assume $x\in G^{\scaleto{(\mathbf X)}{4pt}}_u$.   
\\
Then 
$
G^{\scaleto{(\mathbf X)}{4pt}}_{uv}\ni
\varsigma^{\scaleto{(\mathbf X)}{4pt}}_{u\to uv}(\gamma^{\scaleto{(\mathbf X)}{4pt}}_{u}(x))
<^{\scaleto{(\mathbf X)}{4pt}}_{uv}
\varsigma^{\scaleto{(\mathbf X)}{4pt}}_{v\to uv}(\gamma^{\scaleto{(\mathbf X)}{4pt}}_v(y))
\in G^{\scaleto{(\mathbf X)}{4pt}}_{uv}
$
follows by (\ref{DEFgamma}) and \ref{IDes}, 
hence by (\ref{KATEGOR_KibovitettRendezesITTIS}) it holds true that
$$
\varsigma^{\scaleto{(\mathbf X)}{4pt}}_{u\to uv}(\gamma^{\scaleto{(\mathbf X)}{4pt}}_{u}(x))
\prec^{\scaleto{(\mathbf X)}{4pt}}_{uv}
\varsigma^{\scaleto{(\mathbf X)}{4pt}}_{v\to uv}(\gamma^{\scaleto{(\mathbf X)}{4pt}}_v(y))
.
$$ 

- If $\Phi(u)\notin \kappa_I^{\scaleto{(\mathbf Y)}{4pt}}$
then
$
\rho^{\scaleto{(\mathbf Y)}{4pt}}_{\Phi(uv)}(\varphi(x))
\overset{(\ref{ezLETT})}{=}
\Phi_{uv}(\varsigma^{\scaleto{(\mathbf X)}{4pt}}_{u\to uv}(\gamma^{\scaleto{(\mathbf X)}{4pt}}_{u}(x)))
\overset{\ref{KATEGOR_homomorfizmus}}{\preceq^{\scaleto{(\mathbf Y)}{4pt}}_{\Phi(uv)}}
\Phi_{uv}(\varsigma^{\scaleto{(\mathbf X)}{4pt}}_{v\to uv}(\gamma^{\scaleto{(\mathbf X)}{4pt}}_v(y)))
$
follows.
The latest is equal to 
$
\rho^{\scaleto{(\mathbf Y)}{4pt}}_{\Phi(uv)}(\varphi(y))
$
by (\ref{ezLETT}) since 
$\kappa_I^{\scaleto{(\mathbf Y)}{4pt}}\ni\Phi(v)\geq_{\kappa^{\scaleto{(\mathbf Y)}{3pt}}} \Phi(u)$
would imply
$\kappa_I^{\scaleto{(\mathbf Y)}{4pt}}\ni\Phi(v)=\Phi(u)\notin\kappa_I^{\scaleto{(\mathbf Y)}{4pt}}$, a contradiction.
\\
- If $\Phi(u)\in \kappa_I^{\scaleto{(\mathbf Y)}{4pt}}$
then
$
\rho^{\scaleto{(\mathbf Y)}{4pt}}_{\Phi(uv)}(\varphi(x))
\overset{(\ref{ezLETT})}{=}
\Phi_{uv}(\varsigma^{\scaleto{(\mathbf X)}{4pt}}_{u\to uv}(\gamma^{\scaleto{(\mathbf X)}{4pt}}_{u}(x)))
\overset{\ref{KATEGOR_injective}}{\prec^{\scaleto{(\mathbf Y)}{4pt}}_{\Phi(uv)}}
\Phi_{uv}(\varsigma^{\scaleto{(\mathbf X)}{4pt}}_{v\to uv}(\gamma^{\scaleto{(\mathbf X)}{4pt}}_v(y)))
$
follows
yielding
$
\rho^{\scaleto{(\mathbf Y)}{4pt}}_{\Phi(uv)}(\varphi(x))
\overset{(\ref{KATEGOR_KibovitettRendezesITTIS})}{<^{\scaleto{(\mathbf Y)}{4pt}}_{\Phi(uv)}}
\Phi_{uv}(\varsigma^{\scaleto{(\mathbf X)}{4pt}}_{v\to uv}(\gamma^{\scaleto{(\mathbf X)}{4pt}}_v(y)))
$.
If $\Phi(u)>_{\kappa^{\scaleto{(\mathbf Y)}{3pt}}} \Phi(v)$ then the latest is equal to
$
\rho^{\scaleto{(\mathbf Y)}{4pt}}_{\Phi(uv)}(\varphi(y))
$
by (\ref{ezLETT}),
thus
(\ref{KATEGOR_RendeZesINNOVATIVANmasodikNEMKELLc}) holds,
whereas if $\Phi(u)=\Phi(v)$ then 
since by (\ref{ezLETT}), $\rho^{\scaleto{(\mathbf Y)}{4pt}}_{\Phi(uv)}(\varphi(y))$ is equal to either
$\Phi_{uv}(\varsigma^{\scaleto{(\mathbf X)}{4pt}}_{v\to uv}(\gamma^{\scaleto{(\mathbf X)}{4pt}}_v(y)))$
or
$\accentset{\bullet}{\Phi_{uv}(\varsigma^{\scaleto{(\mathbf X)}{4pt}}_{v\to uv}(\gamma^{\scaleto{(\mathbf X)}{4pt}}_v(y)))}$, and since by (\ref{KATEGOR_KibovitettRendezesITTIS}) the latter is the lower cover of the former,
$
\rho^{\scaleto{(\mathbf Y)}{4pt}}_{\Phi(uv)}(\varphi(x))
\leq^{\scaleto{(\mathbf Y)}{4pt}}_{\Phi(uv)}
\rho^{\scaleto{(\mathbf Y)}{4pt}}_{\Phi(uv)}(\varphi(y))
$ 
follows, hence
(\ref{KATEGOR_RendeZesINNOVATIVANmasodikNEMKELLc}) holds.

\item 
Assume $x\in \accentset{\bullet}H^{\scaleto{(\mathbf X)}{4pt}}_u$
for $u\in\kappa_I^{\scaleto{(\mathbf X)}{4pt}}$.
\\
Then
$\Phi(u)\overset{\ref{KATEGOR_particio}}{\in}\kappa^{\scaleto{(\mathbf Y)}{4pt}}_I\cup\,\kappa^{\scaleto{(\mathbf Y)}{4pt}}_o$. 
From
$
x
<^{\scaleto{(\mathbf X)}{4pt}}_{uv}
\varsigma^{\scaleto{(\mathbf X)}{4pt}}_{v\to uv}(\gamma^{\scaleto{(\mathbf X)}{4pt}}_v(y))
$,
$
G^{\scaleto{(\mathbf X)}{4pt}}_{uv}
\overset{(\ref{tipusSIGMA})}{\ni}
\varsigma^{\scaleto{(\mathbf X)}{4pt}}_{u\to uv}(\gamma^{\scaleto{(\mathbf X)}{4pt}}_{u}(x))
\overset{(\ref{DEFgamma})}{\leq^{\scaleto{(\mathbf X)}{4pt}}_{uv}} 
\varsigma^{\scaleto{(\mathbf X)}{4pt}}_{v\to uv}(\gamma^{\scaleto{(\mathbf X)}{4pt}}_v(y))
\overset{(\ref{tipusSIGMA})}{\in}
G^{\scaleto{(\mathbf X)}{4pt}}_{uv}
$
and in turn,
$
\varsigma^{\scaleto{(\mathbf X)}{4pt}}_{u\to uv}(\gamma^{\scaleto{(\mathbf X)}{4pt}}_{u}(x))
\preceq^{\scaleto{(\mathbf X)}{4pt}}_{uv}
\varsigma^{\scaleto{(\mathbf X)}{4pt}}_{v\to uv}(\gamma^{\scaleto{(\mathbf X)}{4pt}}_v(y))
$
follows by (\ref{KATEGOR_KibovitettRendezesITTIS}).
Hence by \ref{KATEGOR_homomorfizmus}
$
\Phi_{uv}(\varsigma^{\scaleto{(\mathbf X)}{4pt}}_{u\to uv}(\gamma^{\scaleto{(\mathbf X)}{4pt}}_{u}(x)))
\preceq^{\scaleto{(\mathbf Y)}{4pt}}_{\Phi(uv)}
\Phi_{uv}(\varsigma^{\scaleto{(\mathbf X)}{4pt}}_{v\to uv}(\gamma^{\scaleto{(\mathbf X)}{4pt}}_v(y)))
$
follows.
Therefore,
by 
(\ref{KATEGOR_KibovitettRendezesITTIS}),
\begin{equation}\label{Kiindulas}
\Phi_{uv}(\varsigma^{\scaleto{(\mathbf X)}{4pt}}_{u\to uv}(\gamma^{\scaleto{(\mathbf X)}{4pt}}_{u}(x)))
\leq^{\scaleto{(\mathbf Y)}{4pt}}_{\Phi(uv)}
\Phi_{uv}(\varsigma^{\scaleto{(\mathbf X)}{4pt}}_{v\to uv}(\gamma^{\scaleto{(\mathbf X)}{4pt}}_v(y)))
.
\end{equation}
\begin{enumerate}[b1)]
\item 
Assume $\Phi(u)=\Phi(v)$.
Then (\ref{KATEGOR_RendeZesINNOVATIVANmasodikNEMKELLc}) is equivalent to
\begin{equation}\label{75JHCJK7Nl}
\rho^{\scaleto{(\mathbf Y)}{4pt}}_{\Phi(u)}(\varphi(x))
\leq^{\scaleto{(\mathbf Y)}{4pt}}_{\Phi(u)}\rho^{\scaleto{(\mathbf Y)}{4pt}}_{\Phi(u)}(\varphi(y)).
\end{equation}
It is equivalent to (\ref{Kiindulas})
by (\ref{ezLETT})
if 
$(\Phi(v)=)\Phi(u)\in\kappa_o^{\scaleto{(\mathbf Y)}{4pt}}$.
Assume
$(\Phi(v)=)\Phi(u)\in\kappa_I^{\scaleto{(\mathbf Y)}{4pt}}$.
Since $u>_{\kappa^{\scaleto{(\mathbf X)}{3pt}}} v$, it holds true that 
$\Phi_{uv}(\varsigma^{\scaleto{(\mathbf X)}{4pt}}_{v\to uv}(\gamma^{\scaleto{(\mathbf X)}{4pt}}_v(y)))
\overset{\ref{KATEGOR_G2},\ref{KATEGOR_homomorfizmus}}{\in}
{H^{\scaleto{(\mathbf Y)}{4pt}}_{\Phi(uv)}}
$.
Either equality holds in (\ref{Kiindulas}) and then 
$
\accentset{\bullet}
{\Phi_{uv}(\varsigma^{\scaleto{(\mathbf X)}{4pt}}_{u\to uv}(\gamma^{\scaleto{(\mathbf X)}{4pt}}_{u}(x)))}
=
\accentset{\bullet}
{\Phi_{uv}(\varsigma^{\scaleto{(\mathbf X)}{4pt}}_{v\to uv}(\gamma^{\scaleto{(\mathbf X)}{4pt}}_v(y)))}
$
follows from (\ref{Kiindulas}),
or there is strict inequality in (\ref{Kiindulas}) and then 
$
{\Phi_{uv}(\varsigma^{\scaleto{(\mathbf X)}{4pt}}_{u\to uv}(\gamma^{\scaleto{(\mathbf X)}{4pt}}_{u}(x)))}
\leq^{\scaleto{(\mathbf Y)}{4pt}}_{\Phi(uv)}
\accentset{\bullet}
{\Phi_{uv}(\varsigma^{\scaleto{(\mathbf X)}{4pt}}_{v\to uv}(\gamma^{\scaleto{(\mathbf X)}{4pt}}_v(y)))}
$
follows from (\ref{Kiindulas}).
In both cases, 
(\ref{ezLETT}) shows that (\ref{75JHCJK7Nl}) holds.

\item 
Assume 
$\Phi(u)>_{\kappa^{\scaleto{(\mathbf Y)}{3pt}}} \Phi(v)$.
Then
$\Phi(u)\in\kappa_I^{\scaleto{(\mathbf Y)}{4pt}}$
since
$
\kappa_o^{\scaleto{(\mathbf Y)}{4pt}}
\ni
\Phi(u)
\overset{Table~\ref{KATEGOR_ThetaPsiOmegaAGAIN}}{=}
t^{\scaleto{(\mathbf Y)}{4pt}}
>_{\kappa^{\scaleto{(\mathbf Y)}{3pt}}} \Phi(v)
$
is impossible.
Since $\Phi(u)>_{\kappa^{\scaleto{(\mathbf Y)}{3pt}}} \Phi(v)$,
(\ref{KATEGOR_RendeZesINNOVATIVANmasodikNEMKELLc}) is equivalent to
\begin{equation}\label{75JHCJK7Nl2}
\rho^{\scaleto{(\mathbf Y)}{4pt}}_{\Phi(u)}(\varphi(x))
<^{\scaleto{(\mathbf Y)}{4pt}}_{\Phi(u)}\rho^{\scaleto{(\mathbf Y)}{4pt}}_{\Phi(u)}(\varphi(y)).
\end{equation}
Since 
$u>_{\kappa^{\scaleto{(\mathbf X)}{3pt}}} v$
and
$x\in \accentset{\bullet}H^{\scaleto{(\mathbf X)}{4pt}}_u$,
$
\varsigma^{\scaleto{(\mathbf X)}{4pt}}_{u\to uv}(\gamma^{\scaleto{(\mathbf X)}{4pt}}_{u}(x))
\overset{\ref{IDes}}{=}
\gamma^{\scaleto{(\mathbf X)}{4pt}}_{u}(x)
\overset{(\ref{DEFgamma})}{\in}
H^{\scaleto{(\mathbf X)}{4pt}}_{uv}$,
and hence
$
\Phi_{uv}(\varsigma^{\scaleto{(\mathbf X)}{4pt}}_{u\to uv}(\gamma^{\scaleto{(\mathbf X)}{4pt}}_{u}(x)))
\overset{\ref{KATEGOR_subgroups_and_complements}}{\in} 
H^{\scaleto{(\mathbf Y)}{4pt}}_{\Phi(uv)}
$.
Therefore, 
$
\accentset{\bullet}
{
\Phi_{uv}(\varsigma^{\scaleto{(\mathbf X)}{4pt}}_{u\to uv}(\gamma^{\scaleto{(\mathbf X)}{4pt}}_{u}(x)))
}
<^{\scaleto{(\mathbf Y)}{4pt}}_{\Phi(uv)}
\Phi_{uv}(\varsigma^{\scaleto{(\mathbf X)}{4pt}}_{v\to uv}(\gamma^{\scaleto{(\mathbf X)}{4pt}}_v(y)))
$
follows from (\ref{Kiindulas}) by (\ref{KATEGOR_KibovitettRendezesITTIS}),
which is equivalent to (\ref{75JHCJK7Nl2})
by (\ref{ezLETT}).
\end{enumerate}
\end{enumerate}
\end{enumerate}

\bigskip
\noindent
\ref{KATEGOR_ReSTrict}:
Let $x,y\in X$.
Then $x\in L^{\scaleto{(\mathbf X)}{4pt}}_u$ and $y\in L^{\scaleto{(\mathbf X)}{4pt}}_v$ for some $u,v\in\kappa^{\scaleto{(\mathbf X)}{4pt}}$, c.f.\ (\ref{KATEGOR_EZazX}). 
First we prove that if $\Phi(uv)\in\kappa_I^{\scaleto{(\mathbf Y)}{4pt}}$
then
\begin{equation}\label{NaVegreHogy}
\begin{array}{l}
\mbox{
$\rho^{\scaleto{(\mathbf X)}{4pt}}_{uv}(x)\in H^{\scaleto{(\mathbf X)}{4pt}}_{uv}$
if and only if 
$
\rho^{\scaleto{(\mathbf Y)}{4pt}}_{\Phi(uv)}(\varphi(x))
\in H^{\scaleto{(\mathbf Y)}{4pt}}_{\Phi(uv)}
$.
}
\\
\mbox{
$\rho^{\scaleto{(\mathbf X)}{4pt}}_{uv}(y)\in H^{\scaleto{(\mathbf X)}{4pt}}_{uv}$
if and only if 
$
\rho^{\scaleto{(\mathbf Y)}{4pt}}_{\Phi(uv)}(\varphi(y))
\in H^{\scaleto{(\mathbf Y)}{4pt}}_{\Phi(uv)}
$.
}
\end{array}
\end{equation}
Indeed, 
by (\ref{KATEGOR_ViSSZaEpulCALC}), 
if $\Phi(u)\in\kappa_I^{\scaleto{(\mathbf Y)}{4pt}}$
then
$\varphi(x)\in G^{\scaleto{(\mathbf Y)}{4pt}}_{\Phi(u)}$
is equivalent to
$x\in G^{\scaleto{(\mathbf X)}{4pt}}_u$.
Therefore, by \ref{KATEGOR_subgroups_and_complements}, \ref{KATEGOR_particio}, and the first row of (\ref{KATEGOR_ViSSZaEpulCALC}), 
$\varphi(x)\in H^{\scaleto{(\mathbf Y)}{4pt}}_{\Phi(u)}$ is equivalent to
$x\in H^{\scaleto{(\mathbf X)}{4pt}}_u$.
Therefore, if
$u\geq_{\kappa^{\scaleto{(\mathbf X)}{3pt}}} v$ then 
$
\rho^{\scaleto{(\mathbf Y)}{4pt}}_{\Phi(uv)}(\varphi(x))
\overset{(\ref{KATEGOR_P5})}{=}
\varphi(x)
\in 
H^{\scaleto{(\mathbf Y)}{4pt}}_{\Phi(uv)}
$
is equivalent to
$
H^{\scaleto{(\mathbf X)}{4pt}}_{uv}
\ni
x
\overset{(\ref{KATEGOR_P5})}{=}
\rho^{\scaleto{(\mathbf X)}{4pt}}_{uv}(x)
$,
whereas if $u<_{\kappa^{\scaleto{(\mathbf X)}{3pt}}} v$
then
both $
\rho^{\scaleto{(\mathbf X)}{4pt}}_{uv}(x)
\overset{(\ref{KATEGOR_P5})}{=}
\varsigma^{\scaleto{(\mathbf X)}{4pt}}_{u\to uv}(\gamma^{\scaleto{(\mathbf X)}{4pt}}_u(x))
\overset{\ref{KATEGOR_G2}}{\in}
H^{\scaleto{(\mathbf X)}{4pt}}_{uv}
$
and
$
\rho^{\scaleto{(\mathbf Y)}{4pt}}_{\Phi(uv)}(\varphi(x))
\overset{(\ref{KATEGOR_P5})}{=}
\varsigma^{\scaleto{(\mathbf Y)}{4pt}}_{\Phi(u)\to\Phi(uv)}(\gamma^{\scaleto{(\mathbf Y)}{4pt}}_{\Phi(u)}(x))
\overset{\ref{KATEGOR_G2}}{\in}
H^{\scaleto{(\mathbf Y)}{4pt}}_{\Phi(uv)}
$
hold true. Analogous proof works for the second row of (\ref{NaVegreHogy}).
Also we claim

\begin{equation}\label{CserBerE} 
\begin{array}{rll}
\varsigma_{u\to uv}^{\scaleto{(\mathbf X)}{4pt}}(\gamma^{\scaleto{(\mathbf X)}{4pt}}_u(x))
&
=
&
\gamma^{\scaleto{(\mathbf X)}{4pt}}_{uv}
(
\rho^{\scaleto{(\mathbf X)}{4pt}}_{uv}(x)
),
\\
\varsigma_{v\to uv}^{\scaleto{(\mathbf X)}{4pt}}(\gamma^{\scaleto{(\mathbf X)}{4pt}}_v(y))
&
=
&
\gamma^{\scaleto{(\mathbf X)}{4pt}}_{uv}
(
\rho^{\scaleto{(\mathbf X)}{4pt}}_{uv}(y)
)
,
\end{array}
\end{equation}
since
$$
\begin{array}{rll}
\varsigma_{u\to uv}^{\scaleto{(\mathbf X)}{4pt}}(\gamma^{\scaleto{(\mathbf X)}{4pt}}_u(x))
&
\overset{\ref{IDes}}{=} 
&
\left\{
\begin{array}{rr}
\varsigma_{u\to uv}^{\scaleto{(\mathbf X)}{4pt}}(\gamma^{\scaleto{(\mathbf X)}{4pt}}_u(x))
&
\mbox{if $u<_{\kappa^{\scaleto{(\mathbf X)}{4pt}}} v$}\\
\gamma^{\scaleto{(\mathbf X)}{4pt}}_u(x)
&
\mbox{if $u\geq_{\kappa^{\scaleto{(\mathbf X)}{4pt}}} v$}
\end{array}
\right.
\\
&
\overset{(\ref{tipusSIGMA}),\,(\ref{DEFgamma})}{=} 
&
\left\{
\begin{array}{rr}
\gamma^{\scaleto{(\mathbf X)}{4pt}}_{uv}(\varsigma_{u\to uv}^{\scaleto{(\mathbf X)}{4pt}}(\gamma^{\scaleto{(\mathbf X)}{4pt}}_u(x)))
&
\mbox{if $u<_{\kappa^{\scaleto{(\mathbf X)}{4pt}}} v$}\\
\gamma^{\scaleto{(\mathbf X)}{4pt}}_u(x)
&
\mbox{if $u\geq_{\kappa^{\scaleto{(\mathbf X)}{4pt}}} v$}
\end{array}
\right.
\\
&
\overset{(\ref{KATEGOR_P5})}{=} 
&
\gamma^{\scaleto{(\mathbf X)}{4pt}}_{uv}
(
\rho^{\scaleto{(\mathbf X)}{4pt}}_{uv}(x)
)
.
\end{array}
$$
Now,
$$
\small
\begin{array}{ccl}
\varphi(\giksz{x}{y}) 
& 
\overset{(\ref{KATEGOR_EgySzeruTe})}{=}
&
\varphi\left(
\gteMX{uv}
{\rho^{\scaleto{(\mathbf X)}{4pt}}_{uv}(x)}
{\rho^{\scaleto{(\mathbf X)}{4pt}}_{uv}(y)}
\right)
\\
&
\overset{(\ref{KATEGOR_uPRODigySHORT})}{=}
&
\left\{
\begin{array}{l}
\varphi\left(
{\gamma^{\scaleto{(\mathbf X)}{4pt}}_{uv}(\rho^{\scaleto{(\mathbf X)}{4pt}}_{uv}(x))}
\cdot^{\scaleto{(\mathbf X)}{4pt}}_{uv}
{\gamma^{\scaleto{(\mathbf X)}{4pt}}_{uv}(\rho^{\scaleto{(\mathbf X)}{4pt}}_{uv}(y))}
^\bullet
\right)
\\
\hskip1cm \mbox{ if $uv\in\kappa^{\scaleto{(\mathbf X)}{3pt}}_I$, 
$
{\gamma^{\scaleto{(\mathbf X)}{4pt}}_{uv}(\rho^{\scaleto{(\mathbf X)}{4pt}}_{uv}(x))}
\cdot^{\scaleto{(\mathbf X)}{4pt}}_{uv}
{\gamma^{\scaleto{(\mathbf X)}{4pt}}_{uv}(\rho^{\scaleto{(\mathbf X)}{4pt}}_{uv}(y))}
\in H^{\scaleto{(\mathbf X)}{4pt}}_{uv}
$,
\
}
\\
\hfill
\mbox{
$\neg(
\rho^{\scaleto{(\mathbf X)}{4pt}}_{uv}(x),
\rho^{\scaleto{(\mathbf X)}{4pt}}_{uv}(y)
\in H^{\scaleto{(\mathbf X)}{4pt}}_{uv}
)$
}
\\
\varphi\left(
{\gamma^{\scaleto{(\mathbf X)}{4pt}}_{uv}(\rho^{\scaleto{(\mathbf X)}{4pt}}_{uv}(x))}
\cdot^{\scaleto{(\mathbf X)}{4pt}}_{uv}
{\gamma^{\scaleto{(\mathbf X)}{4pt}}_{uv}(\rho^{\scaleto{(\mathbf X)}{4pt}}_{uv}(y))}
\right)
\\
	 \hskip1cm \mbox{ otherwise}\\
\end{array}
\right.
\\
&
\overset{
(\ref{KATEGOR_ViSSZaEpul}),\,(\ref{DEFgamma})
}{=}
&
\left\{
\begin{array}{l}
\Phi_{uv}
\left(
{\gamma^{\scaleto{(\mathbf X)}{4pt}}_{uv}(\rho^{\scaleto{(\mathbf X)}{4pt}}_{uv}(x))}
\cdot^{\scaleto{(\mathbf X)}{4pt}}_{uv}
{\gamma^{\scaleto{(\mathbf X)}{4pt}}_{uv}(\rho^{\scaleto{(\mathbf X)}{4pt}}_{uv}(y))}
\right)^\bullet
\\
	 \hskip1cm 
\mbox{
{
\color{midgrey}
if
\color{black}
$\Phi(uv)\in\kappa^{\scaleto{(\mathbf Y)}{3pt}}_I$, 
\color{midgrey}
$uv\in\kappa^{\scaleto{(\mathbf X)}{3pt}}_I$,} 
\color{midgrey}
$
{\gamma^{\scaleto{(\mathbf X)}{4pt}}_{uv}(\rho^{\scaleto{(\mathbf X)}{4pt}}_{uv}(x))}
\cdot^{\scaleto{(\mathbf X)}{4pt}}_{uv}
{\gamma^{\scaleto{(\mathbf X)}{4pt}}_{uv}(\rho^{\scaleto{(\mathbf X)}{4pt}}_{uv}(y))}
\in H^{\scaleto{(\mathbf X)}{4pt}}_{uv}
$,
},
\\
\hfill
\color{midgrey}
\mbox{
$
\neg(
\rho^{\scaleto{(\mathbf X)}{4pt}}_{uv}(x),
\rho^{\scaleto{(\mathbf X)}{4pt}}_{uv}(y)
\in H^{\scaleto{(\mathbf X)}{4pt}}_{uv})
$,
}
\\
\Phi_{uv}
\left(
{\gamma^{\scaleto{(\mathbf X)}{4pt}}_{uv}(\rho^{\scaleto{(\mathbf X)}{4pt}}_{uv}(x))}
\cdot^{\scaleto{(\mathbf X)}{4pt}}_{uv}
{\gamma^{\scaleto{(\mathbf X)}{4pt}}_{uv}(\rho^{\scaleto{(\mathbf X)}{4pt}}_{uv}(y))}
\right)
	\\  \hskip1cm \mbox{{\color{midgrey} otherwise}}\\
\end{array}
\right.
\\
&
\overset{\ref{KATEGOR_particio}}{=}
&
\left\{
\color{midgrey}
\begin{array}{l}
\Phi_{uv}
\left(
{\gamma^{\scaleto{(\mathbf X)}{4pt}}_{uv}(\rho^{\scaleto{(\mathbf X)}{4pt}}_{uv}(x))}
\cdot^{\scaleto{(\mathbf X)}{4pt}}_{uv}
{\gamma^{\scaleto{(\mathbf X)}{4pt}}_{uv}(\rho^{\scaleto{(\mathbf X)}{4pt}}_{uv}(y))}
\right)^\bullet
\\
	 \hskip1cm \mbox{ 
\color{black}
if $\Phi(uv)\in\kappa^{\scaleto{(\mathbf Y)}{3pt}}_I$, 
$
{\gamma^{\scaleto{(\mathbf X)}{4pt}}_{uv}(\rho^{\scaleto{(\mathbf X)}{4pt}}_{uv}(x))}
\cdot^{\scaleto{(\mathbf X)}{4pt}}_{uv}
{\gamma^{\scaleto{(\mathbf X)}{4pt}}_{uv}(\rho^{\scaleto{(\mathbf X)}{4pt}}_{uv}(y))}
\in H^{\scaleto{(\mathbf X)}{4pt}}_{uv}
$,
}
\\
\hfill
\mbox{$\neg(
\rho^{\scaleto{(\mathbf X)}{4pt}}_{uv}(x),
\rho^{\scaleto{(\mathbf X)}{4pt}}_{uv}(y)
\in H^{\scaleto{(\mathbf X)}{4pt}}_{uv})$
}
\\
\Phi_{uv}
\left(
{\gamma^{\scaleto{(\mathbf X)}{4pt}}_{uv}(\rho^{\scaleto{(\mathbf X)}{4pt}}_{uv}(x))}
\cdot^{\scaleto{(\mathbf X)}{4pt}}_{uv}
{\gamma^{\scaleto{(\mathbf X)}{4pt}}_{uv}(\rho^{\scaleto{(\mathbf X)}{4pt}}_{uv}(y))}
\right)
	\\  \hskip1cm \mbox{{\color{midgrey} otherwise}}\\
\end{array}
\right.
\\
&
\overset{(\ref{NaVegreHogy}),\,(\ref{CserBerE})}{=}
&
\left\{
\color{midgrey}
\begin{array}{l}
\Phi_{uv}
\left(
{\color{black}
\varsigma^{\scaleto{(\mathbf X)}{4pt}}_{u\to uv}(\gamma^{\scaleto{(\mathbf X)}{4pt}}_u(x))}
\cdot^{\scaleto{(\mathbf X)}{4pt}}_{uv}
{\color{black}
\varsigma^{\scaleto{(\mathbf X)}{4pt}}_{v\to uv}(\gamma^{\scaleto{(\mathbf X)}{4pt}}_v(y))}
\right)^\bullet
\\
	 \hskip1cm \mbox{ 
\color{midgrey}
if $\Phi(uv)\in\kappa^{\scaleto{(\mathbf Y)}{3pt}}_I$, 
$
{\varsigma^{\scaleto{(\mathbf X)}{4pt}}_{u\to uv}(\gamma^{\scaleto{(\mathbf X)}{4pt}}_u(x))}
\cdot^{\scaleto{(\mathbf X)}{4pt}}_{uv}
{\varsigma^{\scaleto{(\mathbf X)}{4pt}}_{v\to uv}(\gamma^{\scaleto{(\mathbf X)}{4pt}}_v(y))}
\in H^{\scaleto{(\mathbf X)}{4pt}}_{uv}
$,
}
\\
\hfill
\color{midgrey}
\mbox{
\color{black}
$\neg\left(
\rho^{\scaleto{(\mathbf Y)}{4pt}}_{\Phi(uv)}(\varphi(x)),
\rho^{\scaleto{(\mathbf Y)}{4pt}}_{\Phi(uv)}(\varphi(y))
\in H^{\scaleto{(\mathbf Y)}{4pt}}_{\Phi(uv)}
\right)$,
}
\\
\Phi_{uv}
\left(
{\color{black}
\varsigma^{\scaleto{(\mathbf X)}{4pt}}_{u\to uv}(\gamma^{\scaleto{(\mathbf X)}{4pt}}_u(x))}
\cdot^{\scaleto{(\mathbf X)}{4pt}}_{uv}
{\color{black}
\varsigma^{\scaleto{(\mathbf X)}{4pt}}_{v\to uv}(\gamma^{\scaleto{(\mathbf X)}{4pt}}_v(y))}
\right)
	\\  \hskip1cm \mbox{{\color{midgrey} otherwise}}\\
\end{array}
\right.
\\
&
\overset{\ref{KATEGOR_homomorfizmus},\,\ref{KATEGOR_subgroups_and_complements}}{=}
&
\left\{
\begin{array}{l}
\left(
\Phi_{uv}\left({\varsigma^{\scaleto{(\mathbf X)}{4pt}}_{u\to uv}(\gamma^{\scaleto{(\mathbf X)}{4pt}}_u(x))}\right)
\cdot^{\scaleto{(\mathbf Y)}{3pt}}_{\Phi(uv)}
\Phi_{uv}\left({\varsigma^{\scaleto{(\mathbf X)}{4pt}}_{u\to uv}(\gamma^{\scaleto{(\mathbf X)}{4pt}}_v(y))}\right)
\right)^\bullet

\\
	 \hskip1cm \mbox{ \color{midgrey}
	if $\Phi(uv)\in\kappa^{\scaleto{(\mathbf Y)}{3pt}}_I$, 
$\neg\left(
\rho^{\scaleto{(\mathbf Y)}{4pt}}_{\Phi(uv)}(\varphi(x)),
\rho^{\scaleto{(\mathbf Y)}{4pt}}_{\Phi(uv)}(\varphi(y))
\in H^{\scaleto{(\mathbf Y)}{4pt}}_{\Phi(uv)}
\right)$,
}
\\
\hskip1cm 
\mbox{
\color{black}
$
\Phi_{uv}(\varsigma_{u\to uv}^{\scaleto{(\mathbf X)}{4pt}}(\gamma^{\scaleto{(\mathbf X)}{4pt}}_u(x)))
\cdot^{\scaleto{(\mathbf Y)}{3pt}}_{\Phi(uv)}
\Phi_{uv}(\varsigma_{v\to uv}^{\scaleto{(\mathbf X)}{4pt}}(\gamma^{\scaleto{(\mathbf X)}{4pt}}_v(y)))
\in H^{\scaleto{(\mathbf Y)}{4pt}}_{\Phi(uv)}$,}\\
\Phi_{uv}
\left({\varsigma^{\scaleto{(\mathbf X)}{4pt}}_{u\to uv}(\gamma^{\scaleto{(\mathbf X)}{4pt}}_u(x))}\right)
\cdot^{\scaleto{(\mathbf Y)}{3pt}}_{\Phi(uv)}
\Phi_{uv}\left({\varsigma^{\scaleto{(\mathbf X)}{4pt}}_{u\to uv}(\gamma^{\scaleto{(\mathbf X)}{4pt}}_v(y))}\right)
	\\  \hskip1cm \mbox{{\color{midgrey} otherwise}}\\
\end{array}
\right.
\\ 
&
\overset{(\ref{Atugorja})}{=}
&
\left\{
\begin{array}{ll}
\left(
{\gamma^{\scaleto{(\mathbf Y)}{4pt}}_{\Phi(uv)}(\rho^{\scaleto{(\mathbf Y)}{4pt}}_{\Phi(uv)}(\varphi(x)))}
\cdot^{\scaleto{(\mathbf Y)}{3pt}}_{\Phi(uv)}
{\gamma^{\scaleto{(\mathbf Y)}{4pt}}_{\Phi(uv)}(\rho^{\scaleto{(\mathbf Y)}{4pt}}_{\Phi(uv)}(\varphi(y)))}
\right)^\bullet
\\
\hskip1cm
\mbox{ 
\color{midgrey}
if $\Phi(uv)\in\kappa^{\scaleto{(\mathbf Y)}{4pt}}_I$, 
$
\color{black}
{\gamma^{\scaleto{(\mathbf Y)}{4pt}}_{\Phi(uv)}(\rho^{\scaleto{(\mathbf Y)}{4pt}}_{\Phi(uv)}(\varphi(x)))}
\cdot^{\scaleto{(\mathbf Y)}{3pt}}_{\Phi(uv)}
{\gamma^{\scaleto{(\mathbf Y)}{4pt}}_{\Phi(uv)}(\rho^{\scaleto{(\mathbf Y)}{4pt}}_{\Phi(uv)}(\varphi(y)))}
\color{midgrey}
\in H^{\scaleto{(\mathbf Y)}{4pt}}_{\Phi(uv)}$}
\\
\hfill
\color{midgrey}
\mbox{ and $\neg(
\rho^{\scaleto{(\mathbf Y)}{4pt}}_{\Phi(uv)}(\varphi(x)),
\color{midgrey}
\rho^{\scaleto{(\mathbf Y)}{4pt}}_{\Phi(uv)}(\varphi(y))
\in H^{\scaleto{(\mathbf Y)}{4pt}}_{\Phi(uv)}
)$}\\
{\gamma^{\scaleto{(\mathbf Y)}{4pt}}_{uv}(\rho^{\scaleto{(\mathbf Y)}{4pt}}_{\Phi(uv)}(\varphi(x)))}
\cdot^{\scaleto{(\mathbf Y)}{3pt}}_{\Phi(uv)}
{\gamma^{\scaleto{(\mathbf Y)}{4pt}}_{uv}(\rho^{\scaleto{(\mathbf Y)}{4pt}}_{\Phi(uv)}(\varphi(y)))}\\
\hskip1cm
\color{midgrey}
\mbox{ otherwise}\\
\end{array}
\right.
\\
&
\overset{(\ref{KATEGOR_uPRODigySHORT})}{=}
&
\gteMY{\Phi(uv)}
{\rho^{\scaleto{(\mathbf Y)}{4pt}}_{\Phi(uv)}(\varphi(x))}
{\rho^{\scaleto{(\mathbf Y)}{4pt}}_{\Phi(uv)}(\varphi(y))}
\\
&
\overset{(\ref{KATEGOR_EgySzeruTe})}{=}
&
\gipsz{\varphi(x)}{\varphi(y)}.

\end{array}
$$

\bigskip
\noindent
\ref{KATEGOR_ReSTrictITE}:
For $x\in X$, $x\in L^{\scaleto{(\mathbf X)}{4pt}}_u$ it holds true that

\begin{equation}\label{RKompisOK}
\begin{array}{lll}
\varphi\left(\negaiksz{x}\right)
&\overset{(\ref{KATEGOR_SplitNegaSIMP})}{=}&
\left\{
\begin{array}{ll}
\varphi\left(\left(
\gamma^{\scaleto{(\mathbf X)}{4pt}}_u(x)
^{-1^{\scaleto{(\mathbf X)}{3pt}}_u}\right)^\bullet\right)
& \mbox{ if $u\in\kappa^{\scaleto{(\mathbf X)}{4pt}}_I$, $x\in H^{\scaleto{(\mathbf X)}{4pt}}_u$}\\
\varphi\left({\gamma^{\scaleto{(\mathbf X)}{4pt}}_u(x)^{-1^{\scaleto{(\mathbf X)}{3pt}}_u}}_{\downarrow_{u}}\right)
& \mbox{ if $u\in\kappa^{\scaleto{(\mathbf X)}{4pt}}_J$, $x\in G^{\scaleto{(\mathbf X)}{4pt}}_u$}\\
\varphi\left(\gamma^{\scaleto{(\mathbf X)}{4pt}}_u(x)^{-1^{\scaleto{(\mathbf X)}{3pt}}_u}\right)
& \mbox{ otherwise}\\
\end{array}
\right.
\\
&\overset{\ref{KATEGOR_particio}}{=}&
\left\{
\color{midgrey}
\begin{array}{ll}

\varphi\left(\left(\gamma^{\scaleto{(\mathbf X)}{4pt}}_u(x)^{-1^{\scaleto{(\mathbf X)}{3pt}}_u}\right)^\bullet\right)
& \mbox{ if $u\in\kappa^{\scaleto{(\mathbf X)}{4pt}}_I$, 
{\color{black}
$\Phi(u)
\in
\kappa_o^{\scaleto{(\mathbf Y)}{4pt}}$,}
$x\in H^{\scaleto{(\mathbf X)}{4pt}}_u$}\\

\varphi\left(\left(\gamma^{\scaleto{(\mathbf X)}{4pt}}_u(x)^{-1^{\scaleto{(\mathbf X)}{3pt}}_u}\right)^\bullet\right)
& \mbox{ if 
{\color{black}
$\Phi(u)
\in
\kappa_I^{\scaleto{(\mathbf Y)}{4pt}}$,}
$x\in H^{\scaleto{(\mathbf X)}{4pt}}_u$}\\

\varphi\left({\gamma^{\scaleto{(\mathbf X)}{4pt}}_u(x)^{-1^{\scaleto{(\mathbf X)}{3pt}}_u}}_{\downarrow_{u}}
\right) 
& \mbox{ if $u\in\kappa^{\scaleto{(\mathbf X)}{4pt}}_J$,
{\color{black}
$\Phi(u)
\in
\kappa_o^{\scaleto{(\mathbf Y)}{4pt}}$,}
$x\in G^{\scaleto{(\mathbf X)}{4pt}}_u$}\\

\varphi\left({\gamma^{\scaleto{(\mathbf X)}{4pt}}_u(x)^{-1^{\scaleto{(\mathbf X)}{3pt}}_u}}_{\downarrow_{u}}\right) 
& \mbox{ if 
{\color{black}
$\Phi(u)
\in
\kappa_J^{\scaleto{(\mathbf Y)}{4pt}}$,}
$x\in G^{\scaleto{(\mathbf X)}{4pt}}_u$}\\


\varphi\left(\gamma^{\scaleto{(\mathbf X)}{4pt}}_u(x)^{-1^{\scaleto{(\mathbf X)}{3pt}}_u}\right) & \mbox{ otherwise}\\

\end{array}
\right.
\\
&\overset{(\ref{KATEGOR_ViSSZaEpul})}{=}&
\left\{
\color{midgrey}
\begin{array}{ll}
\color{black}
\Phi_u\left(\gamma^{\scaleto{(\mathbf X)}{4pt}}_u(x)^{-1^{\scaleto{(\mathbf X)}{3pt}}_u}\right)
\overset{\ref{KATEGOR_homomorfizmus}}{=}
\Phi_u(\gamma^{\scaleto{(\mathbf X)}{4pt}}_u(x))^{-1^{\scaleto{(\mathbf Y)}{3pt}}_{\Phi(u)}}
& 
\\
\hfill \mbox{ if $u\in\kappa^{\scaleto{(\mathbf X)}{4pt}}_I$, 
$\Phi(u)
\in
\kappa_o^{\scaleto{(\mathbf Y)}{4pt}}$,
$x\in H^{\scaleto{(\mathbf X)}{4pt}}_u$}\\

\color{black}
\Phi_u\left(\gamma^{\scaleto{(\mathbf X)}{4pt}}_u(x)^{-1^{\scaleto{(\mathbf X)}{3pt}}_u}\right)^\bullet
\overset{\ref{KATEGOR_homomorfizmus}}{=}
\left(\Phi_u(\gamma^{\scaleto{(\mathbf X)}{4pt}}_u(x))^{-1^{\scaleto{(\mathbf Y)}{3pt}}_{\Phi(u)}}\right)^\bullet
& 
\\
\hfill \mbox{ if 
$\Phi(u)
\in
\kappa_I^{\scaleto{(\mathbf Y)}{4pt}}$,
$x\in H^{\scaleto{(\mathbf X)}{4pt}}_u$}\\

\color{black}\Phi_u\left({\gamma^{\scaleto{(\mathbf X)}{4pt}}_u(x)^{-1^{\scaleto{(\mathbf X)}{3pt}}_u}}_{\downarrow_{u}}\right) 
\overset{\ref{S6b},\,\ref{KATEGOR_homomorfizmus}}{=}
\Phi_u(\gamma^{\scaleto{(\mathbf X)}{4pt}}_u(x))^{-1^{\scaleto{(\mathbf Y)}{3pt}}_{\Phi(u)}}

& 
\\
\hfill \mbox{ if $u\in\kappa^{\scaleto{(\mathbf X)}{4pt}}_J$,
$\Phi(u)
\in
\kappa_o^{\scaleto{(\mathbf Y)}{4pt}}$,
$x\in G^{\scaleto{(\mathbf X)}{4pt}}_u$}\\

\color{black}
\Phi_u\left({\gamma^{\scaleto{(\mathbf X)}{4pt}}_u(x)^{-1^{\scaleto{(\mathbf X)}{3pt}}_u}}_{\downarrow_{u}}\right)
\overset{\ref{S6a},\,\ref{KATEGOR_homomorfizmus}}{=}
\left(\Phi_u(\gamma^{\scaleto{(\mathbf X)}{4pt}}_u(x))^{-1^{\scaleto{(\mathbf Y)}{3pt}}_{\Phi(u)}}\right)
_{\downarrow_{\Phi(u)}}

& 
\\
\hfill \mbox{ if 
$\Phi(u)
\in
\kappa_J^{\scaleto{(\mathbf Y)}{4pt}}$,
$x\in G^{\scaleto{(\mathbf X)}{4pt}}_u$}\\


\color{black}
\Phi_u\left(\gamma^{\scaleto{(\mathbf X)}{4pt}}_u(x)^{-1^{\scaleto{(\mathbf X)}{3pt}}_u}\right)
\overset{\ref{KATEGOR_homomorfizmus}}{=}
\Phi_u(\gamma^{\scaleto{(\mathbf X)}{4pt}}_u(x))^{-1^{\scaleto{(\mathbf Y)}{3pt}}_{\Phi(u)}}
& 
\\
\hfill \mbox{ otherwise}\\
\end{array}
\right.
\\
&=&
\left\{
\color{midgrey}
\begin{array}{ll}

\left(\Phi_u(\gamma^{\scaleto{(\mathbf X)}{4pt}}_u(x)))^{-1^{\scaleto{(\mathbf Y)}{3pt}}_{\Phi(u)}}\right)^\bullet
& \mbox{ if 
$\Phi(u)
\in
\kappa_I^{\scaleto{(\mathbf Y)}{4pt}}$,
\color{black}
$\varphi(x)
\overset{(\ref{KATEGOR_ViSSZaEpulCALC}),\,\ref{KATEGOR_subgroups_and_complements}}{\in}
H^{\scaleto{(\mathbf Y)}{4pt}}_{\Phi(u)}
$}\\

\left(\Phi_u(\gamma^{\scaleto{(\mathbf X)}{4pt}}_u(x)))^{-1^{\scaleto{(\mathbf Y)}{3pt}}_{\Phi(u)}}\right)
_{\downarrow_{\Phi(u)}}

& \mbox{ if 
$\Phi(u)
\in
\kappa_J^{\scaleto{(\mathbf Y)}{4pt}}$,
\color{black}
$\varphi(x)
\overset{(\ref{KATEGOR_ViSSZaEpulCALC})}{\in}
G^{\scaleto{(\mathbf Y)}{4pt}}_{\Phi(u)}
$
}\\

\Phi_u(\gamma^{\scaleto{(\mathbf X)}{4pt}}_u(x))^{-1^{\scaleto{(\mathbf Y)}{3pt}}_{\Phi(u)}}
& \mbox{ \color{midgrey}
otherwise}\\


\end{array}
\right.
\\
&\overset{(\ref{EziStrUe})}{=}&
\left\{
\color{midgrey}
\begin{array}{ll}

\color{black}
\left(\gamma^{\scaleto{(\mathbf Y)}{4pt}}_{\Phi(u)}(\varphi(x)))^{-1^{\scaleto{(\mathbf Y)}{3pt}}_{\Phi(u)}}\right)^\bullet
& \mbox{ if 
$\Phi(u)
\in
\kappa_I^{\scaleto{(\mathbf Y)}{4pt}}$,
$\varphi(x)
\in
H^{\scaleto{(\mathbf Y)}{4pt}}_{\Phi(u)}
$}\\

\color{black}
\left(
\gamma^{\scaleto{(\mathbf Y)}{4pt}}_{\Phi(u)}(\varphi(x))^{-1^{\scaleto{(\mathbf Y)}{3pt}}_{\Phi(u)}}
\right)
_{\downarrow_{\Phi(u)}}

& \mbox{ if 
$\Phi(u)
\in
\kappa_J^{\scaleto{(\mathbf Y)}{4pt}}$,
$\varphi(x)
\in
G^{\scaleto{(\mathbf Y)}{4pt}}_{\Phi(u)}
$
}\\

\color{black}
\gamma^{\scaleto{(\mathbf Y)}{4pt}}_{\Phi(u)}(\varphi(x))^{-1^{\scaleto{(\mathbf Y)}{3pt}}_{\Phi(u)}}
& \mbox{ otherwise}\\


\end{array}
\right.
\\
&\overset{(\ref{KATEGOR_SplitNegaSIMP})}{=}&
\negaipsz{\varphi(x)}
,
\end{array}
\end{equation}
hence
$
\varphi\left(\resiksz{x}{y}\right)
\overset{(\ref{KATEGOR_IgYaReSi})}{=}
\varphi\left(\negaiksz{\left(\giksz{x}{\negaiksz{y}}\right)}\right)
\overset{(\ref{RKompisOK}),\,\ref{KATEGOR_ReSTrict}}{=}
\negaipsz{\left(\gipsz{\varphi(x)}{\negaipsz{\varphi(y)}}\right)}
\overset{(\ref{KATEGOR_IgYaReSi})}{=}
\resipsz{\varphi(x)}{\varphi(y)}
$.

\bigskip
\noindent
\ref{KATEGOR_TkegyenlOEk}:
$
\varphi(t^{\scaleto{(\mathbf X)}{4pt}})
\overset{(\ref{KATEGOR_ViSSZaEpul})}{=}
\Phi_{t^{\scaleto{(\mathbf X)}{4pt}}}(t^{\scaleto{(\mathbf X)}{4pt}})
\overset{(\ref{KATEGOR_tLESZez}),\,\ref{KATEGOR_legkisebbelem}}{=}
t^{\scaleto{(\mathbf Y)}{4pt}}
$
.

\bigskip
\noindent
\ref{KATEGOR_FkegyenlOEk}:
$
\varphi(f^{\scaleto{(\mathbf X)}{4pt}})
\overset{(\ref{KATEGOR_tLESZaz})}{=}
\varphi\left(\negaiksz{t^{\scaleto{(\mathbf X)}{4pt}}}\right)
\overset{(\ref{RKompisOK})}{=}
\negaipsz{\varphi\left(t^{\scaleto{(\mathbf X)}{4pt}}\right)}
\overset{\ref{KATEGOR_TkegyenlOEk}}{=}
\negaipsz{\left(t^{\scaleto{(\mathbf Y)}{4pt}}\right)}
\overset{(\ref{KATEGOR_tLESZaz})}{=}
f^{\scaleto{(\mathbf Y)}{4pt}}
$.

\bigskip
\bigskip
\noindent
(3)
Since by (\ref{KATEGOR_EQmegszoRITOm}), $\Phi$ arises from $\varphi$ by restriction, it suffices to show that any bunch homomorphism $\Phi$ {\em uniquely} extends to a homomorphism $\varphi$. To this end, assume that $\varphi$ is a homomorphism,
and that its restriction to $G^{\scaleto{(\mathbf X)}{4pt}}$ is the bunch homomorphism $\Phi$.
We shall prove that $\varphi$ coincides with the one given in (\ref{KATEGOR_ViSSZaEpul}).
Since the first row of (\ref{KATEGOR_ViSSZaEpul}) obviously holds, 
let $x\in \accentset{\bullet}H^{\scaleto{(\mathbf X)}{4pt}}_{u}$ for some $u\in\kappa_I^{\scaleto{(\mathbf X)}{4pt}}$.
It holds true by (\ref{KATEGOR_XHiGYkESZUL}) that 
$x=\giksz{y}{\negaiksz{u}}$ for some 
$y\in H^{\scaleto{(\mathbf X)}{4pt}}_{u}
\overset{\ref{KATEGOR_G2},\,(\ref{PHIdomain})}{\subseteq}
G^{\scaleto{(\mathbf X)}{4pt}}$
and
$y
\overset{(\ref{DEFgamma})}{=}
\gamma^{\scaleto{(\mathbf X)}{4pt}}_u(x)$.
Therefore,
$\varphi(x)=\varphi(\giksz{y}{\negaiksz{u}})=\gipsz{\varphi(y)}{\negaipsz{\varphi(u)}}$,
where
$
\varphi(y)
\overset{y\in G^{\scaleto{(\mathbf X)}{3pt}}}{=}
\Phi(y)
\overset{(\ref{PhiSlice})}{=}
\Phi_u(y)
\overset{\ref{KATEGOR_subgroups_and_complements}}{\in}
H^{\scaleto{(\mathbf Y)}{4pt}}_{\Phi(u)}
$.
By \ref{KATEGOR_particio}, $\Phi(u)\in
\kappa_o^{\scaleto{(\mathbf Y)}{4pt}}
\cup
\kappa_I^{\scaleto{(\mathbf Y)}{4pt}}
$.
If
$
\Phi(u)
\in
\kappa_o^{\scaleto{(\mathbf Y)}{4pt}}
$
then 
$
\negaipsz{\varphi(u)}
\overset{u\in G^{\scaleto{(\mathbf X)}{3pt}}}{=}
\negaipsz{\Phi(u)}
\overset{(\ref{PhiSlice})}{=}
\negaipsz{\Phi_u(u)}
\overset{Table~\ref{KATEGOR_ThetaPsiOmegaAGAIN}}{=}
\negaipsz{t^{\scaleto{(\mathbf Y)}{4pt}}}
=
t^{\scaleto{(\mathbf Y)}{4pt}}
$
is the unit element for $L^{\scaleto{(\mathbf Y)}{4pt}}_{\Phi(u)}$, and hence 
$
\gipsz{\varphi(y)}{\negaipsz{\varphi(u)}}
=
\gipsz{\Phi_u(y)}{t^{\scaleto{(\mathbf Y)}{4pt}}}
=
\Phi_u(y)
=
\Phi_u(\gamma^{\scaleto{(\mathbf X)}{4pt}}_u(x))
$,
whereas if
$
\Phi(u)
\in
\kappa_I^{\scaleto{(\mathbf Y)}{4pt}}
$
then
$
\gipsz{\varphi(y)}{\negaipsz{\varphi(u)}}
=
\gipsz{\Phi_u(y)}{\negaipsz{\Phi_u(u)}}
\overset{(\ref{KATEGOR_XHiGYkESZUL})}{=}
\accentset{\bullet}{\Phi_u(y)}
=
\accentset{\bullet}{\Phi_u(\gamma^{\scaleto{(\mathbf X)}{4pt}}_u(x))}
$.
\end{proof}

\begin{proposition}
The category $\mathcal B_\mathcal G$ is well-defined.
\end{proposition}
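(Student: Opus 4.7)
The plan is to verify that $\mathcal B_\mathcal G$ satisfies the four axioms of a category. Since Definition~\ref{DEFbunchHom} declares bunch homomorphisms to be set functions between the disjoint unions $G^{\scaleto{(\mathbf X)}{4pt}}$ and $G^{\scaleto{(\mathbf Y)}{4pt}}$, composition in $\mathcal B_\mathcal G$ will be taken to be ordinary set-theoretic composition. With this convention, associativity of composition and the two unit laws are automatic, so only two things require checking: (i) for each bunch $\pazocal X$, the identity map on $G^{\scaleto{(\mathbf X)}{4pt}}$ is a bunch homomorphism, and (ii) bunch homomorphisms are closed under composition.

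For (i) I shall simply inspect \ref{KATEGOR_embedding}--\ref{KATEGOR_injective} with $\Phi=\mathrm{id}_{G^{\scaleto{(\mathbf X)}{4pt}}}$. Since $\Phi$ fixes each element of the skeleton, each layer group, each layer subgroup and its complement, each transition map, the least skeleton element, and each piece $\kappa^{\scaleto{(\mathbf X)}{4pt}}_\star$ of the partition, all of \ref{KATEGOR_embedding}--\ref{KATEGOR_subgroups_and_complements} are tautological; in \ref{KATEGOR_szomszed} only sub-case \ref{S6a} can occur (as $\mathrm{id}$ maps $\kappa^{\scaleto{(\mathbf X)}{4pt}}_J$ into itself), and it reduces to the trivial equality $x_{\downarrow_u}=x_{\downarrow_u}$; finally \ref{KATEGOR_injective} is immediate because $\Phi_{uv}$ is the identity and so preserves strict inequalities.

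For (ii) I plan to avoid a direct verification of the eight conditions for $\Psi\circ\Phi$ (which is possible but laborious) and instead exploit the bijective correspondence of Lemma~\ref{KATEGOR_HogyanLatszik}. Given bunch homomorphisms $\Phi:\pazocal X\to\pazocal Y$ and $\Psi:\pazocal Y\to\pazocal Z$, part~(2) of the lemma extends them uniquely to FL$_e$-algebra homomorphisms $\varphi:\mathbf X_\pazocal X\to\mathbf X_\pazocal Y$ and $\psi:\mathbf X_\pazocal Y\to\mathbf X_\pazocal Z$. Since composition of algebra homomorphisms is again an algebra homomorphism, $\psi\circ\varphi$ is an FL$_e$-chain homomorphism from $\mathbf X_\pazocal X$ to $\mathbf X_\pazocal Z$, hence by part~(1) of the same lemma its restriction $(\psi\circ\varphi)|_{G^{\scaleto{(\mathbf X)}{4pt}}}$ is a bunch homomorphism from $\pazocal X$ to $\pazocal Z$. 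I then plan to show this restriction coincides, as a set function, with $\Psi\circ\Phi$: for $x\in G^{\scaleto{(\mathbf X)}{4pt}}_u$ the first row of (\ref{KATEGOR_ViSSZaEpul}) gives $\varphi(x)=\Phi(x)$, and by \ref{KATEGOR_homomorfizmus} the value lies in $G^{\scaleto{(\mathbf Y)}{4pt}}_{\Phi(u)}\subseteq G^{\scaleto{(\mathbf Y)}{4pt}}$; applying the same first row to $\psi$ then yields $\psi(\Phi(x))=\Psi(\Phi(x))$, as desired.

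The only genuine point to watch is this last identification: it relies on $\Phi$ landing in the \emph{undotted} piece $G^{\scaleto{(\mathbf Y)}{4pt}}_{\Phi(u)}$ rather than in the dotted companion $\accentset{\bullet}H^{\scaleto{(\mathbf Y)}{4pt}}_{\Phi(u)}$, which is precisely guaranteed by \ref{KATEGOR_homomorfizmus}; this is what forces the first row of the case distinction in (\ref{KATEGOR_ViSSZaEpul}) to apply at every step of the argument, both when lifting $\Phi$ to $\varphi$ and when applying $\psi$ to $\Phi(x)$. Beyond this, the proof is bookkeeping: associativity and the unit laws transfer automatically from the corresponding properties of set-theoretic function composition, so no additional computation is expected.
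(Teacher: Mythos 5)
Your proof is correct, but for the substantive half of the argument it takes a genuinely different route from the paper. The paper verifies closure under composition \emph{directly}: it checks \ref{KATEGOR_embedding}--\ref{KATEGOR_injective} for $\Psi\circ\Phi$ one by one, with the only non-trivial work being the repeated use of \ref{KATEGOR_particio} to see that the case hypotheses of \ref{KATEGOR_subgroups_and_complements}--\ref{KATEGOR_injective} propagate from $(\Psi\circ\Phi)(u)$ back to $\Phi(u)$, plus \ref{KATEGOR_commutes} and \ref{KATEGOR_subgroups_and_complements} to chain the two instances of \ref{KATEGOR_injective}. You instead transfer the problem across Lemma~\ref{KATEGOR_HogyanLatszik}: lift $\Phi$ and $\Psi$ to FL$_e$-homomorphisms via (\ref{KATEGOR_ViSSZaEpulEredeti}), compose on the algebra side where closure is trivial, restrict back, and identify the result with $\Psi\circ\Phi$ using \ref{KATEGOR_homomorfizmus} to keep all values in the undotted parts so that only the first row of (\ref{KATEGOR_ViSSZaEpul}) is ever invoked. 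That identification is the one point that needs care and you handle it correctly. Your route is shorter and avoids redoing the case analysis, but it is not self-contained at the level of the bunch axioms: it uses the full strength of Lemma~\ref{KATEGOR_HogyanLatszik} together with Theorem~\ref{KATEGOR_BUNCH_X}(B)--(C) (needed to realize arbitrary bunches $\pazocal X,\pazocal Y,\pazocal Z$ as $\pazocal G_{(\mathbf X_{\pazocal X})}$ etc., so that the lemma applies -- a step you should state explicitly), whereas the paper's direct verification depends only on Definition~\ref{DEFbunchHom} and makes visible exactly which axioms force composability. There is no circularity, since the lemma is proved independently of the proposition, so your argument stands; the identity-morphism and associativity parts coincide with the paper's.
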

\begin{proof}
We shall verify that 
the function composition of two bunch homomorphisms
is a bunch homomorphism, that this composition is associative, 
and that the identity mapping is a bunch homomorphism and also an identity morphism.
Adapt the notation of Definition~\ref{DEFbunchHom} for three bunches of layer groups $\pazocal X$, $\pazocal Y$, and $\pazocal Z$, and
let $\Phi$ and $\Psi$ be bunch homomorphisms from $\pazocal X$ to $\pazocal Y$ and from $\pazocal Y$ to $\pazocal Z$, respectively.
Obviously, $\Psi\circ\Phi$ is of type 
$G^{\scaleto{(\mathbf X)}{4pt}} \to G^{\scaleto{(\mathbf Y)}{4pt}}$, and it is also obvious that  $\Psi\circ\Phi$ satisfies properties \ref{KATEGOR_embedding}--\ref{KATEGOR_particio}.
As for \ref{KATEGOR_subgroups_and_complements}, if 
$u\in\kappa_I^{\scaleto{(\mathbf X)}{4pt}}$
and
$(\Psi\circ\Phi)(u)\in\kappa_I^{\scaleto{(\mathbf Z)}{4pt}}$
then from the latter, by \ref{KATEGOR_particio},
$\Phi(u)\in\kappa_I^{\scaleto{(\mathbf Y)}{4pt}}$ follows, and hence 
$\Psi\circ\Phi$
clearly satisfies
\ref{KATEGOR_subgroups_and_complements}, too.
Invoking \ref{KATEGOR_particio} in the very same manner as above shows that
$\Psi\circ\Phi$ satisfies \ref{S6a}. 
To prove \ref{S6b}, notice that since $u\in\kappa^{\scaleto{(\mathbf X)}{4pt}}_J$, it follows from \ref{KATEGOR_particio} that $\Phi(u)$ is either in $\kappa^{\scaleto{(\mathbf Y)}{4pt}}_J$ or in $\kappa^{\scaleto{(\mathbf Y)}{4pt}}_o$. In either case application of \ref{KATEGOR_szomszed} for $\Phi$ and then for $\Psi$ to concludes the proof of \ref{S6b}.
As for \ref{KATEGOR_injective}, from
$(\Psi\circ\Phi)(uv)\in \kappa_I^{\scaleto{(\mathbf Z)}{4pt}}$
it follows by \ref{KATEGOR_particio} that $\Phi(uv)\in \kappa_I^{\scaleto{(\mathbf Y)}{4pt}}$,
hence
$
\varsigma_{u\to uv}^{\scaleto{(\mathbf X)}{4pt}}(y)
\prec^{\scaleto{(\mathbf X)}{4pt}}_{uv}
\varsigma_{u\to uv}^{\scaleto{(\mathbf X)}{4pt}}(x)
\in
H^{\scaleto{(\mathbf X)}{4pt}}_{uv}
$
implies, by using \ref{KATEGOR_injective} for $\Phi$,
that 
$
\Phi_{uv}(\varsigma_{v\to uv}^{\scaleto{(\mathbf X)}{4pt}}(y)) 
\prec^{\scaleto{(\mathbf Y)}{4pt}}_{\Phi(uv)}
\Phi_{uv}(\varsigma_{u\to uv}^{\scaleto{(\mathbf X)}{4pt}}(x))
\overset{\ref{KATEGOR_subgroups_and_complements}}{\in}
H^{\scaleto{(\mathbf Y)}{4pt}}_{\Phi(uv)}$,
that is,
$
\varsigma_{\Phi(v)\to\Phi(uv)}^{\scaleto{(\mathbf Y)}{4pt}}(\Phi_{u}(y))
\prec^{\scaleto{(\mathbf Y)}{4pt}}_{\Phi(uv)}
\varsigma_{\Phi(u)\to\Phi(uv)}^{\scaleto{(\mathbf Y)}{4pt}}(\Phi_{u}(x))
\in
H^{\scaleto{(\mathbf Y)}{4pt}}_{\Phi(uv)}$
by \ref{KATEGOR_commutes},
and hence by \ref{KATEGOR_injective} for $\Psi$,
$
(\Psi_{\Phi(uv)}\circ\Phi_{uv})(\varsigma_{v\to uv}^{\scaleto{(\mathbf X)}{4pt}}(y)) 
\prec^{\scaleto{(\mathbf Y)}{4pt}}_{(\Psi\circ\Phi)(uv)}
(\Psi_{\Phi(uv)}\circ\Phi_{uv})(\varsigma_{u\to uv}^{\scaleto{(\mathbf X)}{4pt}}(x))
$.
\\
It is obvious to verify that the identity mapping on $G^{\scaleto{(\mathbf X)}{4pt}}$ 
satisfies \ref{KATEGOR_embedding}--\ref{KATEGOR_injective}, so it is a bunch homomorphism.
Composition of functions is associative, hence so is the composition of bunch homomorphisms, cf.\,(\ref{KATEGOR_}), which also makes it immediate that the identity mapping is an identity morphism.
\end{proof}

\begin{theorem}\label{MainCATtheo}
The functor $\Upsilon : \mathcal I^{\mathfrak c}_{\mathfrak 0\mathfrak 1} \to \mathcal B_\mathcal G$
from the category  of odd or even involutive FL$_e$-chains with FL$_e$-algebra homomorphisms
to the category of bunches of layer groups with bunch homomorphisms
given by
$\Upsilon\mathbf X=\pazocal G_{\mathbf X}$
(see Theorem~\ref{KATEGOR_BUNCH_X}/\ref{KATEGOR_errefere})
and
$\Upsilon\varphi=\Phi$
(see (\ref{KATEGOR_EQmegszoRITOm}))
defines a categorical equivalence,
its inverse functor $\Upsilon^{-1}$ being 
$\Upsilon^{-1}\pazocal G=\mathbf X_{\pazocal G}$
(see Theorem~\ref{KATEGOR_BUNCH_X}/\ref{KATEGOR_arrafere})
and
$\Upsilon^{-1}\Phi=\varphi$
(see (\ref{KATEGOR_ViSSZaEpulEredeti})).
\qed
\end{theorem}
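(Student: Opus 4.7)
The plan is to verify, in turn, that $\Upsilon$ is a functor, $\Upsilon^{-1}$ is a functor, $\Upsilon\circ\Upsilon^{-1}=\mathrm{Id}_{\mathcal B_\mathcal G}$, and $\Upsilon^{-1}\circ\Upsilon$ is naturally isomorphic to $\mathrm{Id}_{\mathcal I^{\mathfrak c}_{\mathfrak 0\mathfrak 1}}$. Well-definedness of both assignments on objects is furnished by Theorem~\ref{KATEGOR_BUNCH_X}/\ref{KATEGOR_errefere} and Theorem~\ref{KATEGOR_BUNCH_X}/\ref{KATEGOR_arrafere}, and on morphisms by parts (1) and (2) of Lemma~\ref{KATEGOR_HogyanLatszik}. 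What remains is preservation of composition and identity under each functor, plus production of the two natural comparisons.

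Functoriality of $\Upsilon$ is immediate from the formula $\Upsilon\varphi=\varphi|_{G^{\scaleto{(\mathbf X)}{4pt}}}$: the restriction of the identity on $\mathbf X$ is the identity on $\pazocal G_\mathbf X$, and a composite of FL$_e$-homomorphisms restricts to the composite of their restrictions, because (as established inside Lemma~\ref{KATEGOR_HogyanLatszik}(1)) an FL$_e$-homomorphism sends $G^{\scaleto{(\mathbf X)}{4pt}}$ into $G^{\scaleto{(\mathbf Y)}{4pt}}$. Functoriality of $\Upsilon^{-1}$ is more delicate. Identity preservation follows directly from the three-clause formula (\ref{KATEGOR_ViSSZaEpulEredeti}), since $\mathrm{id}$ leaves each layer group and each $\accentset{\bullet}H^{\scaleto{(\mathbf X)}{4pt}}_u$ fixed (the third clause applies because $\mathrm{id}$ sends $\kappa_I^{\scaleto{(\mathbf X)}{4pt}}$ to $\kappa_I^{\scaleto{(\mathbf X)}{4pt}}$). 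For composition preservation, consider bunch homomorphisms $\Phi\colon\pazocal X\to\pazocal Y$ and $\Psi\colon\pazocal Y\to\pazocal Z$ and compare $\Upsilon^{-1}(\Psi\circ\Phi)$ with $\Upsilon^{-1}\Psi\circ\Upsilon^{-1}\Phi$; both are FL$_e$-homomorphisms $\mathbf X_\pazocal X\to\mathbf X_\pazocal Z$ that agree on $G^{\scaleto{(\mathbf X)}{4pt}}$ by the first clause of (\ref{KATEGOR_ViSSZaEpulEredeti}), so it suffices to check agreement on each $\accentset{\bullet}H^{\scaleto{(\mathbf X)}{4pt}}_u$. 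That check splits into cases according to the partition types of $\Phi(u)$ and of $(\Psi\circ\Phi)(u)$; the incoherent combination $\Phi(u)\in\kappa_o^{\scaleto{(\mathbf Y)}{4pt}}$ with $(\Psi\circ\Phi)(u)\in\kappa_I^{\scaleto{(\mathbf Z)}{4pt}}$ is ruled out by \ref{KATEGOR_particio}, leaving the coherent sub-cases, each dispatched by direct substitution into (\ref{KATEGOR_ViSSZaEpulEredeti}).

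The equivalence itself splits cleanly into one strict identity and one natural isomorphism. By Theorem~\ref{KATEGOR_BUNCH_X}(C) together with Remark~\ref{ModifiCaTO}, $\pazocal G_{\mathbf X_\pazocal G}=\pazocal G$ on objects, while Lemma~\ref{KATEGOR_HogyanLatszik}(3) yields $\Upsilon\Upsilon^{-1}\Phi=\Phi$ on morphisms, so $\Upsilon\circ\Upsilon^{-1}=\mathrm{Id}_{\mathcal B_\mathcal G}$ strictly. For $\Upsilon^{-1}\circ\Upsilon$ the component $\eta_\mathbf X\colon\mathbf X\to\mathbf X_{\pazocal G_\mathbf X}$ is taken to be the isomorphism supplied by Theorem~\ref{KATEGOR_BUNCH_X}(C); naturality amounts to the square $\eta_\mathbf Y\circ\varphi=(\Upsilon^{-1}\Upsilon\varphi)\circ\eta_\mathbf X$, which I would verify elementwise: on $G^{\scaleto{(\mathbf X)}{4pt}}$ both sides coincide with $\varphi$ because $\eta_\mathbf X$ acts as the identity there and $\Upsilon^{-1}\Upsilon\varphi$ extends $\varphi|_{G^{\scaleto{(\mathbf X)}{4pt}}}$, while on $\accentset{\bullet}H^{\scaleto{(\mathbf X)}{4pt}}_u$ one unfolds (\ref{KATEGOR_ViSSZaEpulEredeti}) to obtain the required coincidence in each of the cases $\varphi(u)\in\kappa_o^{\scaleto{(\mathbf Y)}{4pt}}$ and $\varphi(u)\in\kappa_I^{\scaleto{(\mathbf Y)}{4pt}}$.

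The main obstacle is the composition-preservation check inside the functoriality of $\Upsilon^{-1}$: the three-way branching of (\ref{KATEGOR_ViSSZaEpulEredeti}) must be tracked through two consecutive extensions and reconciled with the partition-respecting constraint \ref{KATEGOR_particio}, and the analogous reconciliation must then be repeated in a slightly simpler form during the naturality verification for $\eta$. Everything else reduces either to a restatement of already-proved assertions from Theorem~\ref{KATEGOR_BUNCH_X} and Lemma~\ref{KATEGOR_HogyanLatszik} or to a routine elementwise computation on $\accentset{\bullet}H^{\scaleto{(\mathbf X)}{4pt}}_u$.
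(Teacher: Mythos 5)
Your proof is correct, but it follows a genuinely different route from the paper's. The paper proves the theorem in one stroke via the standard criterion that a functor is an equivalence if and only if it is fully faithful and essentially surjective: full faithfulness is precisely the bijection of hom-sets established in part (3) of Lemma~\ref{KATEGOR_HogyanLatszik}, and essential surjectivity is precisely $\pazocal G_{({\mathbf X}_{\pazocal G})}=\pazocal G$ from Theorem~\ref{KATEGOR_BUNCH_X}(C); no quasi-inverse has to be exhibited as a functor and no naturality square has to be checked. You instead verify directly that $\Upsilon^{-1}$ is a functor and that the two composites are the identity, respectively naturally isomorphic to it. Your route costs the two extra verifications you rightly single out as the crux (composition preservation for $\Upsilon^{-1}$ and naturality of $\eta$), but it buys a proof that literally certifies the \emph{named} $\Upsilon^{-1}$ as a quasi-inverse, whereas the fully-faithful-plus-essentially-surjective criterion only yields the existence of one abstractly. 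Two economies are available to you: for composition preservation you need not run the case analysis on the sets $\accentset{\bullet}H^{\scaleto{(\mathbf X)}{4pt}}_u$ at all, since both candidates are FL$_e$-homomorphisms restricting to $\Psi\circ\Phi$ on $G^{\scaleto{(\mathbf X)}{4pt}}$, and the uniqueness-of-extension argument in part (3) of Lemma~\ref{KATEGOR_HogyanLatszik} forces them to coincide; the same uniqueness disposes of the naturality square, because both legs are homomorphisms extending $\varphi|_{G^{\scaleto{(\mathbf X)}{4pt}}}$ once $\eta_{\mathbf X}$ is taken to be the identity on $G^{\scaleto{(\mathbf X)}{4pt}}$ under the identification of Remark~\ref{ModifiCaTO}. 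One citation is slightly off but harmless: the equality $\pazocal G_{({\mathbf X}_{\pazocal G})}=\pazocal G$ is asserted outright in Theorem~\ref{KATEGOR_BUNCH_X}(C) and does not need Remark~\ref{ModifiCaTO}, which concerns the opposite composite.
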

\begin{proof}
A functor $F : C \to D$ is known to yield an equivalence of categories if and only if it is simultaneously fully faithful, i.e., for any two objects $c_1$ and $c_2$ of $C$, the map $Hom_C(c_1,c_2)\to Hom_D(Fc_1,Fc_2)$ induced by $F$ is bijective; and
essentially surjective (dense), i.e. each object $d$ in $D$ is isomorphic to an object of the form $Fc$, for $c$ in $C$.
$\Upsilon$ satisfies these by 
Theorem~\ref{KATEGOR_BUNCH_X} and Lemma~\ref{KATEGOR_HogyanLatszik}.
\end{proof}

\begin{remark}\label{ISOis}
In fact, the functor $\Upsilon$ exhibits a categorical isomorphism between $\mathcal I^{\mathfrak c}_{\mathfrak 0\mathfrak 1}$ and $\mathcal B_\mathcal G$ if in the definition of the latter the objects are modified in accordance with Remark~\ref{ModifiCaTO}.
\end{remark}

\begin{remark}
By Theorem~\ref{KATEGOR_BUNCH_X}/\ref{KATEGOR_arrafere}, odd (resp. even) 
involutive FL$_e$-chains correspond bunches of layer groups where $t\in\kappa_0$ (resp. $t\notin\kappa_0$).
By \cite[Example 8.2]{JenRepr2020}, 
even Sugihara chains correspond to bunches of layer groups of the form 
$\langle \mathbbm 1_u, \mathbbm 1_u, \varsigma_{u\to v} \rangle_{\langle\emptyset,\emptyset,\kappa,\leq_\kappa\rangle}$,
and
odd Sugihara chains correspond bunches of layer groups of the form
$\langle \mathbbm 1_u, \mathbbm 1_u, \varsigma_{u\to v} \rangle_{\langle\{t\},\emptyset,\kappa\setminus\{t\},\leq_\kappa\rangle}$,
where $\mathbbm 1$ denotes the trivial (one-element) group.
By restricting the objects of
$\mathcal I^{\mathfrak c}_{\mathfrak 0\mathfrak 1}$
and
$\mathcal B_{\mathcal G}$ 
to these classes, respectively,
as a corollary of Theorem~\ref{MainCATtheo}
we obtain categorical equivalences between the above described subcategories of
$\mathcal I^{\mathfrak c}_{\mathfrak 0\mathfrak 1}$
and
$\mathcal B_{\mathcal G}$, respectively.
\end{remark}


\section*{Acknowledgement} We thank the anonymous reviewer for the careful reading of our manuscript and the many suggestions to improve the presentation of it.

\end{document}